\newtheorem{thm}{Theorem}[section]
\newtheorem{prp}[thm]{Proposition}
\theoremstyle{definition}
\newtheorem{dfn}[thm]{Definition}
\theoremstyle{remark}
\newtheorem{rmk}[thm]{Remark}
\newcounter{saveenumi}
\numberwithin{equation}{section}
\def\lra{\longrightarrow}
\def\xlra#1{\xrightarrow{#1}}
\def\BE#1{\begin{equation}\label{#1}}
\def\EE{\end{equation}}
\def\lr#1{\langle#1\rangle}
\def\blr#1{\big\langle#1\big\rangle}
\def\wt#1{\widetilde{#1}}
\def\wh#1{\widehat{#1}}
\def\ov#1{\overline{#1}}
\def\eref#1{(\ref{#1})}
\def\tn#1{\textnormal{#1}}
\def\sf#1{\textsf{#1}}
\def\sm#1{\begin{small}#1\end{small}}
\def\De{\Delta}
\def\La{\Lambda}
\def\Om{\Omega}
\def\Si{\Sigma}
\def\ga{\gamma}
\def\io{\iota}
\def\la{\lambda}
\def\om{\omega}
\def\si{\sigma}
\def\th{\theta}
\def\ve{\varepsilon}
\def\vph{\varphi}
\def\vp{\varpi}
\def\C{\mathbb C}
\def\cC{\mathcal C}  
\def\D{\mathbb D}
\def\fD{\mathfrak D}
\def\bE{\mathbb E}
\def\nd{\tn{d}}
\def\cF{\mathcal F}
\def\fI{\mathfrak i}
\def\cI{\mathcal I}
\def\cJ{\mathcal J}
\def\cL{\mathcal L}
\def\cM{\mathcal M}
\def\fM{\mathfrak M}
\def\cN{\mathcal N}
\def\cO{\mathcal O}
\def\fO{\mathfrak O}
\def\P{\mathbb P}
\def\cP{\mathcal P}
\def\R{\mathbb{R}}
\def\fR{\mathfrak R}
\def\bS{\mathbb S}
\def\cU{\mathcal U}
\def\cW{\mathcal W}
\def\Z{\mathbb{Z}}
\def\Q{\mathbb Q}
\def\fd{\mathfrak d}
\def\u{\mathbf u}
\def\fc{\mathfrak c}
\def\ft{\mathfrak t}
\def\fj{\mathfrak j}
\def\ff{\mathfrak f}
\def\fo{\mathfrak o}
\def\fs{\mathfrak s}
\def\ev{\tn{ev}}
\def\FS{\tn{FS}}
\def\id{\tn{id}}
\def\PD{\tn{PD}}
\def\pt{\tn{pt}}
\def\rk{\tn{rk}}
\def\top{\tn{top}}
\def\vir{\tn{vir}}
\def\hb{\hbar}
\def\dbar{\bar\partial}
\def\prt{\partial}
\def\eset{\emptyset}
\def\i{\infty}
\def\bu{\bullet}
\def\0{\mathbf 0}
\begin{document}

\title{Algebraic Properties of Real Gromov-Witten Invariants}
\author{Penka Georgieva\thanks{Partially supported by ERC Grant ROGW-864919} 
$~$and 
Aleksey Zinger\thanks{Partially supported by NSF grant DMS 2301493}}
\date{\today}
\maketitle

\begin{abstract}
\noindent
We describe properties of the previously constructed all-genus real Gromov-Witten theory
in the style of Kontsevich-Manin's axioms and other classical equations
and reconstruction results of complex Gromov-Witten theory.
\end{abstract}

\tableofcontents

\section{Introduction}
\label{intro_sec}

\noindent
The theory of $J$-holomorphic maps plays a prominent role in symplectic topology,
algebraic geometry, and string theory.
Kontsevich-Manin's axioms~\cite{KM} governing the associated numerical invariants of 
symplectic manifolds, known as \sf{Gromov-Witten} (or \sf{GW-}) \sf{invariants},
have proved instrumental for studying these invariants via algebraic methods.
For example, they underline the notion of Cohomological Field Theory (or~\sf{CohFT}).
Givental's discovery of symplectic actions on CohFTs \cite{Giv1,Giv2} and 
Teleman's subsequent classification of semi-simple CohFTs \cite{Teleman}
led to many reconstruction results, Virasoro constraints, and mirror symmetry 
as well as strong connections to integrable systems~\cite{DZ1,DZ2}.
On the other hand, the progress in the theory of real $J$-holomorphic maps, 
i.e.~of counts of $J$-holomorphic curves in symplectic manifolds 
preserved by anti-symplectic involutions, has been much slower.
In particular,  the associated numerical invariants of real symplectic manifolds, 
known as \sf{real GW-invariants}, did not even exist in positive genera until~\cite{RealGWsI};
a detailed summary of the construction of these invariants in~\cite{RealGWsI}
appears in~\cite{RealGWsSumm}.
The present paper provides analogues of Kontsevich-Manin's axioms and 
other standard algebraic properties of complex GW-invariants
for the real GW-invariants of~\cite{RealGWsI}.
It is intended to serve as a base for establishing analogues of Givental's and Teleman's results 
and their wide-ranging implications in real GW-theory.\\

\noindent
A \sf{real symplectic manifold} is a triple $(X,\om,\phi)$ consisting 
of a symplectic manifold~$(X,\om)$ and an anti-symplectic involution~$\phi$.
The fixed locus~$X^{\phi}$ of~$\phi$ is then a Lagrangian submanifold of~$(X,\om)$.
A \sf{real bundle pair} $(V,\vph)\!\lra\!(X,\phi)$ consists of a complex vector bundle 
$V\!\lra\!X$ and a conjugation~$\vph$ on $V$ lifting~$\phi$,
i.e.~$\vph^2\!=\!\id_V$ and
$\vph\!:V_x\!\lra\!\ov{V}_{\phi(x)}$ is a $\C$-linear isomorphism for every $x\!\in\!X$.
For a real bundle pair $(V,\vph)\!\lra\!(X,\phi)$, we  denote~by
$$\La_{\C}^{\top}(V,\vph)=\big(\La_{\C}^{\top}V,\La_{\C}^{\top}\vph\big)$$
the top exterior power of $V$ over $\C$ with the induced conjugation.
Direct sums, duals, and tensor products over~$\C$ of real bundle pairs over~$(X,\phi)$
are again real bundle pairs over~$(X,\phi)$. 
For any complex vector bundle $L\!\lra\!X$, 
the homomorphisms 
\begin{alignat*}{2}
\phi_L^{\oplus}\!:L\!\oplus\!\phi^*\ov{L}&\lra L\!\oplus\!\phi^*\ov{L}, 
&\quad\phi_L^{\oplus}(v,w)&=(w,v),  \qquad\hbox{and}\\
\phi_L^{\otimes}\!:L\!\otimes_{\C}\!\phi^*\ov{L}&\lra L\!\otimes_{\C}\!\phi^*\ov{L},
&\quad\phi_L^{\otimes}(v\!\otimes\!w)&=w\!\otimes\!v,
\end{alignat*}
are conjugations covering~$\phi$.

\begin{dfn}\label{realorient_dfn}
A \sf{real orientation} $(L,[\psi],\fs)$ on a real symplectic manifold~$(X,\om,\phi)$ consists~of 
\begin{enumerate}[label=(RO\arabic*),leftmargin=*]

\item\label{LBP_it} a complex line bundle $L\!\lra\!X$ such that 
\BE{realorient_e} w_2(TX^{\phi})=w_2(L)|_{X^{\phi}} \qquad\hbox{and}\qquad
\La_{\C}^{\top}(TX,\nd\phi)\approx\big(L\!\otimes_{\C}\!\phi^*\ov{L},\phi_L^{\otimes}\big),\EE

\item\label{isom_it} a homotopy class~$[\psi]$ of isomorphisms 
of real bundle pairs in~\eref{realorient_e}, and

\item\label{spin_it} a spin structure~$\fs$ on the real vector bundle
$TX^{\phi}\!\oplus\!L^*|_{X^{\phi}}$ over~$X^{\phi}$
compatible with the orientation induced by~$[\psi]$.

\end{enumerate}
\end{dfn}

\vspace{.1in}

\noindent
By~\ref{isom_it}, a real orientation on $(X,\om,\phi)$ determines a homotopy class of isomorphisms
$$\La_{\R}^{\top}\big(TX^{\phi}\big)
\!=\!\La_{\R}^{\top}
\big(\!\big\{\dot{x}\!\in\!TX|_{X^{\phi}}\!:\nd\phi(\dot{x})\!=\!\dot{x}\big\}\!\big)
\approx \big\{rv\!\otimes_{\C}\!v\!\in\!L\!\otimes_{\C}\!\phi^*\ov{L}\!:
v\!\in\!L|_{X^{\phi}},\,r\!\in\!\R\big\}$$
and thus an orientation on~$X^{\phi}$.
In particular, the real vector bundle $TX^{\phi}\!\oplus\!L^*|_{X^{\phi}}$ over~$X^{\phi}$
in~\ref{spin_it} is oriented.
By the first condition in~\eref{realorient_e}, it admits a spin structure.
By \cite[Theorems~1.1,1.2]{RBP}, a real orientation on $(X,\om,\phi)$ also determines 
a homotopy class of isomorphisms
\BE{RBPisom_e}u^*\big(TX\!\oplus\!(L^*\!\oplus\!\phi^*\ov{L}^*),\nd\phi\!\oplus\!\phi_{L^*}^{\oplus}\big)
\approx \big(\Si\!\times\!\C^{n+2},\si\!\times\!\fc\big)\EE
over a symmetric, possibly nodal, surface $(\Si,\si)$ for every real map 
\hbox{$u\!:(\Si,\si)\!\lra\!(X,\phi)$}, where $n$ is half the real dimension of~$X$ 
and $\fc$ is the standard conjugation on~$\C^{n+2}$.
This homotopy class of isomorphisms is one of the key ingredients 
in the construction of real GW-invariants in~\cite{RealGWsI}.

\begin{thm}\label{main_thm}
Let $(X,\om,\phi)$ be a compact connected real symplectic manifold of dimension $2n$ with $n\!\not\in\!2\Z$
and $(L,[\psi],\fs)$ be a real orientation on~$(X,\om,\phi)$.
The associated real GW-invariants~$\lr{\ldots}_{\!g,B}^{\om,\phi}$ in~\eref{RGWdfn_e} 
and the linear maps~$\cI_{g,\ell,B}^{\om,\phi}$ in~\eref{RIdfn_e} are well-defined
and satisfy the properties~\ref{Reff1_it}-\ref{RMotivic_it} in 
Sections~\ref{RGWs_subs1} and~\ref{RGWs_subs2}
as well as Theorems~\ref{Rrecont_thm1} and~\ref{RTRR_thm} in Section~\ref{RRecon_subs}.
\end{thm}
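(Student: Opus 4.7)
The statement is a portmanteau assertion that bundles together well-definedness of the invariants with a long list of Kontsevich-Manin style axioms, so the plan is to treat it as a road map for the remainder of the paper rather than as a single self-contained argument. The strategy is to first reduce the well-definedness claim to the construction of~\cite{RealGWsI}, and then to verify each property in its proper section, exploiting the real orientation $(L,[\psi],\fs)$ at exactly those points where the complex case would only need an orientation of the Deligne-Mumford moduli space.

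First, for the well-definedness of $\lr{\ldots}^{\om,\phi}_{g,B}$ and $\cI^{\om,\phi}_{g,\ell,B}$, the plan is to invoke the construction of~\cite{RealGWsI} directly: the homotopy class of isomorphisms~\eref{RBPisom_e} determined by the real orientation is precisely what is needed to orient the moduli space of real stable maps and hence to make the virtual count independent of auxiliary choices. I would use this to show that the pairing in~\eref{RGWdfn_e} and the map in~\eref{RIdfn_e} depend only on $(X,\om,\phi)$ and the homotopy class of $(L,[\psi],\fs)$. The parity hypothesis $n\!\not\in\!2\Z$ enters here because the moduli space of real maps from an unoriented Klein surface is only orientable under this condition (and this is exactly where~\cite{RealGWsI} applies).

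For the list of properties, the plan is to verify them in order of increasing difficulty. The effectivity, degree, grading, and $\Si_\ell$-equivariance type axioms in \ref{Reff1_it}--\ref{RMotivic_it} are essentially formal consequences of the construction and of functoriality of the virtual class under relabeling of marked points; these I would dispatch first by comparing insertions and dimension counts. The forgetful-map axioms (string, dilaton, divisor) require studying the universal curve over the real moduli space and tracking how the real orientation behaves under pulling back by the universal map; these reduce to statements about one-dimensional fibers that mirror the complex case but now require extra care with orientations on boundary real loci.

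The main obstacle, which I would leave for last, is the reconstruction and topological recursion relation of Theorems~\ref{Rrecont_thm1} and~\ref{RTRR_thm}. These rely on a splitting-style formula describing the contribution of the boundary strata in the real Deligne-Mumford space. In the real setting the boundary is richer than in the complex case because smoothing a pair of conjugate nodes and smoothing a real node contribute differently; the hard step is to show that the real orientation propagates coherently across both kinds of smoothings, giving matching signs between the left and right sides of the TRR. I expect this to reduce, via the index-theoretic machinery of~\cite{RBP} and~\cite{RealGWsI}, to a comparison of two homotopy classes of isomorphisms of determinant lines over the normalization of a nodal real curve, and this compatibility is exactly what the spin structure~$\fs$ is designed to record. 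Once that compatibility is established, the splitting formula, and with it the reconstruction results, follows by the same formal manipulations as in the classical complex case.
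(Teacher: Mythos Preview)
Your high-level plan is broadly in the right spirit, but it misidentifies both where the real work lies and how two of the key results are actually proved.

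First, you treat the Splitting/Genus Reduction axioms \ref{RGenusRed_it} and \ref{RSplit_it} only implicitly, as a lemma toward Theorems~\ref{Rrecont_thm1} and~\ref{RTRR_thm}. In the paper these are the main technical obstacle: all of Section~\ref{GenRedSpl_sec} is devoted to them, and the proofs rest on a catalogue of orientation-comparison results imported from~\cite{RealGWsGeom} (Propositions~3.12--3.16 there), which compare the orientation on a boundary stratum induced by the real orientation via~\eref{SubIsom_e} with the one induced via the normal-bundle isomorphism~\eref{CompOrient_e0}. The sign discrepancies~\eref{sgnwtio_e} are exactly what produce the exponents $\ve_n(P,\mu)$, $\ve(\wt{P},\mu)$, etc., in~\ref{RSplit_it}. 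Your description (``reduce via~\cite{RBP} and~\cite{RealGWsI} to a comparison of determinant lines'') is in the right direction but underestimates how much structural work has been packaged into~\cite{RealGWsGeom}; you would need to replicate those six comparison propositions.

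Second, your claim that Theorem~\ref{RTRR_thm} ``relies on a splitting-style formula'' is wrong. The paper proves it by a direct geometric argument, constructing a meromorphic section of~$L^{\otimes d}$ (the $d$-th power of the universal tangent line at~$z_i^+$) over~$\ov\fM_{0,\ell}^{\phi}(B;J)$ and reading off~\eref{RTRR_e} from its zero/pole locus, exactly parallel to Proposition~\ref{CTRR_prp}. The real-specific content is showing that two extra loci, $\R D_{B_0,B'i}^-(M)$ and $\R E_i^-(M)$, contribute zero; this uses local smoothing models, not the splitting axiom.

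Third, your plan for Theorem~\ref{Rrecont_thm1} is missing its essential input: the proof that $H^*(\R\ov\cM_{0,\ell};\Q)$ is generated by the boundary classes (images of $\io_{\wt{P}}^{\C}$ and $\io_{0,(I,J)}^E$), which is~\cite[Theorem~2.2]{RDMhomol}. Without this, you cannot reduce an arbitrary class~$\cI_{0,\ell,B}^{\om,\phi}(\mu)$ to its codimension-$0$ part. Once you have that input and the Splitting axiom~\ref{RSplit_it}, the reconstruction is indeed a short formal induction, as you anticipate.

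Finally, you do not mention \ref{RMapPt_it} (Mapping to Point), which has a nontrivial proof in the paper involving the obstruction bundle $\pi_1^*TX^{\phi}\!\otimes\!\pi_2^*(\bE^{\R})^*$ and a case analysis in $n$ and~$g$.
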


\noindent
A collection of homomorphisms~$\cI_{g,\ell,B}^{\om}$ as in~\eref{CIdfn_e}
which satisfies \ref{Ceff2_it}-\ref{CfcII_it}, \ref{CdivI_it} with $\ga\!=\!1$, 
and \ref{CMapPt_it}-\ref{CMotivic_it} in Section~\ref{CGWs_subs} is called 
a \sf{system of GW-classes} for~$(X,\om)$ in \cite[Definition~2.2]{KM};
a collection~$\{\cI_{0,\ell,B}^{\om}\}$ satisfying the associated properties is 
called a \sf{tree-level system of GW-classes} for~$(X,\om)$ in~\cite{KM}.
In light of the terminology used in the string theory literature, 
such as \cite{Walcher,Adam}, it would thus be appropriate to call 
a collection of homomorphisms~$\cI_{g,\ell,B}^{\om,\phi}$ as in~\eref{RIdfn_e}
which satisfies \ref{Reff2_it}-\ref{RfcII_it}, \ref{RdivI_it} with $\ga\!=\!1$, 
and \ref{RMapPt_it}-\ref{RMotivic_it} in Section~\ref{RGWs_subs2}  
a \sf{$\phi$-extension} of the system~$\{\cI_{g,\ell,B}^{\om}\}$ of GW-classes for~$(X,\om)$
and a collection~$\cI_{0,\ell,B}^{\om,\phi}$ satisfying the associated properties
a \sf{$\phi$-extension} of the tree-level system~$\{\cI_{0,\ell,B}^{\om}\}$ of GW-classes 
for~$(X,\om)$.
Theorem~\ref{Rrecont_thm1}, which is a real analogue of a now classical result of~\cite{KM}, 
describes the structure of the latter extension.
Similarly to the complex case of~\cite{KM} depending on the structure of 
the cohomology ring of the Deligne-Mumford moduli space~$\ov\cM_{0,\ell}$
of rational nodal $\ell$-marked curves,
the proof of Theorem~\ref{Rrecont_thm1} depends on the structure of 
the cohomology ring of the Deligne-Mumford moduli space~$\R\ov\cM_{0,\ell}$
of real rational nodal curves with $\ell$~conjugate pairs of marked points.
This was only recently obtained in~\cite{RDMhomol,RDMbl}.\\

\noindent
The anti-holomorphic involutions 
\begin{equation*}\begin{aligned}
\tau_n\!:\P^{n-1}&\lra\P^{n-1}, &
\tau_n\big([Z_1,\ldots,Z_n]\big)&=\begin{cases}
[\ov{Z}_2,\ov{Z}_1,\ldots,\ov{Z}_n,\ov{Z}_{n-1}],&\hbox{if}~n\!\in\!2\Z;\\
[\ov{Z}_2,\ov{Z}_1,\ldots,\ov{Z}_{n-1},\ov{Z}_{n-2},\ov{Z}_n],&\hbox{if}~n\!\not\in\!2\Z;
\end{cases}\\
\eta_{2m}\!: \P^{2m-1}& \lra\P^{2m-1},&
\eta_{2m}\big([Z_1,\ldots,Z_{2m}]\big)&= 
\big[\ov{Z}_2,-\ov{Z}_1,\ldots,\ov{Z}_{2m},-\ov{Z}_{2m-1}\big],
\end{aligned}\end{equation*}
are anti-symplectic with respect to the Fubini-Study symplectic
forms~$\om_n$ and~$\om_{2m}$ on the complex projective spaces~$\P^{n-1}$ 
and~$\P^{2m-1}$, respectively.
The real symplectic manifolds $(\P^{n-1},\om_n,\tau_n)$ with $n\!\in\!2\Z$ and 
$(\P^{2m-1},\om_{2m},\eta_{2m})$ admit real orientations~$(L,[\psi],\fs)$ 
with \hbox{$L\!=\!\cO_{\P^{n-1}}(n/2)$} and \hbox{$L\!=\!\cO_{\P^{2m-1}}(m)$}, respectively.
Many other examples of compact real symplectic manifolds with real orientations
are described in \cite[Section~1.1]{RealGWsIII}.
These include many projective complete intersections,
such as the real quintic threefolds,
i.e.~the smooth hypersurfaces in~$\P^4$ cut out by degree~5 homogeneous polynomials
on~$\C^5$ with real coefficients;
they play a prominent role in the interactions of symplectic topology 
with string theory and algebraic geometry.

\vspace{.1in}

\begin{rmk}\label{realorient_rmk}
The notion of real orientation provided by \cite[Definition~1.2]{RealGWsI} 
and used in~\cite{RealGWsI} to orient moduli spaces of stable real maps
replaces $(L\!\otimes_{\C}\!\phi^*\ov{L},\phi_L^{\otimes})$ and 
$(L^*\!\oplus\!\phi^*\ov{L}^*,\phi_{L^*}^{\oplus})$ in~\eref{realorient_e} and~\eref{RBPisom_e}
by $(L,\wt\phi)^{\otimes2}$ and $2(L,\wt\phi)^*$, respectively, 
for a real line bundle pair~$(L,\wt\phi)$ over~$(X,\phi)$.
For such a pair, the homomorphism
\BE{GIgen_e} \Phi_L\!:2(L,\wt\phi)\lra\big(L\!\oplus\!\phi^*\ov{L},\phi_L^{\oplus}\big),
\quad \Phi_L(v,w)=\big(v\!+\!\fI w,\wt\phi(v\!-\!\fI w)\!\big),\EE
is an isomorphism of real bundle pairs over~$(X,\phi)$.
Definition~\ref{realorient_dfn}, which is a slightly reworded version of \cite[Definition~A.1]{GI},
thus weakens (broadens) the notion of real orientation of \cite[Definition~1.2]{RealGWsI}.
The compositions of the isomorphisms \hbox{$u^*(\id_{TX}\!\oplus\!\Phi_{L^*})$}
and~\eref{RBPisom_e} yield a homotopy class of isomorphisms
\BE{RBPisom_e3}u^*\big(TX\!\oplus\!2L^*,\nd\phi\!\oplus\!2\wt\phi^*\big)
\approx \big(\Si\!\times\!\C^{n+2},\si\!\times\!\fc\big)\EE
over a symmetric $(\Si,\si)$ for every real map \hbox{$u\!:(\Si,\si)\!\lra\!(X,\phi)$}.
As noted in~\cite{GI} and detailed in~\cite{RealGWsGeom}, 
the construction of orientations of moduli spaces in~\cite{RealGWsI}
goes through almost verbatim with the weaker notion of real orientation 
of Definition~\ref{realorient_dfn}.
\end{rmk}

\vspace{.1in}

\noindent
By \cite[Theorem~4.5]{KM}, a tree-level system of GW-classes determines
a Frobenius structure on $H^*(X;\C)$ in the sense of \cite[Section~4.2]{KM}.
By \cite[Example~6.3]{KM}, a system of GW-classes determines a CohFT in 
the sense of \cite[Definition~6.1]{KM}.
This perspective makes it possible to recover arbitrary-genus GW-invariants 
of some symplectic manifolds, such as~$\P^n$, from the genus~0 GW-invariants;
see \cite{Giv1,Giv2}.
We hope that this perspective can be productively applied to $\phi$-extensions of 
system of GW-classes based on the results of this paper.
Okounkov-Pandharipande's trilogy \cite{OP1,OP2,OP3} determines
the (complex) Gromov-Witten theory of Riemann surfaces, 
in particular showing that the GW-invariants of~$\P^1$ satisfy integrable hierarchies of Toda type
and the GW-invariants of Riemann surfaces satisfy the Witten and Virasoro conjectures.
Guidoni~\cite{Gu} recently showed that the real GW-invariants of~$\P^1$ satisfy integrable 
hierarchies of types~CKP and~KdV. 
However, analogues of the Witten and Virasoro conjectures for real GW-invariants 
of Riemann surfaces remain unknown.
More geometric directions for future research are indicated in~\cite{RealGWsGeom}.\\ 

\noindent
After collecting the most frequently used notation and terminology in Section~\ref{nota_sec},
we recall basic properties of complex GW-invariants, Kontsevich-Manin's axioms
for these invariants, and two reconstruction results in the genus~0 complex GW-theory
in Section~\ref{CGWs_sec}.
We give a direct proof of the second reconstruction result, Proposition~\ref{CTRR_prp},
which readily adapts to the real setting.
After setting up the relevant notation in Section~\ref{RMS_subs},
we state and justify basic properties of 
the real GW-invariants arising from~\cite{RealGWsI} and analogues of Kontsevich-Manin's axioms
for the associated real GW-theory in Sections~\ref{RGWs_subs1} and~\ref{RGWs_subs2}.
Real analogues of the two reconstruction results of Section~\ref{CGWs_sec},
Theorems~\ref{Rrecont_thm1} and~\ref{RTRR_thm}, are established 
in Section~\ref{RRecon_subs}.
Section~\ref{RSplitPf_subs} contains the most technical proofs,
deducing the {\it Genus Reduction} and {\it Splitting} properties of Section~\ref{RGWs_subs2}
from the structural results for the orientations of the moduli spaces of stable real maps
established in~\cite{RealGWsGeom}.

\section{Notation and conventions}
\label{nota_sec}

\noindent
For $\ell\!\in\!\Z^{\ge0}$, we denote by~$\bS_{\ell}$ the $\ell$-th symmetric group.
For $\ell,g\!\in\!\Z^{\ge0}$, define 
\begin{gather*}
[\ell]=\{1,2,\ldots,\ell\big\}, \quad
\cP(g,\ell)=\big\{\!(g_1,g_2;I,J)\!: g_1,g_2\!\in\!\Z^{\ge0},\,g_1\!+\!g_2\!=\!g,\,
[\ell]\!=\!I\!\sqcup\!J\big\},\\
\wt\cP(g,\ell)=\big\{(g',g_0;I,J,K)\!: 
g',g_0\!\in\!\Z^{\ge0},\,2g'\!+\!g_0\!=\!g,\,[\ell]\!=\!I\!\sqcup\!J\!\sqcup\!K\big\}.
\end{gather*}
A decomposition $[\ell]\!=\!I\!\sqcup\!J$ determines a permutation 
\BE{Sperm_e}\big\{1,\ldots,\ell\big\}\lra I\!\sqcup\!J\EE
sending $1,\ldots,|I|$ to the elements of~$I$ in the increasing order
and \hbox{$|I|\!+\!1,\ldots,\ell$} to the elements of~$J$ in the increasing order.
For disjoint subsets $I,J\!\subset\!\Z^+$, let
$$\io_{I,J}\!:\big[|I\!\sqcup\!J|\big]\lra I\!\sqcup\!J$$
be the order-preserving bijection.\\

\noindent
Let $X$ be a compact connected oriented manifold as in the statement of Theorem~\ref{main_thm}. 
We denote by $H^*(X;\Q)$ the direct sum of the groups $H^p(X;\Q)$ over all $p\!\in\!\Z$.
For a homogeneous element~$\mu$ of $H^*(X;\Q)$, 
let $|\mu|$ be its cohomology degree, i.e.~$|\mu|\!=\!p$ if $\mu\!\in\!H^p(X;\Q)$.
In any equation involving an element \hbox{$\mu\!\in\!H^*(X;\Q)$} and its cohomology degree~$|\mu|$,
we implicitly assume that $\mu$ is a homogeneous element.
We fix a basis $\{e_i\}_{i\in[N]}$ of homogeneous elements for~$H^*(X;\Q)$ and
let $(g^{ij})_{i,j\in[N]}$ be the inverse of
the associated matrix 
$$\big(g_{ij}\!\equiv\!\lr{e_ie_j,X}\!\big)_{i,j\in[N]}$$
for the intersection form on~$H^*(X;\Q)$.
If $\De_X\!\in\!H_*(X^2;\Q)$ is the homology class determined by
the diagonal in~$X^2$ with the orientation induced from~$X$, then
\BE{DeXPD_e} \PD_{X^2}^{-1}\big(\De_X\big)=
\sum_{i,j=1}^N\!g_{ij}\,e_i\!\times\!e_j\EE
by~\eref{PDdfn_e}, \eref{cupcap_e}, and~\eref{FCprod_e} below.\\

\noindent
Let $\ell\!\in\!\Z^{\ge0}$, $\mu\!\equiv\!(\mu_i)_{i\in[\ell]}$ 
be a tuple of homogeneous elements of~$H^*(X;\Q)$, and 
$$|\mu|=|\mu_1|\!+\!\ldots\!+|\mu_{\ell}|.$$
For $I\!\equiv\!\{i_1,\ldots,i_m\}\!\subset\![\ell]$  with $i_1\!<\!\ldots\!<\!i_m$,
define 
$$\mu_I=\big(\mu_{i_1},\ldots,\mu_{i_m}\big).$$
For $\vp\!\in\!\bS_{\ell}$ and a partition $[\ell]\!=\!I\!\sqcup\!J$,  
let
$$\ve\big(\vp,\mu\big)
=\sum_{\begin{subarray}{c}i,j\in[\ell]\\ i<j\\ \vp(i)>\vp(j)\end{subarray}}
\hspace{-.2in} \big|\mu_i\big|\!\cdot\!\big|\mu_j\big|,
\quad 
\ve\big(\!(I,J),\mu\big)
=\sum_{\begin{subarray}{c}i\in I\\ j\in J\\ i>j\end{subarray}}|\mu_i|\!\cdot\!|\mu_j|.$$
The parity of the first (resp.~second) number above
is the parity of the permutation on the odd-degree classes~$\mu_i$
induced by the permutation~$\vp$ (resp.~permutation~\eref{Sperm_e}) of~$[\ell]$.
For $\vp\!\in\!\bS_{\ell}$, we define a graded automorphism of~$H^*(X;\Q)^{\otimes\ell}$~by
$$\vp\big(\mu_1\!\otimes\!\ldots\!\otimes\!\mu_{\ell}\big)=
(-1)^{\ve(\vp,(\mu_i)_{i\in[\ell]})}\mu_{\vp(1)}\!\otimes\!\ldots\!\otimes\!\mu_{\vp(\ell)}.$$
This determines an action of~$\bS_{\ell}$ on~$H^*(X;\Q)^{\otimes\ell}$ by 
graded automorphisms.
For 
\BE{PwtPdfn_e}P\equiv(g_1,g_2;I,J)\in\cP(g,\ell), \quad
\wt{P}\equiv\big(g',g_0;I,J,K)\in\wt\cP(g,\ell),\EE
and  $n\!\not\in\!2\Z$, we define
\begin{alignat*}{2}
 \ve_n(P)&=\frac{n\!-\!1}2(g_1\!-\!1)(g_2\!-\!1), &\qquad
\ve\big(P,\mu\big) &= \ve\big(\!(I,J),\mu\big), \\
\ve_n\big(P,\mu\big)&=\ve_n(P)\!+\!\ve\big(P,\mu\big)\!+\!(g_1\!-\!1)|\mu_J|, &\qquad
\ve\big(\wt{P},\mu\big)&=\ve\big(\!(I\!\sqcup\!J,K),\mu\big)\,.
\end{alignat*}   

\vspace{.18in}

\noindent
For a tuple $J\!\equiv\!(j_1,j_2,...)\!\in\!(\Z^{\ge0})^{\i}$
and a complex vector bundle $E\!\lra\!Y$, let
$$
c_J(E)=\prod_{k=1}^{\i}c_k^{j_k}(E)\in H^*(Y;\Q),$$
with $0^0\!\equiv\!1$.
By the Splitting Principle \cite[Problem~7C]{MiSt}, 
there exist universal coefficients \hbox{$C_{J_1,J_2}^{(n_1,n_2)}\!\in\!\Z$}
such~that 
$$e\big(E_1\!\otimes_{\C}\!E_2\big)=\sum_{J_1,J_2\in(\Z^{\ge0})^{\i}} \!\!\!\!\!\!\!\!\!\!
C_{J_1,J_2}^{(n_1,n_2)}c_{J_1}(E_1)c_{J_2}(E_2)$$
for all complex vector bundles $E_1,E_2\!\lra\!Y$ of ranks $n_1$ and $n_2$, respectively.\\

\noindent
For an orbifold~$\ov\cM$, we denote by~$H_*(\ov\cM;\Q)$ and~$H^*(\ov\cM;\Q)$ 
the homology and cohomology of the sheaf of singular chains for
the orbifold charts of~$\ov\cM$ with rational coefficients and
by~$\wt{H}_*(\ov\cM;\Q)$ and~$\wt{H}^*(\ov\cM;\Q)$ 
the homology and cohomology of this sheaf twisted by the orientation system~$\fO_{\ov\cM}$ of~$\ov\cM$,
as in~\cite[Section~3.H]{Hatcher} and \cite[Section~V.10]{Bredon}.
For a homogeneous element~$\mu$ of~$\wt{H}^*(\ov\cM;\Q)$, 
we again denote by~$|\mu|$ its cohomology degree.
We use the conventions for the cup and cap products in the singular (co)homology theory
as in~\S48 and~\S66, respectively, in~\cite{Mu2}, so~that
\BE{cupcap_e} \big(\ga\!\cup\!\mu\big)\cap A=\ga\cap\big(\mu\!\cap\!A\big)
\quad\forall~\ga\!\in\!H^*(\ov\cM;\Q),\,\mu\!\in\!\wt{H}^*(\ov\cM;\Q),
\,A\!\in\!\wt{H}_*(\ov\cM;\Q);\EE
see Theorem~66.2 in~\cite{Mu2}.
An orientation on~$\ov\cM$ identifies~$\wt{H}_*(\ov\cM;\Q)$ and~$\wt{H}^*(\ov\cM;\Q)$
with~$H_*(\ov\cM;\Q)$ and $H^*(\ov\cM;\Q$), respectively, 
intertwining the cup and cap products in the two (co)homology theories.\\

\noindent
If in addition $\ov\cM$ is compact  and either oriented or connected and unorientable,
let \hbox{$[\ov\cM]\!\in\!\wt{H}_*(\ov\cM;\Q)$} be its \sf{twisted fundamental class};
it corresponds to the fundamental class of~$\ov\cM$ in the first case and
to the fundamental class of the orientation double cover of~$\ov\cM$ with its canonical orientation  
in the second case.
In either case, the homomorphisms
\BE{PDdfn_e}\begin{split} 
\PD_{\ov\cM}\!:\wt{H}^*(\ov\cM;\Q)&\lra H_*(\ov\cM;\Q),\\
\PD_{\ov\cM}\!:H^*(\ov\cM;\Q)&\lra \wt{H}_*(\ov\cM;\Q),
\end{split} \qquad \PD_{\ov\cM}(\ga)= \ga\!\cap\![\ov\cM],\EE
are isomorphisms; 
see Theorem~3H.6 in~\cite{Hatcher} and Theorem~9.3 and Corollary~10.2 in~\cite{Bredon}.
In such cases, we also define
\BE{intdfn_e}\int_{\ov\cM}\!:\wt{H}^*(\ov\cM;\Q)\lra\Q, \quad
\int_{\ov\cM}\!\ga=\blr{\ga,[\ov\cM]}
=\big|\ga\!\cap\![\ov\cM]\big|^{\pm},\EE
where $|\cdot|^{\pm}$ is the degree~of 
(the weighted cardinality of the points~in) the $H_0$-part of~$\cdot$.
If~$\ov\cM_1$ and~$\ov\cM_2$ are compact connected orbifolds,
$\ov\cM_1$ is oriented, and~$\ov\cM_2$ is either oriented or unorientable,
then
\BE{FCprod_e} 
\int_{\ov\cM_1\times\ov\cM_2}\!\!\!\ga_1\!\times\!\ga_2
=\bigg(\int_{\ov\cM_1}\!\!\ga_1\!\!\bigg)\!\bigg(\int_{\ov\cM_2}\!\!\ga_2\!\!\bigg)
\quad\forall~\ga_1\!\in\!H^*(\ov\cM_1;\Q),\,\ga_2\!\in\!\wt{H}^*(\ov\cM_2;\Q).\EE

\vspace{.18in}

\noindent
Let $\ff\!:\ov\cM'\!\lra\!\ov\cM$ be a surjective morphism between orbifolds.
Suppose $\cU'\!\subset\!\ov\cM'$ is an open subset so that 
\BE{surjcond_e}\ov\cM'\!-\cU' \subset \ov\cM' \qquad\hbox{and}\qquad
\ff\big(\ov\cM'\!-\cU'\big) \subset \ov\cM\EE
are finite unions of suborbifolds of codimensions at least~2 and~1, respectively,
the restriction of~$\ff$ to~$\cU'$ is a submersion,
and the vector orbi-bundle
\BE{ndffbndl_e}\ker\nd\ff|_{\cU'}\lra \cU'\EE
is oriented. 
The short exact sequence 
\BE{submerses_e}0\lra \ker\nd\ff\big|_{\cU'}\lra T\ov\cM'\big|_{\cU'} \xlra{\,\nd\ff\,}
\ff^*T\ov\cM\big|_{\cU'}\lra0\EE
of vector orbi-bundles over~$\cU'$ and the orientation~$\fo$ of~\eref{ndffbndl_e} 
determine an isomorphism
$$\fO_{\ov\cM'}|_{\cU'}\approx\ff^*\fO_{\ov\cM}|_{\cU'}$$
of local systems over~$\cU'$.
By the codimension~2 assumption above, this isomorphism extends over all of~$\ov\cM'$
and thus determines and a homomorphism
$$H^*(\ov\cM';\Q)\!\otimes\!\wt{H}^*(\ov\cM;\Q) \lra\wt{H}^*(\ov\cM';\Q),
\qquad \ga'\!\otimes\!\ga\lra \ga'(\ff^*\ga) .$$
Furthermore, $\ff^{-1}(u)\!\subset\!\ov\cM'$ is an oriented suborbifold 
for every \hbox{$u\!\in\!\ff(\ov\cM'\!-\cU')$}.
If in addition $\ov\cM,\ov\cM'$ are compact and either oriented so that 
the exact sequence~\eref{submerses_e} respects the three orientations
or connected and unorientable,
then $\ff^{-1}(u)\!\subset\!\ov\cM'$ is compact and 
\BE{fiberint_e}\blr{\ga'(\ff^*\ga),[\ov\cM']}=\blr{\ga',[\ff^{-1}(u)]}\blr{\ga,[\ov\cM]}\EE
for every $\ga\!\in\!\wt{H}^*(\ov\cM;\Q)$ of top degree, 
every $\ga'\!\in\!H^*(\ov\cM';\Q)$, and every $u\!\in\!\ff(\ov\cM'\!-\cU')$.\\

\noindent
Let $\io\!:\ov\cM'\!\lra\!\ov\cM$ be a codimension~$r$ immersion between orbifolds.
Suppose the normal bundle 
$$\pi\!:\cN\io\equiv \frac{\io^*T\ov\cM}{\nd\io(T\ov\cM')}\lra \ov\cM'$$
of~$\io$ is oriented.
The short exact sequence 
\BE{immerses_e}0\lra  T\ov\cM' \xlra{\,\nd\io\,}\io^*T\ov\cM\lra\cN\io\lra0\EE
of vector orbi-bundles over~$\ov\cM'$ and the orientation~$\fo$ of~~$\cN\io$ 
determine an isomorphism \hbox{$\fO_{\ov\cM'}\!\approx\!\io^*\fO_{\ov\cM}$}
of local systems over~$\ov\cM'$ and thus a homomorphism
$$H^*(\ov\cM';\Q)\!\otimes\!\wt{H}^*(\ov\cM;\Q)\lra\wt{H}^*(\ov\cM';\Q),
\qquad \ga'\!\otimes\!\ga\lra \ga'(\io^*\ga).$$
If in addition $\ov\cM,\ov\cM'$ are compact and either oriented so that 
the exact sequence~\eref{immerses_e} respects the three orientations
or connected and unorientable, then we obtain a \sf{pushforward homomorphism}
$$\io_*\!:H^*(\ov\cM';\Q)\lra H^*(\ov\cM;\Q),  \qquad
\io_*(\ga')=\PD_{\ov\cM}^{-1}\big(\io_*\big(\PD_{\ov\cM'}(\ga')\!\big)\!\big).$$
By~\eref{cupcap_e}-\eref{intdfn_e},
\BE{pushintprp_e} 
\blr{\ga'(\io^*\ga),[\ov\cM']}=(-1)^{r|\ga|}\blr{(\io_*\ga')\ga,[\ov\cM]}
\quad\forall~\ga\!\in\!H^*(\ov\cM';\Q),\,\ga'\!\in\!\wt{H}^*(\ov\cM;\Q).\EE
Similarly to the definition on page~120 and Exercise 11-C in~\cite{MiSt},
the orientation~$\fo$ on~$\cN\io$ also determines a class 
\hbox{$u''\!\in\!H^r(\ov\cM;\Q)$} so~that
\BE{pushint_e}
\io_*\big([\ov\cM']\big)=u''\!\cap\![\ov\cM]\in \wt{H}_*(\ov\cM;\Q).\EE

\section{Complex Gromov-Witten theory}
\label{CGWs_sec}

\subsection{Moduli spaces}
\label{CMS_subs}

\noindent
Let $g\!\in\!\Z$ and $\ell\!\in\!\Z^{\ge0}$.
We denote by $\ov\cM_{g,\ell}$ the Deligne-Mumford moduli space
of stable closed connected, but possibly nodal, $\ell$-marked Riemann surfaces~$\Si$ 
of arithmetic genus~$g$.
This space is a compact complex orbifold of complex dimension
\BE{cMdim_e}\dim_{\C}\ov\cM_{g,\ell}=3(g\!-\!1)\!+\!\ell\EE
and is thus oriented; it is empty if $g\!<\!0$ or $2g\!+\!\ell\!<\!3$.
A permutation $\vp\!\in\!\bS_{\ell}$ acts on~$\ov\cM_{g,\ell}$ by 
sending a stable $\ell$-marked Riemann surface~$\Si$
to the stable $\ell$-marked Riemann surface~$\vp(\Si)$ so~that the marked points
of the two surfaces are related~by
\BE{bSellact_e} z_{\vp(i)}\big(\vp(\Si)\!\big)=z_i(\Si) \quad\forall\,i\!\in\![\ell].\EE
This determines an action of~$\bS_{\ell}$ on~$\ov\cM_{g,\ell}$ by 
holomorphic automorphisms.
Let
$$\bE\lra \ov\cM_{g,\ell}$$
be the Hodge vector bundle of holomorphic differentials.\\

\noindent
If $\ell\!>\!0$ and $2g\!+\!\ell\!>\!3$, let
$$\ff_{\ell}\!: \ov\cM_{g,\ell}\lra \ov\cM_{g,\ell-1}$$
be the natural forgetful morphism dropping the last marked point.
The preimage $\cU'\!\subset\!\ov\cM_{g,\ell}$ of the open subspace
\hbox{$\cM_{g,\ell-1}\!\subset\!\ov\cM_{g,\ell-1}$} consisting of smooth curves
satisfies the codimension conditions below~\eref{surjcond_e} and 
the orientation condition above~\eref{fiberint_e}.
We note~that
\BE{bEffl_e} \bE=\ff_{\ell}^*\bE\lra \ov\cM_{g,\ell}\EE
under the above assumptions.\\

\noindent
We denote by
\BE{Ciogldfn_e}\io_{g,\ell}\!:\ov\cM_{g-1,\ell+2}\lra\ov\cM_{g,\ell}\EE
the immersion obtained by identifying the last two marked points of each Riemann surface
in the domain to form a node; this map is generically $2\!:\!1$ onto its image.
For each $P\!\in\!\cP(g,\ell)$ as in~\eref{PwtPdfn_e}, let
\BE{CioPdfn_e}\io_P\!: 
\ov\cM_P\!\equiv\!\ov\cM_{g_1,|I|+1}\!\times\!\ov\cM_{g_2,|J|+1}\lra \ov\cM_{g,\ell}\EE
be the immersion obtained by identifying the last marked points of each pair of Riemann surfaces
in the domain to form a node and 
by re-ordering the remaining pairs of marked points according to the bijection~\eref{Sperm_e}.
These two immersions are illustrated in Figure~\ref{Cimmers_fig}.
The normal bundles to these immersions have canonical orientations that satisfy
the orientation condition above~\eref{pushintprp_e}.\\

\begin{figure}
\begin{pspicture}(-.5,-.2)(10,2)
\psset{unit=.4cm}
\psarc[linewidth=.06](4,2.5){2}{135}{45}\pscircle*(2,2.5){.2}\pscircle*(6,2.5){.2}
\rput(.8,2.5){\sm{$z_{\ell+1}$}}\rput(7.2,2.5){\sm{$z_{\ell+2}$}}
\psline[linewidth=.03]{->}(7,1.5)(11,1.5)\rput(9,2.1){\sm{$\io_{g,\ell}$}}
\psarc[linewidth=.06](13,1.7){1.2}{120}{60}\pscircle*(13,3.09){.2}
\psline[linewidth=.06](12.4,2.74)(14.2,3.78)\psline[linewidth=.06](13.6,2.74)(11.8,3.78)
\psarc[linewidth=.06](20,2.5){2}{-90}{0}\psline[linewidth=.06](22,2.5)(22,3.5)
\psarc[linewidth=.06](25,2.5){2}{180}{270}\psline[linewidth=.06](23,2.5)(23,3.5)
\pscircle*(22,2.5){.2}\pscircle*(23,2.5){.2}
\rput(20.7,2.5){\sm{$z_{|I|+1}$}}\rput(24.5,2.5){\sm{$z_{|J|+1}$}}
\psline[linewidth=.03]{->}(25,1.5)(29,1.5)\rput(27,2.1){\sm{$\io_P$}}
\psarc[linewidth=.06](29,2.5){2}{-90}{-45}\psline[linewidth=.06](30.41,1.09)(32.41,3.09)
\psarc[linewidth=.06](34,2.5){2}{225}{270}\psline[linewidth=.06](32.59,1.09)(30.59,3.09)
\pscircle*(31.5,2.18){.2}
\rput(28.5,.5){\sm{$g_1$}}\rput(34.5,.5){\sm{$g_2$}}
\rput(30.6,.6){\sm{$I$}}\rput(32.2,.6){\sm{$J$}}
\end{pspicture}
\caption{Typical elements in the domains and images of the immersions~\eref{Ciogldfn_e} 
and~\eref{CioPdfn_e},
with the genus and marked points of each irreducible component of an image of~\eref{CioPdfn_e}
indicated next to~it.}
\label{Cimmers_fig}
\end{figure}

\noindent
Let $(X,\om)$ be a compact symplectic manifold of real dimension~$2n$,  
$$H_2(X;\Z)_{\om}=\big\{B\!\in\!H_2(X;\Z)\!:\,\om(B)\!>\!0~\hbox{or}~B\!=\!0\big\},$$
and $\cJ_{\om}$ be the space of $\om$-tamed almost complex structures on~$X$.
For $g,\ell\!\in\!\Z^{\ge0}$, $B\!\in\!H_2(X;\Z)$, and $J\!\in\!\cJ_{\om}$,
we denote by $\ov\fM_{g,\ell}(B;J)$ the moduli space of stable $J$-holomorphic degree~$B$ maps 
from closed connected, but possibly nodal, $\ell$-marked Riemann surfaces of arithmetic genus~$g$.
This space is empty if either \hbox{$B\!\not\in\!H_2(X;\Z)_{\om}$} or
$B\!=\!0$ and $2g\!+\!\ell\!<\!3$.
For each $i\!=\!1,\ldots,\ell$, let 
$$\ev_i\!:\ov\fM_{g,\ell}(B;J)\lra X \qquad\hbox{and}\qquad
\psi_i\in H^2\big(\ov\fM_{g,\ell}(B;J);\Q\big)$$
be the natural evaluation map at the  $i$-th  marked point and
the Chern class of the universal cotangent line bundle for this marked point, 
respectively.
The symmetric group~$\bS_{\ell}$ acts on $\ov\fM_{g,\ell}(B;J)$ 
similarly to~\eref{bSellact_e}.
This action satisfies
\BE{bSellact_e2b}\ev_i\!=\!\ev_{\vp(i)}\!\circ\!\vp \quad\hbox{and}\quad
\psi_i\!=\!\vp^*\psi_{\vp(i)} \qquad\forall\,\vp\!\in\!\bS_{\ell},\,i\!\in\![\ell].\EE

\vspace{.18in}

\noindent
By \cite{LT,FO}, the moduli space $\ov\fM_{g,\ell}(B;J)$ carries 
a natural virtual fundamental class of dimension/degree
\BE{CfMdim_e}\begin{split}
\dim\,[\ov\fM_{g,\ell}(B;J)]^{\vir}
&=2\big((1\!-\!g)(n\!-\!3)\!+\!\ell\!+\!\blr{c_1(X,\om),B}\!\big)\\
&=2\dim_{\C}\ov\cM_{g,\ell}
\!+\!2\big(n(1\!-\!g)\!+\!\blr{c_1(X,\om),B}\!\big).
\end{split}\EE
This class is preserved by the $\bS_{\ell}$-action.
For \hbox{$a_1,\ldots,a_{\ell}\!\in\!\Z^{\ge0}$} and 
\hbox{$\mu_1,\ldots,\mu_{\ell}\!\in\!H^*(X;\Q)$},
let 
\BE{CGWdfn_e}\blr{\tau_{a_1}(\mu_1),\ldots,\tau_{a_{\ell}}(\mu_{\ell})}_{\!g,B}^{\!\om} 
=\int_{[\ov\fM_{g,\ell}(B;J)]^{\vir}}\!\! 
\psi_1^{a_1}\!\big(\ev_1^*\mu_1\big)\ldots\psi_{\ell}^{a_{\ell}}\!\big(\ev_{\ell}^*\mu_{\ell}\big)\EE
be the associated \sf{descendant GW-invariant}.
This number is independent of the choice of~$J\!\in\!\cJ_{\om}$.\\

\noindent
If $2g\!+\!\ell\!\ge\!3$, let
\BE{ffdfn_e}\ff:\ov\fM_{g,\ell}(B;J)\lra \ov\cM_{g,\ell} \EE
be the natural forgetful morphism to the corresponding Deligne-Mumford moduli space.
It satisfies
\BE{ffvp_e}\ff\!\circ\!\vp\!=\!\vp\!\circ\!\ff:
\ov\fM_{g,\ell}(B;J)\lra \ov\cM_{g,\ell}
\qquad\forall\,\vp\!\in\!\bS_{\ell}\,.\EE
We denote by
$$\pi_{\ov\cM_{g,\ell}},\pi_{X^{\ell}}\!:
\ov\cM_{g,\ell}\!\times\!X^{\ell}\lra \ov\cM_{g,\ell},X^{\ell}$$
the component projection maps.
Using Poincar\'e Duality on~$\ov\cM_{g,\ell}$ and $\ov\cM_{g,\ell}\!\times\!X^{\ell}$,
we define 
\BE{CIdfn_e}
\cI_{g,\ell,B}^{\om}\!: H^*(X;\Q)^{\otimes\ell}\lra H^*\big(\ov\cM_{g,\ell};\Q\big)
\quad\hbox{and}\quad
C_{g,\ell,B}^{\om}\in H^*\big(\ov\cM_{g,\ell}\!\times\!X^{\ell};\Q\big)\EE
by requiring that 
\begin{gather*}
\int_{\ov\cM_{g,\ell}}\!\!\ga
\cI_{g,\ell,B}^{\om}\big(\mu_1,\ldots,\mu_{\ell}\big)=
\int_{[\ov\fM_{g,\ell}(B;J)]^{\vir}}\!\!
\big(\ff^*\ga\big)\!  
\big(\ev_1^*\mu_1\big)\ldots\!\big(\ev_{\ell}^*\mu_{\ell}\big)\quad\hbox{and}\\
\int_{\ov\cM_{g,\ell}\times X^{\ell}}\!\! 
\big(\pi_{\ov\cM_{g,\ell}}^*\!\ga\big)C_{g,\ell,B}^{\om}
\big(\pi_{X^{\ell}}^*(\mu_1\!\times\!\ldots\!\times\!\mu_{\ell})\!\big)
=\int_{[\ov\fM_{g,\ell}(B;J)]^{\vir}}\!\! \big(\ff^*\ga\big)\!
\big(\ev_1^*\mu_1\big)\ldots\!\big(\ev_{\ell}^*\mu_{\ell}\big)
\end{gather*}
for all $\mu_i\!\in\!H^*(X;\Q)$  and $\ga\!\in\!H^*(\ov\cM_{g,\ell};\Q)$.
The linear maps~$\cI_{g,\ell,B}^{\om}$ and the correspondences $C_{g,\ell,B}^{\om}$ in~\eref{CIdfn_e} 
are independent of the choice of \hbox{$J\!\in\!\cJ_{\om}$}.
If $g\!\not\in\!\Z^{\ge0}$ or $2g\!+\!\ell\!<\!3$, we set $\cI_{g,\ell,B}^{\om}\!=\!0$.

\subsection{Properties of invariants}
\label{CGWs_subs}

\noindent
The descendant GW-invariants~\eref{CGWdfn_e} satisfy the following properties:
\begin{enumerate}[label=$\C{\arabic*}\!\!$,ref=$\C{\arabic*}$,leftmargin=*]

\item\label{Ceff1_it} ({\it Effectivity I}):
$\lr{\tau_{a_1}(\mu_1),\ldots,\tau_{a_{\ell}}(\mu_{\ell})}_{\!g,B}^{\om}\!=\!0$ 
if $B\!\not\in\!H_2(X;\Z)_{\om}$ or $B\!=\!0$ and $2g\!+\!{\ell}\!<\!3$;

\item\label{Cgrad1_it} ({\it Grading I}): 
$\lr{\tau_{a_1}(\mu_1),\ldots,\tau_{a_{\ell}}(\mu_{\ell})}_{\!g,B}^{\om}\!=\!0$ if
$$\sum_{i=1}^{\ell}\!\big(2a_i\!+\!|\mu_i|\big)
\neq2\big((1\!-\!g)(n\!-\!3)\!+\!\ell\!+\!\blr{c_1(X,\om),B}\!\big);$$

\item\label{Cstr_it}  ({\it String}): if $B\!\neq\!0$ or $(g,\ell)\!\neq\!(0,2)$, 
\begin{equation*}\begin{split}
&\blr{\tau_{a_1}(\mu_1),\ldots,\tau_{a_{\ell}}(\mu_{\ell}),\tau_0(1)}_{\!g,B}^{\!\om}\\
&\qquad=\sum_{\begin{subarray}{c}1\le i \le\ell\\ a_i>0\end{subarray}}
\!\!\!\blr{\tau_{a_1}(\mu_1),\ldots,\tau_{a_{i-1}}(\mu_{i-1}), 
\tau_{a_i-1}(\mu_i),\tau_{a_{i+1}}(\mu_{i+1}),\ldots,\tau_{a_{\ell}}(\mu_{\ell})}_{\!g,B}^{\!\om}; 
\end{split}\end{equation*}

\item\label{Cdil_it}  ({\it Dilaton}): 
if $B\!\neq\!0$ or $(g,\ell)\!\neq\!(1,0)$,
$$\blr{\tau_{a_1}(\mu_1),\ldots,\tau_{a_{\ell}}(\mu_{\ell}),\tau_1(1)}_{\!g,B}^{\!\om}
=\big(2g\!-\!2\!+\!{\ell}\big)\blr{\tau_{a_1}(\mu_1),
\ldots,\tau_{a_{\ell}}(\mu_{\ell})}_{\!g,B}^{\!\om};$$

\item\label{CdivII_it}  ({\it Divisor I}):
if $\mu_{\ell+1}\!\in\!H^2(X;\Q)$ and either $B\!\neq\!0$ or $2g\!+\!\ell\ge3$,
\begin{equation*}\begin{split}
&\blr{\tau_{a_1}(\mu_1),\ldots,\tau_{a_{\ell}}(\mu_{\ell}),\tau_0(\mu_{\ell+1})}_{\!g,B}^{\!\om} 
=\lr{\mu_{\ell+1},B}\blr{\tau_{a_1}(\mu_1),\ldots,\tau_{a_{\ell}}(\mu_{\ell})}_{\!g,B}^{\om}\\
&\qquad+\sum_{\begin{subarray}{c}1\le i\le\ell\\ a_i>0\end{subarray}}
\!\!\!\blr{\tau_{a_1}(\mu_1),\ldots,\tau_{a_{i-1}}(\mu_{i-1}),
\tau_{a_i-1}(\mu_i\mu_{\ell+1}),\tau_{a_{i+1}}(\mu_{i+1}),\ldots, 
\tau_{a_{\ell}}(\mu_{\ell})}_{\!g,B}^{\!\om}.
\end{split}\end{equation*}

\setcounter{saveenumi}{\arabic{enumi}}
\end{enumerate}
The {\it Effectivity} properties above and below follow immediately 
from $\ov\fM_{g,\ell}(B;J)\!=\!\eset$
if either \hbox{$B\!\not\in\!H_2(X;\Z)_{\om}$} or $B\!=\!0$ and $2g\!+\!\ell\!<\!3$.
The ${\it Grading}$ properties above and below are consequences of~\eref{CfMdim_e}.
For \ref{Cstr_it}-\ref{CdivII_it}, see \cite[Section~26.3]{MirSym}.\\

\noindent
The linear maps~\eref{CIdfn_e} satisfy \sf{Kontsevich-Manin's axioms} of 
\cite[Section~2]{KM}:
\begin{enumerate}[label=$\C{\arabic*}\!\!$,ref=$\C{\arabic*}$,leftmargin=22pt]
\setcounter{enumi}{\arabic{saveenumi}}

\item\label{Ceff2_it} ({\it Effectivity II}): 
$\cI_{g,\ell,B}^{\om}\!=\!0$ if $B\!\not\in\!H_2(X;\Z)_{\om}$;

\item\label{Ccov_it} ({\it $\bS_{\ell}$-Covariance}): 
the map $\cI_{g,\ell,B}^{\om}$ is $\bS_{\ell}$-equivariant;

\item\label{Cgrad_it} ({\it Grading II}): $\cI_{g,\ell,B}^{\om}$ is homogeneous of degree
$2(g\!-\!1)n\!-\!2\lr{c_1(X,\om),B}$, i.e.
$${}\hspace{-.5in}
\big|\cI_{g,\ell,B}^{\om}(\mu)\big|=|\mu|+2(g\!-\!1)n\!-\!2\blr{c_1(X,\om),B}
\quad\forall\,\mu\!\in\!H^*(X;\Q)^{\ell};$$

\item\label{CfcI_it} ({\it Fundamental Class I}): for all $\mu_1,\mu_2\!\in\!H^*(X;\Q)$,
$$\cI_{0,3,B}^{\om}\big(\mu_1,\mu_2,1\big)
=\begin{cases} 0, &\hbox{if}~B\!\neq\!0;\\ 
\blr{\mu_1\mu_2,[X]}, &\hbox{if}~B\!=\!0;\end{cases}$$

\item\label{CfcII_it} ({\it Fundamental Class II}): if $2g\!+\!\ell\!\ge\!3$
and $\mu\!\in\!H^*(X;\Q)^{\ell}$, 
$${}\hspace{-.5in}
\cI_{g,\ell+1,B}^{\om}(\mu,1)
=\ff_{\ell+1}^*\big(\cI_{g,\ell,B}^{\om}(\mu)\!\big);$$

\item\label{CdivI_it}  ({\it Divisor II}):  
if $2g\!+\!\ell\!\ge\!3$, $\mu\!\in\!H^*(X;\Q)^{\ell}$, $\mu_{\ell+1}\!\in\!H^2(X;\Q)$, 
and $\ga\!\in\!H^*(\ov\cM_{g,\ell};\Q)$, 
$${}\hspace{-.5in}
\int_{\ov\cM_{g,\ell+1}}\!\!\big(\ff_{\ell+1}^*\ga\big)
\cI_{\!g,\ell+1,B}^{\om}\big(\mu,\mu_{\ell+1}\big)
=\lr{\mu_{\ell+1},B}\!
\int_{\ov\cM_{g,\ell}}\!\!\ga\cI_{g,\ell,B}^{\om}(\mu);$$

\item\label{CMapPt_it} ({\it Mapping to Point}):
for all $\mu_1,\ldots,\mu_{\ell}\!\in\!H^*(X;\Q)$,
$${}\hspace{-.5in}
\cI_{g,{\ell},0}^{\om}\big(\mu_1,\ldots,\mu_{\ell}\big)
=\sum_{J_1,J_2\in(\Z^{\ge0})^{\i}} \!\!\!\!\!\!\!\!\!\!
C_{J_1,J_2}^{(n,g)}\blr{c_{J_1}(X,\om)\mu_1\ldots\mu_{\ell},[X]}c_{J_2}(\bE^*)
\in H^*\big(\ov\cM_{g,\ell};\Q\big);$$

\item\label{CGenusRed_it} ({\it Genus Reduction}): for all $\mu\!\in\!H^*(X;\Q)^{\ell}$,
$$\io_{g,\ell}^{\,*}\big(\cI_{g,\ell,B}^{\om}(\mu)\!\big)
=\sum_{i,j=1}^N\! g^{ij} \cI_{g-1,\ell+2,B}^{\om}\big(\mu,e_i,e_j\big);$$

\item\label{CSplit_it} ({\it Splitting}): for all 
$\mu\!\in\!H^*(X;\Q)^{\ell}$ and $P\!\equiv\!(g_1,g_2;I,J)\!\in\!\cP(g,\ell)$,
$${}\hspace{-.5in}
\io_P^*\big(\cI_{g,\ell,B}^{\om}(\mu)\!\big)=(-1)^{\ve(P,\mu)}
\!\!\!\!\!\!\!\!\sum_{\begin{subarray}{c}B_1,B_2\in H_2(X;\Z)\\
B_1+B_2=B\end{subarray}}\!\sum_{i,j=1}^N\! g^{ij}
\cI_{g_1,|I|+1,B_1}^{\om}\!\big(\mu_I,e_i\big)\!\times\!
\cI_{g_2,|J|+1,B_2}^{\om}\!\big(e_j,\mu_J\big);$$

\item\label{CMotivic_it} ({\it Motivic Axiom}):
$\cI_{g,\ell,B}^{\om}$ is induced by the correspondence~$C_{g,\ell,B}^{\om}$, i.e.
$${}\hspace{-.3in}\cI_{g,\ell,B}^{\om}(\mu_1,\ldots,\mu_{\ell})= 
\PD_{\ov\cM_{g,\ell}}^{\,-1}\!\bigg(\!\!
\big\{\pi_{\ov\cM_{g,\ell}^{\bu}}\big\}_{\!*}
\PD_{\ov\cM_{g,\ell}\times X^{\ell}}\!
\Big(C_{g,\ell,B}^{\om}\big(\!\big\{\pi_{X^{\ell}}\big\}^{\!*}\!
(\mu_1\!\times\!\ldots\!\times\!\mu_{\ell})\!\big)\!\Big)
\!\!\!\bigg)$$
for all $\mu_1,\ldots,\mu_{\ell}\!\in\!H^*(X;\Q)$.
\end{enumerate}

\vspace{.18in}

\noindent
The property~\ref{Ccov_it} follows from the $\bS_{\ell}$-invariance
of the virtual fundamental class of~$\ov\fM_{g,\ell}(B;J)$,
the first identity in~\eref{bSellact_e2b}, and~\eref{ffvp_e}.
The property~\ref{CMotivic_it} is immediate from~\eref{cupcap_e}, \eref{PDdfn_e},  
and the definitions of~$\cI_{g,\ell,B}^{\om}$ 
and~$C_{g,\ell,B}^{\om}$ after~\eref{CIdfn_e}.
The statements \ref{CfcI_it}-\ref{CSplit_it}
are consequences of natural properties of the virtual fundamental class for $\ov\fM_{g,\ell}(B;J)$
constructed in~\cite{LT,FO}.
The analogous properties in the real GW-theory are 
established in Sections~\ref{RGWs_subs2} and~\ref{RSplitPf_subs}.\\

\noindent
The $g\!=\!0$ case of~\ref{CMapPt_it} is equivalent~to 
\BE{g0basic_e} \cI_{0,\ell,0}^{\om}\big(\mu_1,\ldots,\mu_{\ell}\big)
=\blr{\mu_1\!\ldots\!\mu_{\ell},X}\in H^0\big(\ov\cM_{0,\ell};\Q\big).\EE
The $g\!=\!1$ case of~\ref{CMapPt_it} is equivalent~to 
\BE{g1basic_e}
\cI_{1,\ell,0}^{\om}\big(\mu_1,\ldots,\mu_{\ell}\big)
=\begin{cases}
\lr{\mu_1\!\ldots\!\mu_{\ell} c_n(X,\om),X},&\hbox{if}~\mu_1,\ldots,\mu_{\ell}\!\in\!H^0(X;\Q);\\
-\lr{\mu_1\!\ldots\!\mu_{\ell} c_{n-1}(X,\om),X}c_1(\bE),&\hbox{if}~
|\mu_1|\!+\!\ldots\!+\!|\mu_{\ell}|\!=\!2;\\
0,&\hbox{otherwise}.
\end{cases}\EE
By~\ref{CMapPt_it}, \eref{cMdim_e}, and~\eref{bEffl_e}, 
the restriction of $\cI_{g,\ell,B}^{\om}(\mu_1,\ldots,\mu_{\ell})$
to $\ov\cM_{g,\ell}$ vanishes if $g\!\ge\!2$ and $n\!\ge\!4$.

\subsection{Reconstruction of invariants}
\label{CRecon_subs}

\noindent
Following \cite[Definition~2.2]{KM}, we call a collection of homomorphisms~$\cI_{0,\ell,B}^{\om}$
as in~\eref{CIdfn_e} with $\ell\!\ge\!3$ and $B\!\in\!H_2(X;\Z)$ which satisfies
the properties \ref{Ceff2_it}-\ref{CfcII_it}, \ref{CdivI_it} with $\ga\!=\!1$, 
and \ref{CMapPt_it}-\ref{CMotivic_it} a \sf{tree-level system of GW-classes for~$(X,\om)$}.
By Proposition~2.5.2 and Theorem~3.1 in~\cite{KM},
a tree-level system of GW-classes for~$(X,\om)$
can be reconstructed from a small subcollection of the full collection.
This follows readily from the fact that the ring $H^*(\ov\cM_{0,\ell};\Q)$
is generated by the Poincar\'e duals of the ``boundary" classes, 
the images of the immersion~$\io_P$ in~\eref{CioPdfn_e} with $P\!\in\cP(g,\ell)$;
see~(1) and~(4) on~p545 in~\cite{Keel}.

\begin{prp}[{\cite{KM}}]\label{Crecont_prp1}
Let $(X,\om)$ be a compact symplectic manifold of real dimension~$2n$.
A tree-level system $\{\cI_{0,\ell,B}^{\om}\}$ of GW-classes for~$(X,\om)$
is determined by the~numbers
$$\int_{\ov\cM_{0,{\ell}}}\!\!\!\cI_{0,\ell,B}^{\om}(\mu)\in\Q
\qquad\hbox{with}\quad
\ell\in\Z^{\ge0},\,B\!\in\!H_2(X;\Z)_{\om},\,\mu\!\in\!H^*(X;\Q)^{\ell},$$
i.e.~by the codimension~0 classes $\cI_{0,\ell,B}^{\om}(\mu)$.
If in addition $H^*(X;\Q)$ is generated as a ring by~$H^2(X;\Q)$, then 
this system is determined by the codimension~0 classes $\cI_{0,3,B}^{\om}(\mu_1,\mu_2,\mu_3)$ with 
$$B\in H_2(X;\Z)_{\om},\quad \lr{c_1(X,\om),B}\le n\!+\!1, 
\quad |\mu_1|\!+\!|\mu_2|=2n\!+\!2\blr{c_1(X,\om),B}\!-\!2, \quad |\mu_3|=2.$$
\end{prp}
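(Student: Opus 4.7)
The plan is to follow the approach of~\cite{KM} in two parts.

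For \textbf{Part~1}, I would combine Poincar\'e duality on the compact oriented orbifold $\ov\cM_{0,\ell}$ with the Splitting axiom~\ref{CSplit_it}.  A class $\cI_{0,\ell,B}^{\om}(\mu)$ is determined by the numerical pairings $\int_{\ov\cM_{0,\ell}}\ga\cdot\cI_{0,\ell,B}^{\om}(\mu)$ as $\ga$ varies over $H^*(\ov\cM_{0,\ell};\Q)$.  By Keel's theorem (cited after the statement), this ring is generated as a $\Q$-algebra by the boundary classes $\io_{P*}(1)$ for $P\!\in\!\cP(0,\ell)$, so any such $\ga$ is a polynomial in these. Iterating the projection formula~\eref{pushintprp_e} rewrites each pairing as an integral $\int_{\ov\cM_P}\ga'\cdot\io_P^*(\cI_{0,\ell,B}^{\om}(\mu))$ for suitable $P$ and $\ga'\!\in\!H^*(\ov\cM_P;\Q)$, and the Splitting axiom expands the pullback as a sum of products of $\cI$-classes with strictly fewer marked points.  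An induction on~$\ell$, with base $\ell\!=\!3$ (where $\ov\cM_{0,3}$ is a point and every class is automatically top-degree), then expresses every such pairing as a $\Q$-linear combination of the codimension-$0$ numbers $\int_{\ov\cM_{0,\ell'}}\cI_{0,\ell',B'}^{\om}(\mu')$.

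For \textbf{Part~2}, by Part~1 it suffices to recover each codimension-$0$ number from the listed 3-point ones.  I would use three tools in a combined induction: (i) the Divisor axiom~\ref{CdivI_it} with $\ga\!=\!1$, which reduces $\ell\!\geq\!4$ to $\ell\!-\!1$ whenever some $\mu_i\!\in\!H^2(X;\Q)$; (ii) the Fundamental Class axiom~\ref{CfcII_it}, which together with a dimension count shows the codim-$0$ number vanishes whenever some $\mu_i\!=\!1\!\in\!H^0$; and (iii) the WDVV relation, obtained by equating the Splitting-axiom expansions of $\cI_{0,4,B}^{\om}(\xi_1,\xi_2,\xi_3,\xi_4)\!\in\!H^0(\ov\cM_{0,4};\Q)$ along any two of the three 2-2 boundary divisors of $\ov\cM_{0,4}\!\cong\!\P^1$, which are mutually cohomologous.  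The assumption $H^*(X;\Q)\!=\!\Q[H^2(X;\Q)]$ lets me always factor any $\mu_i$ of degree $>\!2$ as $\nu\mu_i'$ with $\nu\!\in\!H^2$.  Using $\bS_3$-covariance~\ref{Ccov_it} to place the smallest-degree insertion in the third slot and invoking WDVV on the auxiliary 4-point invariant $\cI_{0,4,B}^{\om}(\mu_1,\mu_2,\mu_3',\nu)$, I would extract the $B_1\!=\!0$ and $B_2\!=\!0$ contributions via the identity $\cI_{0,3,0}^{\om}(\xi_1,\xi_2,e_i)\!=\!\lr{\xi_1\xi_2e_i,X}$ from~\eref{g0basic_e} together with $\sum_{i,j}g^{ij}\lr{e_j\eta,X}e_i\!=\!\eta$.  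This produces an expression for the target 3-point number in terms of (a) 3-point numbers with an $H^2$-insertion, (b) 3-point numbers with strictly smaller ``smallest insertion degree'' after $\bS_3$-reordering, and (c) cross-terms $\cI_{0,3,B_1}^{\om}\!\cdot\!\cI_{0,3,B_2}^{\om}$ with $B_1\!+\!B_2\!=\!B$ and $B_1,B_2\!\in\!H_2(X;\Z)_{\om}\!\setminus\!\{0\}$.

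The main obstacle will be verifying that this combined recursion is well-founded.  The descent on the ``smallest insertion degree'' works at fixed~$B$; the descent on the cross-terms uses that $\om$-values on $H_2(X;\Z)$ form a discrete subset of~$\R$ and $0\!<\!\om(B_i)\!<\!\om(B)$ for the relevant $B_i$.  The base cases are: $B\!=\!0$, where $\cI_{0,3,0}^{\om}(\mu)\!=\!\lr{\mu_1\mu_2\mu_3,X}$ is directly given by~\eref{g0basic_e}; $|\mu_3|\!=\!2$, which is the hypothesis; and $|\mu_3|\!=\!0$, which by~\ref{CfcI_it} yields $0$ for $B\!\neq\!0$ and $\lr{\mu_1\mu_2,X}$ for $B\!=\!0$.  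The bound $\lr{c_1(X,\om),B}\!\leq\!n\!+\!1$ in the statement is not an additional hypothesis but merely the dimension constraint $|\mu_1|\!+\!|\mu_2|\!\leq\!4n$ needed for these 3-point numbers to potentially be nonzero.  Care will also be needed to track the Koszul signs $\ve(P,\mu)$ arising in~\ref{CSplit_it}, but these are of a standard form and do not affect the inductive structure.
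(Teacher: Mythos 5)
Your proposal is correct and fills in the details of the \cite{KM} argument that the paper cites rather than reproves: Part~1 is precisely the paper's own remark — Keel's theorem that $H^*(\ov\cM_{0,\ell};\Q)$ is generated by boundary classes, combined with the projection formula, the Splitting axiom, and induction on $\ell$ — and Part~2 recovers the First Reconstruction Theorem of \cite{KM} via WDVV. Two places to tighten. In Part~2, the reduction of $\ell\geq 4$ to $\ell-1$ is described only when some $\mu_i$ lies in $H^2(X;\Q)$ or equals~$1$; when every insertion has degree at least~$4$ (which is possible even at codimension~$0$), you must run the same factor-and-apply-WDVV step at $\ell+1$ points, not only at four, to lower the smallest insertion degree until an $H^2$-class appears that Divisor can then peel off — the same smallest-degree descent, just not confined to $\ell=3$. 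Also, the claim that $\om$-values on $H_2(X;\Z)$ form a discrete subset of~$\R$ is false in general; what the cross-term descent actually needs is that each $B$ admits only finitely many decompositions $B=B_1+B_2$ with $B_1,B_2\in H_2(X;\Z)_\om$ both nonzero, which holds by Gromov compactness and is encoded formally in \cite{KM} via the Novikov ring. Neither point changes the inductive structure, and your observation that the bound $\lr{c_1(X,\om),B}\le n+1$ is simply the dimension constraint once $|\mu_3|=2$ is correct.
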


\vspace{.18in}

\noindent
A manifestation of the first statement of this proposition is the renown \sf{WDVV equation} 
for GW-invariants:
\BE{CWDVV_e}\begin{split}
&\sum_{\begin{subarray}{c}[\ell]=I\sqcup J\\ 1,2\in I,\,3,4\in J\end{subarray}}
\!\!\!\sum_{B_1+B_2=B}\sum_{i,j=1}^N
(-1)^{\ve((I,J),\mu)}
\blr{\mu_I,e_i}_{\!0,B_1}^{\!\om} g^{ij} \blr{e_j,\mu_J}_{\!0,B_2}^{\!\om}\\
&\hspace{1in}=
\sum_{\begin{subarray}{c}[\ell]=I\sqcup J\\ 1,3\in I,\,2,4\in J\end{subarray}}
\!\!\!\sum_{B_1+B_2=B}\sum_{i,j=1}^N
(-1)^{\ve((I,J),\mu)}\blr{\mu_I,e_i}_{\!0,B_1}^{\!\om} g^{ij} 
\blr{e_j,\mu_J}_{\!0,B_2}^{\!\om} 
\end{split}\EE
for all $\ell\!\in\!\Z^+$, $B\!\in\!H_2(X;\Z)$, and $\mu\!\in\!H^*(X;\Q)^{\ell}$.
The full collection of relations~\eref{CWDVV_e} is equivalent to the associativity 
of the multiplication in the quantum cohomology of~$(X,\om)$.
For $(X,\om)\!=\!(\P^2,\om_{\FS})$, it is equivalent to Kontsevich's recursion
\cite[Claim~5.2.1]{KM} enumerating rational curves in~$\P^2$.
For $(X,\om)\!=\!(\P^n,\om_{\FS})$, \eref{CWDVV_e} is equivalent to the recursion of 
\cite[Theorem~10.4]{RT} enumerating rational curves in~$\P^n$.\\

\noindent
Proposition~\ref{Crecont_prp1} is an early example of \sf{reconstruction} 
in complex GW-theory.
Another example is Proposition~\ref{CTRR_prp} below that reduces 
the $g\!=\!0$ descendant GW-invariants~\eref{CGWdfn_e} to the \sf{primary} ones,
i.e.~those with $a_i\!=\!0$ for all $i\!\in\![\ell]$.
Its statement for smooth projective varieties is \cite[Corollary~1]{LeePand}.
We give a proof of this proposition assuming instead that 
the strata of $\ov\fM_{0,\ell}(B;J)$ are of the expected dimension.
This proof, motivated by an approach initiated in~\cite{Ionel} and formalized 
in~\cite{g2P2and3,obst},
adapts readily to semi-positive symplectic manifolds via Ruan-Tian's global 
inhomogeneous perturbations and with some technical care to arbitrary symplectic manifolds
via the virtual fundamental class constructions of~\cite{LT,FO}.\\

\noindent
For $\ell\!\in\!\Z^+$, $i,j\!\in\![\ell]$ distinct, and
$B,B_1,B_2\!\in\!H_2(X;\Z)$ with $B_1\!+\!B_2\!=\!B$, we denote~by
\BE{CD2dfn_e} D_{B_1i,B_2j}\subset\ov\fM_{0,\ell}(B;J)\EE
the subspace of maps~$u$ from domains $\Si_1\!\cup\!\Si_2$ such that 
\begin{enumerate}[label=$\bullet$,leftmargin=*]

\item $\Si_1,\Si_2$ are connected genus~0 curves with one point in common,

\item the $i$-th (resp.~$j$-th) marked $z_i$ (resp.~$z_j$) lies on~$\Si_1$ (resp.~$\Si_2$), and

\item the restrictions of~$u$ to~$\Si_1$ and~$\Si_2$ are of degrees~$B_1$ and~$B_2$,
respectively.

\end{enumerate}
Under ideal circumstances, $D_{B_1i,B_2j}$ is a union of smooth divisors in~$\ov\fM_{0,\ell}(B;J)$
intersecting transversely.
By~\cite{LT,FO}, $D_{B_1i,B_2j}$ carries a natural virtual fundamental class.
The meaning of the equation~\eref{CTRR_e} below is that the integral
of the product of the left-hand side with a descendant cohomology class~$\eta$,  
as in the integrand in~\eref{CGWdfn_e}, against $[\ov\fM_{0,\ell}(B;J)]^{\vir}$
equals the integral
of the product of the difference on the right-hand side with~$\eta$ 
against $[\ov\fM_{0,\ell}(B;J)]^{\vir}$ plus
the integral of~$\eta$ against the weighted sum of~$[D_{B_1i,B_2j}]^{\vir}$.

\begin{prp}\label{CTRR_prp}
Let $(X,\om)$ be a compact symplectic manifold.
For $\ell\!\in\!\Z^+$, $B\!\in\!H_2(X;\Z)$, $i,j\!\in\![\ell]$ distinct,
and $\mu\!\in\!H^2(X;\Q)$,
\BE{CTRR_e}\lr{\mu,B}\psi_i=\big(\ev_j^*\mu\!-\!\ev_i^*\mu\big)
+\!\sum_{B_1+B_2=B}\hspace{-.15in}\lr{\mu,B_2}\!\big[D_{B_1i,B_2j}\big]^{\vir}
\in H^2\big(\ov\fM_{0,\ell}(B;J);\Q\big)\,.\EE
\end{prp}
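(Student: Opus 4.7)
The plan is to deduce \eref{CTRR_e} from the standard three-point genus-$0$ topological recursion relation (TRR) on $\ov\fM_{0,\ell+1}(B;J)$ by adjoining an auxiliary $(\ell\!+\!1)$-th marked point and then pushing forward along the forgetful map
\[\pi\!:\ov\fM_{0,\ell+1}(B;J)\lra\ov\fM_{0,\ell}(B;J)\]
that drops $z_{\ell+1}$. The base TRR on $\ov\fM_{0,\ell+1}(B;J)$, applied with the three distinguished marked points $i$, $j$, $\ell\!+\!1$, asserts that the descendant class $\wt\psi_i\!\in\!H^2(\ov\fM_{0,\ell+1}(B;J);\Q)$ equals the sum of the virtual fundamental classes of all boundary divisors parametrizing maps whose domain is a genus-$0$ two-component curve carrying $z_i$ on one side and $z_j,z_{\ell+1}$ on the other, over all degree splittings $B_1\!+\!B_2\!=\!B$ and all stable distributions of the remaining marked points. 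This base identity is obtained by pulling back the linear equivalence of the three boundary points of $\ov\cM_{0,4}\!\cong\!\P^1$ under the stabilization morphism determined by $i,j,\ell\!+\!1$ and an auxiliary fourth point, then correcting by the pullback-of-$\psi$ relation, as in the pseudocycle arguments of \cite{Ionel,g2P2and3,obst}.

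Next, multiply the base TRR by $\ev_{\ell+1}^{*}\mu$ and apply $\pi_{*}$. For the left-hand side use the standard comparison $\wt\psi_i=\pi^{*}\psi_i+[D_{i,\ell+1}]^{\vir}$, where $D_{i,\ell+1}$ is the divisor on $\ov\fM_{0,\ell+1}(B;J)$ along which $z_i$ and $z_{\ell+1}$ lie on a single contracted degree-$0$ bubble. The projection formula together with $\pi_{*}\ev_{\ell+1}^{*}\mu=\lr{\mu,B}$ (the geometric content of the Divisor property for $\pi$) converts the first summand into $\lr{\mu,B}\psi_i$; since $\ev_{\ell+1}\!=\!\ev_i$ along $D_{i,\ell+1}$ and $\pi|_{D_{i,\ell+1}}$ is generically an isomorphism onto $\ov\fM_{0,\ell}(B;J)$, the second summand pushes forward to $\ev_i^{*}\mu$. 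For the right-hand side, any nondegenerate boundary divisor (whose degree-$B_2$ component carrying $z_j,z_{\ell+1}$ remains stable after $z_{\ell+1}$ is dropped) is $\pi$-fibered generically by $\P^1$ over the corresponding stratum of $D_{B_1i,B_2j}$, with fiber parametrizing the position of $z_{\ell+1}$, so fiber integration of $\ev_{\ell+1}^{*}\mu$ yields the factor $\lr{\mu,B_2}$. The lone degenerate case, where the bubble carrying $z_j,z_{\ell+1}$ has degree~$0$ and no other marked point, is contracted by~$\pi$; there $\ev_{\ell+1}\!=\!\ev_j$, and its pushforward contributes $\ev_j^{*}\mu$. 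Assembling all contributions gives
\[\lr{\mu,B}\psi_i + \ev_i^{*}\mu \;=\; \ev_j^{*}\mu + \!\!\!\sum_{B_1+B_2=B}\!\!\!\lr{\mu,B_2}\,[D_{B_1i,B_2j}]^{\vir},\]
which rearranges to \eref{CTRR_e}.

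The principal obstacle is the rigorous handling of the virtual fundamental class under $\pi$: one needs $[\ov\fM_{0,\ell+1}(B;J)]^{\vir}$ to be compatible with $\pi$ (to justify the projection formula at the virtual level) and with the boundary divisor inclusions (to justify the $\P^1$-fiber integration on the right-hand side). Under the expected-dimension hypothesis stated by the authors, these reduce to direct pseudocycle manipulations in the spirit of \cite{Ionel}; in general they require the virtual-class technology of \cite{LT,FO}. A secondary but delicate bookkeeping point is the careful treatment of the strata on $\ov\fM_{0,\ell+1}(B;J)$ that become unstable after $z_{\ell+1}$ is dropped and are thus contracted by~$\pi$; these are precisely what produce the evaluation terms $\ev_i^{*}\mu$ and $\ev_j^{*}\mu$ on opposite sides of \eref{CTRR_e}, and must not be double-counted among the nondegenerate strata.
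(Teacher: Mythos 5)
Your argument is correct, but it takes a genuinely different route from the paper's. The paper constructs, directly on $\ov\fM_{0,\ell}(B;J)$, a ``meromorphic'' section of $L^{\otimes d}$ (the $d$-th power of the universal tangent line at $z_i$, with $d=\lr{\mu,B}$) by normalizing representatives so that $z_i=0$ and the node or marked point separating $z_i$ from $z_j$ sits at $\infty$, and then taking the ratio of products of the intersection coordinates with a pseudocycle representing $\mu$; the identity \eref{CTRR_e} is read off from the zero/pole divisor of this section. You instead begin from the classical genus-$0$ topological recursion relation on $\ov\fM_{0,\ell+1}(B;J)$ with distinguished markings $i,j,\ell\!+\!1$, multiply by $\ev_{\ell+1}^*\mu$, push forward along the forgetful map $\pi$ dropping $z_{\ell+1}$, and use the comparison $\wt\psi_i=\pi^*\psi_i+[D_{i,\ell+1}]^{\vir}$ together with the divisor-axiom fiber integration. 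The bookkeeping checks out: on the left, $\pi^*\psi_i\cdot\ev_{\ell+1}^*\mu$ integrates over the $\P^1$-fiber to $\lr{\mu,B}\psi_i$ and $[D_{i,\ell+1}]^{\vir}\cdot\ev_{\ell+1}^*\mu$ pushes forward to $\ev_i^*\mu$ (since $\ev_{\ell+1}=\ev_i$ there and $\pi$ is generically one-to-one on that divisor); on the right, strata with the $\{z_j,z_{\ell+1}\}$-component of degree $B_2$ that survive the drop of $z_{\ell+1}$ fiber-integrate to $\lr{\mu,B_2}$ over the corresponding stratum of $D_{B_1i,B_2j}$, the lone stratum with $B_2=0$ and only $z_j,z_{\ell+1}$ on the bubble is contracted and contributes $\ev_j^*\mu$, and the degree-$0$ strata with additional markings on the $z_j$-side stay stable but have $\lr{\mu,0}=0$ and so cause no double-count. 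What the two approaches buy is also worth noting: your route leans on the base TRR as a black box and so is shorter modulo citation, while the paper's meromorphic-section construction is self-contained and, more to the point, is chosen because it transfers with only local modifications to the real moduli space $\ov\fM_{0,\ell}^{\phi}(B;J)$ (Theorem~\ref{RTRR_thm}), where the forgetful map drops a conjugate pair of markings and the fiber is a two-real-dimensional surface, no analogue of the pullback-from-$\R\ov\cM_{0,4}$ relation is as readily available, and the additional zero-contribution strata $\R D_{B_0,B'i}^-(M)$ and $\R E_i^-(M)$ need exactly the local coordinate analysis that the section construction provides.
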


\begin{proof}
We can assume that $\om(B)\!>\!0$.
By the linearity of both sides of~\eref{CTRR_e} in~$\mu$,
we can also assume that $\mu$ can be represented by a (generic) pseudocycle~$M$ in~$X$
as defined in Section~1 in~\cite{pseudo}.
Let $d\!=\!\lr{\mu,B}$ and $\hb\!\in\!\R^+$ be the minimal value of $\lr{\om,u_*[\P^1]}$
for a non-constant $J$-holomorphic map $u\!:\P^1\!\lra\!X$.\\

\noindent
We denote by $\wt\fM$ the set of representatives $u\!:\Si\!\lra\!X$
for the elements of $\ov\fM_{0,\ell}(B;J)$
such~that 
\begin{enumerate}[label=$\bullet$,leftmargin=*]

\item the $i$-th marked point $z_i\!=\!0$ on the irreducible component $\P^1_i\!\subset\!\Si$ 
containing it;

\item if the $j$-th marked point $z_j$ lies on~$\P^1_i$, then $z_j\!=\!\i$;

\item if $z_j$ does not lie on~$\P^1_i$, then the node of~$\P^1_i$ separating
it from the irreducible component of~$\Si$ containing~$z_j$ is the point $\i\!\in\!\P^1_i$;

\item the interior of the closed unit disk $\D\!\subset\!\P_i^1$ centered at $z_i\!=\!0$
contains no marked points other than~$z_i$ and no nodes;

\item either $\int_{\D}u^*\om\!=\!\hb/2$ and $\D\!\subset\!\P_i^1$ 
contains no marked points other than~$z_i$ and no nodes or
$\int_{\D}u^*\om\!<\!\hb/2$ and $\D\!\subset\!\P_i^1$ 
contains a marked point other than~$z_i$ or a node;

\end{enumerate}
see Figure~\ref{wtfM_fig}.\\

\begin{figure}
\begin{pspicture}(-.5,-.2)(10,2)
\psset{unit=.4cm}
\pscircle(8,2.5){2}\pscircle*(8,4.5){.2}\pscircle*(8,.5){.2}
\rput(8.9,5.2){\sm{$z_i\!=\!0$}}\rput(9,-.3){\sm{$z_j\!=\!\i$}}
\rput(8,2.5){$B$}
\pscircle(27,4){1.5}\pscircle(27,1){1.5}\pscircle*(27,2.5){.2}
\pscircle*(27,5.5){.2}\pscircle*(28.5,1){.2}\rput(27,3){\sm{$\i$}}
\rput(27.9,6.2){\sm{$z_i\!=\!0$}}\rput(29.2,.9){\sm{$z_j$}}
\rput(27,4.1){$B_1$}\rput(27,.9){$B_2$}
\end{pspicture}
\caption{A typical element of~$\wt\fM$ and a representative for a typical element 
of~$D_{B_1i,B_2j}$ in~$\wt\fM$.}
\label{wtfM_fig}
\end{figure}

\noindent
We call two elements of $\wt\fM$ \sf{equivalent} if they differ by a reparametrization
of the domain that restricts to the identity on the irreducible component~$\P^1_i$
containing~$z_i$.
Gromov's convergence induces a topology on the set~$\wh\fM$ of the resulting equivalence
classes of elements of~$\wt\fM$.
The action of~$S^1$ on~$\P^1_i$ induces a continuous action on~$\wh\fM$ so~that 
$\ov\fM_{0,\ell}(X,B)$ is the quotient $\wh\fM/S^1$ and
$$L\!\equiv\!\wh\fM\!\times_{S^1}\!\C\lra \ov\fM_{0,\ell}(X,B)$$
is the universal {\it tangent} line bundle for the $i$-th marked point.
Below we define a ``meromorphic" section of~$L^{\otimes d}$ and determine its zero/pole locus.\\

\noindent
A generic element $u\!:\P^1\!\lra\!X$ of $\wh\fM$ intersects $M$ transversely 
at points 
$$y_1^+(u),\ldots,y_{d_+}^+(u)\in\C^* \qquad\hbox{and}\qquad 
y_1^-(u),\ldots,y_{d_-}^-(u)\in\C^*$$
positively and negatively, respectively, so that $d\!=\!d_+\!-\!d_-$.
The resulting element
$$s\big([u]\big)\equiv 
\big[u,y_1^+(u)\ldots y_{d_+}^+(u)\big/y_1^-(u)\ldots y_{d_-}^-(u)\big]
\in L^{\otimes d}$$
depends only on the image $[u]\!\in\!\ov\fM_{0,\ell}(B;J)$ of $u\!\in\!\wh\fM$.
This construction induces a section of~$L^{\otimes d}$ over generic elements
of $\ov\fM_{0,\ell}(B;J)$.
This section extends to a ``meromorphic" section over all of~$\ov\fM_{0,\ell}(B;J)$.\\

\noindent
The section $s$ has a zero (resp.~pole) whenever $u$ meets $M$ positively (resp.~negatively)
at $z_i\!=\!0$ and a pole (resp.~zero) whenever $u$ meets~$M$ positively (resp.~negatively)
at $z_j\!=\!\i$.
It also has a pole (resp.~zero) whenever $u\!\in\!D_{B_1i,B_2j}$ meets~$M$
 at any point of~$\Si_2$ 
(in the terminology around~\eref{CD2dfn_e}) positively (resp.~negatively).
Thus,
$$s^{-1}(0)=\big(\ev_i^{-1}(M)\!-\!\ev_j^{-1}(M)\!\big)
-\!\sum_{B_1+B_2=B}\hspace{-.15in}\lr{\mu,B_2}\!D_{B_1i,B_2j}
\subset \ov\fM_{0,\ell}(B;J)\,.$$
Since $\lr{\mu,B}\psi_i=-c_1(L^{\otimes d})$,
this implies~\eref{CTRR_e}.
\end{proof}

\noindent
It is often convenient to combine GW-invariants into generating functions.
Suppose $\{e_i\}_{i\in[N]}$ is a basis of homogeneous elements for $H^*(X;\Q)$ with $e_1\!=\!1$,
$(g_{ij})_{i,j\in[N]}$ is 
the associated matrix for the intersection form on~$H^*(X;\Q)$, and 
$(g^{ij})_{i,j\in[N]}$ is its inverse as before.
Let $t_i$ and $t_{ai}$ with  $i\!\in\![N]$  and $a\!\in\!\Z^{\ge0}$ be formal variables with
$$t_it_j=(-1)^{|e_i||e_j|}t_jt_i, \qquad t_{ai}t_{bj}=(-1)^{|e_i||e_j|}t_{bj}t_{ai}\,.$$
Define  
\begin{alignat*}{2}
\ft&=\sum_{i=1}^N e_it_i, &\quad
\blr{e_{i_1}t_{i_1},\ldots,e_{i_{\ell}}t_{i_{\ell}}}_{\!g,B}^{\om} 
&=\blr{e_{i_1},\ldots,e_{i_{\ell}}}_{\!g,B}^{\om} t_{i_{\ell}}\!\ldots\!t_{i_1}, \\
\wt\ft&=\sum_{a=0}^{\i}\sum_{i=1}^N \tau_a(e_i)t_{ai},&\quad
\blr{\tau_{a_1}(e_{i_1})t_{a_1i_1},\ldots,\tau_{a_{\ell}}(e_{i_{\ell}})
t_{a_{\ell}i_{\ell}}}_{\!g,B}^{\om} 
&=\blr{\tau_{a_1}(e_{i_1}),\ldots,
\tau_{a_{\ell}}(e_{i_{\ell}})}_{\!g,B}^{\om} t_{a_{\ell}i_{\ell}}\!\ldots\!t_{a_1i_1}\,.
\end{alignat*} 
The (\sf{primary}) \sf{genus~0 GW-potential} and 
the \sf{descendant genus~g GW-potential} of $(X,\om)$ are the formal power series
$$\Phi^{\om}(\ft,q) =\sum_{B\in H_2(X)}\sum_{\ell\ge 0} \frac{1}{\ell!} 
\blr{\underset{\ell}{\underbrace{\ft,\ldots,\ft}}}_{\!0,B}^{\om} q^B
\quad\hbox{and}\quad
F_g^{\om}(\wt\ft,q) =\sum_{B\in H_2(X;\Z)}\sum_{\ell\ge0} \frac{1}{\ell!} 
\blr{\underset{\ell}{\underbrace{\wt\ft,\ldots,\wt\ft}}}_{\!g,B}^{\om} q^B\,,$$
respectively.
The \sf{full descendant GW-potential} is the formal power~series
$$F^{\om}(\wt\ft,q) = \sum_{g\ge0}F_g^{\om}(\wt\ft,q)\la^{2g-2}.$$
Thus, $\Phi^{\om}$ is the coefficient of $\la^{-2}$ in $F^{\om}$ with $t_{0i}\!=\!t_i$ 
and $t_{ai}\!=\!0$ for $a\!>\!0$.\\

\noindent
In light of~\ref{Ceff1_it}, \ref{Cgrad1_it}, and~\eref{g0basic_e},
\ref{Cstr_it} is equivalent to the \sf{string~differential equation}
\BE{CstrODE_e}\frac{\prt F^{\om}}{\prt t_{01}} 
= \frac{1}{2}\la^{-2}\!\!\!\!\!\!\sum_{1\le i,j\le N}\!\!\!\!\!g_{ij}t_{0j}t_{0i} 
+\sum_{a=0}^\infty \sum_{i=1}^N t_{(a+1)i}\frac{\prt F^{\om}}{\prt t_{ai}}\,.\EE
In light of~\ref{Ceff1_it}, \ref{Cgrad1_it}, and~\eref{g1basic_e},
\ref{Cdil_it} is equivalent to  the \sf{dilaton~differential equation}
$$\frac{\prt F^{\om}}{\prt t_{11}}= \frac{\chi(X)}{24}+
\bigg\{\la\frac{\prt }{\prt\la} +  \sum_{a=0}^\infty \sum_{i=1}^N t_{ai}
\frac{\prt}{\prt t_{ai}}\bigg\} F^{\om} \,.$$
The relations~\eref{CWDVV_e} are equivalent to the \sf{WDVV differential equations}
\begin{equation*}\begin{split}
&\sum_{j,k=1}^N\!\prt_{t_{i_1}}\prt_{t_{i_2}}\prt_{t_j}\Phi^{\om}\cdot g^{jk}
\prt_{t_k}\prt_{t_{i_3}}\prt_{t_{i_4}}\Phi^{\om} \\
&\hspace{1.5in}=(-1)^{|e_{i_1}|(|e_{i_2}|+|e_{i_3}|)}
\sum_{j,k=1}^N\!\prt_{t_{i_2}}\prt_{t_{i_3}}\prt_{t_j}\Phi^{\om}\cdot g^{jk} 
\prt_{t_k}\prt_{t_{i_1}}\prt_{t_{i_4}}\Phi^{\om}
\end{split}\end{equation*}
with $i_1,i_2,i_3,i_4\!=\!1,2,\ldots,N$;
see \cite[(4.13)]{KM}.

\section{Real Gromov-Witten theory}
\label{RGWs_sec}

\subsection{Moduli spaces}
\label{RMS_subs}

\noindent
A \sf{symmetric Riemann surface} $(\Si,\si,\fj)$  is a closed, but possibly nodal and disconnected,
Riemann surface~$(\Si,\fj)$ with an anti-holomorphic involution~$\si$. 
For example, there are two topological types of such involutions on~$\P^1$:
\BE{tauetadfn_e}\tau,\eta\!:\P^1\lra\P^1, \qquad \tau(z)=\frac{1}{\bar{z}}, \quad
\eta(z)=-\frac{1}{\bar{z}}\,.\EE
If $\phi$ is an involution on another space~$X$,
a map $u\!:\!\Si\!\lra\!X$ is \sf{$\phi$-real} (or just \sf{real}) 
if $u\!\circ\!\si\!=\!\phi\!\circ\!u$.\\

\noindent
Let $g\!\in\!\Z$ and $\ell\!\in\!\Z^{\ge0}$.
We denote by $\R\ov\cM_{g,\ell}$ the Deligne-Mumford moduli space
of stable closed connected, but possibly nodal, 
symmetric Riemann surfaces~$(\Si,\si,\fj)$ of arithmetic genus~$g$
with $\ell$~conjugate pairs $(z_i^+,z_i^-)$ of marked points.
This space is a smooth compact orbifold of dimension
\BE{RcMdim_e}\dim\R\ov\cM_{g,\ell}=3(g\!-\!1)\!+\!2\ell\,;\EE
it is empty if $g\!<\!0$ or $g\!+\!\ell\!\le\!1$.
The symmetric group~$\bS_{\ell}$ acts on~$\R\ov\cM_{g,\ell}$ by 
smooth automorphisms permuting pairs of conjugate marked points analogously 
to~\eref{bSellact_e}. 
For each $i\!\in\![\ell]$, let 
$$\th_i\!:\R\ov\cM_{g,\ell}\lra\R\ov\cM_{g,\ell}$$
be the automorphism interchanging the marked points
in the $i$-th conjugate pair.\\

\noindent
Similarly to the complex case, let
$$(\bE,\vph_{\bE})\lra \R\ov\cM_{g,\ell}$$
be the Hodge vector bundle of holomorphic differentials 
and the conjugation induced by the real structure on each surface.
We denote~by
\BE{bERdfn_e}\bE^{\R}\lra \R\ov\cM_{g,\ell}\EE
the $\vph_{\bE}$-fixed part of~$\bE$;
this is a real vector orbi-bundle of rank~$g$.
If $\ell\!>\!0$ and $g\!+\!\ell\!\ge\!2$, let
$$\ff_{\ell}^{\R}\!: \R\ov\cM_{g,\ell}\lra \R\ov\cM_{g,\ell-1}$$
be the natural forgetful morphism dropping the last conjugate pair of marked points.
The open subspace $\cU'\!\subset\!\R\ov\cM_{g,\ell}$ of the marked curves~$[\cC]$
such that the irreducible component of~$\cC$ carrying the marked point~$z_{\ell}^+$
also carries either 
\begin{enumerate}[label=$\bullet$,leftmargin=*]

\item at least three nodes and/or marked points $z_i^{\pm}$ with $i\!\in\![\ell\!-\!1]$ or

\item precisely one node and at least one other marked point

\end{enumerate}
satisfies the codimension conditions below~\eref{surjcond_e}.
We orient the kernel of $\nd\ff_{\ell}^{\R}|_{\cU'}$ by the position 
of the marked point~$z_{\ell}^+$ in the fiber (which is a nodal Riemann surface).
Similarly to~\eref{bEffl_e},
\BE{RbEffl_e} 
(\bE,\vph_{\bE})=\big\{\ff_{\ell}^{\R}\big\}\!^*(\bE,\vph_{\bE})\lra \R\ov\cM_{g,\ell}\EE
under the above assumptions.\\

\noindent
We denote by
\BE{Riogldfn_e}\io_{g,\ell}^{\C}\!:\R\ov\cM_{g-2,\ell+2}\lra\R\ov\cM_{g,\ell}
\quad\hbox{and}\quad
\io_{g,\ell}^E\!:\R\ov\cM_{g-1,\ell+1}\lra\R\ov\cM_{g,\ell}\EE
the immersion obtained by identifying the marked points~$z_{\ell+1}^+$ and~$z_{\ell+1}^-$ 
of each curve in the domain with~$z_{\ell+2}^+$ and~$z_{\ell+2}^-$, respectively,
to form a conjugate pair of nodes and
the immersion obtained by identifying the marked point~$z_{\ell+1}^+$
of each curve in the domain with~$z_{\ell+1}^-$ to form 
an isolated real node, called $E$-node in~\cite{RealGWsI} and elsewhere.
The first immersion is generically $4\!:\!1$ onto its image, while
the second is generically $2\!:\!1$.
For each element $P\!\in\!\cP(g\!-\!1,\ell)$
as in~\eref{PwtPdfn_e}, let
\BE{RioPdfn_e}\io_P^{\C}\!: 
\ov\cM_P^{\C}\!\equiv\!\R\ov\cM_{g_1,|I|+1}\!\times\!\R\ov\cM_{g_2,|J|+1}\lra \R\ov\cM_{g,\ell}\EE
be the immersion obtained by identifying the marked points~$z_{|I|+1}^+$ and~$z_{|I|+1}^-$
of the first Riemann surface in the domain with 
the marked points~$z_{|J|+1}^+$ and~$z_{|J|+1}^-$ of the second Riemann surface in the domain
to form a conjugate pair of nodes and by re-ordering the remaining pairs of marked points 
according to the bijection~\eref{Sperm_e}.
These three immersions are illustrated by the first three diagrams in Figure~\ref{Rimmers_fig}.\\

\begin{figure}
\begin{pspicture}(-.5,-6.5)(10,2)
\psset{unit=.4cm}
\psarc[linewidth=.06](4,2.5){2}{30}{330}\pscircle*(2.59,3.91){.2}\pscircle*(2.59,1.09){.2}
\pscircle*(5,4.23){.2}\pscircle*(5,.77){.2}
\rput(1.4,4.1){\sm{$z_{\ell+1}^+$}}\rput(1.6,1.1){\sm{$z_{\ell+1}^-$}}
\rput(6.2,4.4){\sm{$z_{\ell+2}^+$}}\rput(6.1,.55){\sm{$z_{\ell+2}^-$}}
\psline[linewidth=.03]{<->}(4,1)(4,4)\rput(4.5,2.5){\sm{$\si$}}
\psline[linewidth=.03]{->}(7,2.2)(11,2.2)\rput(9,2.9){\sm{$\io_{g,\ell}^{\C}$}}
\psarc[linewidth=.06](14,2.5){2}{135}{225}\pscircle*(12.59,3.91){.2}\pscircle*(12.59,1.09){.2}
\psline[linewidth=.06](12.59,3.91)(13.09,4.41)\psline[linewidth=.06](12.59,1.09)(13.09,.59)
\psline[linewidth=.06](12,4.5)(13.09,3.41)\psline[linewidth=.06](12,.5)(13.09,1.59)
\psarc[linewidth=.06](13.59,3.91){.7}{-135}{135}\psarc[linewidth=.06](13.59,1.09){.7}{-135}{135}
\psline[linewidth=.03]{<->}(14.8,1)(14.8,4)\rput(15.3,2.5){\sm{$\si$}}
\psarc[linewidth=.06](24,2.5){2}{90}{270}\pscircle*(24,4.5){.2}\pscircle*(24,.5){.2}
\psline[linewidth=.03](24,4.5)(25.8,4.5)\psline[linewidth=.03](24,.5)(25.8,.5)
\rput(24.2,5.3){\sm{$z_{\ell+1}^+$}}\rput(24.2,-.3){\sm{$z_{\ell+1}^-$}}
\psline[linewidth=.03]{<->}(24,1)(24,4)\rput(24.5,2.5){\sm{$\si$}}
\psline[linewidth=.03]{->}(27,2.2)(31,2.2)\rput(29,2.9){\sm{$\io_{g,\ell}^E$}}
\psarc[linewidth=.06](33.5,2.5){1.5}{45}{315}
\psline[linewidth=.03](34.56,3.56)(36.6,1.52)\psline[linewidth=.03](34.56,1.44)(36.6,3.48)
\pscircle*(35.62,2.5){.2}
\psline[linewidth=.03]{<->}(33.5,1.3)(33.5,3.7)\rput(34,2.5){\sm{$\si$}}
\psarc[linewidth=.06](1.5,-4.5){2}{-90}{90}\psarc[linewidth=.06](7,-4.5){2}{90}{270}
\pscircle*(2.91,-3.09){.2}\pscircle*(2.91,-5.91){.2}
\pscircle*(5.59,-3.09){.2}\pscircle*(5.59,-5.91){.2}
\rput(1.7,-3.45){\sm{$z_{|I|+1}^+$}}\rput(1.5,-5.45){\sm{$z_{|I|+1}^-$}}
\rput(7,-3.45){\sm{$z_{|J|+1}^+$}}\rput(7.1,-5.65){\sm{$z_{|J|+1}^-$}} 
\psline[linewidth=.03]{<->}(4,-6)(4,-3)\rput(4.5,-4.5){\sm{$\si$}}
\psline[linewidth=.03]{->}(7.5,-4.8)(11,-4.8)\rput(9.5,-4.1){\sm{$\io_P^{\C}$}}
\psarc[linewidth=.06](11.5,-4.5){2}{-90}{90}\psarc[linewidth=.06](14.32,-4.5){2}{90}{270}
\pscircle*(12.91,-3.09){.2}\pscircle*(12.91,-5.91){.2}
\psline[linewidth=.03]{<->}(14.8,-6)(14.8,-3)\rput(15.3,-4.5){\sm{$\si$}}
\rput(11.8,-4.6){\sm{$g_2$}}\rput(14.1,-4.6){\sm{$g_1$}}
\pscircle*(12.02,-2.57){.2}\pscircle*(12.02,-6.43){.2}   
\pscircle*(13.8,-2.57){.2}\pscircle*(13.9,-6.43){.2}
\rput(12.5,-2){\sm{$z_i^+$}}\rput(12.4,-7.2){\sm{$z_i^-$}}
\rput(14.1,-1.8){\sm{$z_j^+$}}\rput(14.1,-7.2){\sm{$z_j^-$}}
\rput(13.1,-8.5){\sm{$i\!\in\!I,\,j\!\in\!J$}}
\psarc[linewidth=.06](24,-2.5){2}{180}{360}\pscircle*(24,-4.5){.2}
\rput(24.4,-5.2){\sm{$z_{\ell+1}$}}
\pscircle*(22.27,-3.5){.2}\pscircle*(25.73,-3.5){.2}
\psline[linewidth=.03]{->}(27,-4.8)(31,-4.8)\rput(29,-4.1){\sm{$\io_{g,(I,J)}^E$}}
\psarc[linewidth=.06](33.5,-2.5){2}{180}{360}\pscircle*(33.5,-4.5){.2}
\psarc[linewidth=.06](33.5,-6.5){2}{0}{180}
\psline[linewidth=.03]{<->}(36,-6)(36,-3)\rput(36.5,-4.5){\sm{$\si$}}
\pscircle*(31.77,-3.5){.2}\pscircle*(31.77,-5.5){.2}
\pscircle*(35.23,-3.5){.2}\pscircle*(35.23,-5.5){.2}
\rput(32.7,-3.3){\sm{$z_i^+$}}\rput(34.5,-3.3){\sm{$z_j^-$}}
\rput(32.6,-5.6){\sm{$z_i^-$}}\rput(34.4,-5.7){\sm{$z_j^+$}}
\rput(33.5,-7.5){\sm{$i\!\in\!I,\,j\!\in\!J$}}
\psarc[linewidth=.06](6.5,-12.5){2}{90}{270}
\pscircle*(5.09,-11.09){.2}\pscircle*(5.09,-13.91){.2}
\psline[linewidth=.03]{<->}(6.5,-14)(6.5,-11)\rput(6,-12.5){\sm{$\si$}}
\psline[linewidth=.06](4,-12.59)(0,-10.59)\pscircle*(3,-12.09){.2}
\pscircle*(2,-11.59){.2}\pscircle*(1,-11.09){.2}
\rput(3.8,-10.3){\sm{$z_{|K|+1}^+$}}\rput(4,-14.5){\sm{$z_{|K|+1}^-$}}
\rput(1.8,-12.9){\sm{$z_{|I\sqcup J|+1}$}}
\psline[linewidth=.03]{->}(7.5,-12.8)(10.5,-12.8)\rput(9,-12){\sm{$\io_{\wt{P}}^{\C}$}}
\psarc[linewidth=.06](15.5,-12.5){2}{90}{270}
\psline[linewidth=.03]{<->}(15.5,-14)(15.5,-11)\rput(15,-12.5){\sm{$\si$}}
\psline[linewidth=.06](14.77,-12)(10.77,-10)\psline[linewidth=.06](14.77,-13)(10.77,-15)
\pscircle*(14.5,-10.77){.2}\pscircle*(14.5,-14.23){.2}
\pscircle*(13.77,-11.5){.2}\pscircle*(12.77,-11){.2}\pscircle*(11.77,-10.5){.2}
\pscircle*(13.77,-13.5){.2}\pscircle*(12.77,-14){.2}\pscircle*(11.77,-14.5){.2}
\rput(14,-10.1){\sm{$z_k^+$}}\rput(13.9,-14.9){\sm{$z_k^-$}}
\rput(12.3,-11.7){\sm{$z_i^+$}}\rput(12.4,-13.3){\sm{$z_i^-$}}
\rput(11.6,-9.6){\sm{$z_j^-$}}\rput(11.7,-15.3){\sm{$z_j^+$}}
\rput(10.4,-9.8){\sm{$g'$}}\rput(10.4,-15){\sm{$g'$}}\rput(16,-10.3){\sm{$g_0$}}
\rput(18,-12.5){\sm{\begin{tabular}{l}$i\!\in\!I$\\ $j\!\in\!J$\\ $k\!\in\!K$\end{tabular}}}
\psarc[linewidth=.06](24,-10.5){2}{180}{360}
\rput(21.6,-12.1){\sm{$z_{\ell+1}$}}\rput(24.8,-11){\sm{$z_{\ell+2}$}}
\pscircle*(22.27,-11.5){.2}\pscircle*(25.73,-11.5){.2}
\psline[linewidth=.03]{->}(27,-12.8)(31,-12.8)\rput(29,-12.1){\sm{$\io_{g,(I,J)}^{\C}$}}
\psarc[linewidth=.06](33.5,-11.09){2}{180}{360}\psarc[linewidth=.06](33.5,-13.91){2}{0}{180}
\pscircle*(32.09,-12.5){.2}\pscircle*(34.91,-12.5){.2}
\psline[linewidth=.03]{<->}(36,-13.5)(38,-11.5)\psline[linewidth=.03]{<->}(38,-13.5)(36,-11.5)
\rput(37.1,-12){\sm{$\si$}}
\pscircle*(31.57,-11.61){.2}\pscircle*(31.57,-13.39){.2}    
\pscircle*(35.43,-11.61){.2}\pscircle*(35.43,-13.39){.2}
\rput(32.4,-11.2){\sm{$z_i^+$}}\rput(34.8,-11.2){\sm{$z_j^-$}}
\rput(32.4,-13.7){\sm{$z_j^+$}}\rput(34.6,-13.7){\sm{$z_i^-$}}
\rput(33.5,-15.5){\sm{$i\!\in\!I,\,j\!\in\!J$}}
\end{pspicture}
\caption{Typical elements in the domains and images of the immersions
\eref{Riogldfn_e}-\eref{RioIJEdfn_e},
with the genus and marked points of each irreducible component of an image of
\eref{RioPdfn_e}-\eref{RioIJEdfn_e}  indicated next to~it.}
\label{Rimmers_fig}
\end{figure}

\noindent
For a Riemann surface $(\Si,\fj)$, let $\fD(\Si,\fj)\!\equiv\!(\wh\Si,\wh\fj,\si)$
be the symmetric Riemann surface given~by
$$\wh\Si=\{+,-\}\!\times\!\Si, \quad \wh\fj|_{\{\pm\}\times\Si}=\pm\fj,\quad
\si(\pm,z)=(\mp,z)~~\forall\,z\!\in\!\Si.$$
For disjoint subsets $I,J\!\subset\!\Z^+$ and a marked curve
$\cC\!\equiv\!(\Si,\fj,(z_i)_{i\in[|I\sqcup J|]})$, define
\BE{fDIJdfn_e1}\fD_{I,J}(\cC)=\big(\fD(\Si,\fj),(z_i^+,z_i^-)_{i\in[|I\sqcup J|]}\big)
\quad\hbox{with}~~
z_i^{\pm}=\begin{cases}(\pm,z_i),&\hbox{if}~i\!\in\!\io_{I,J}^{~-1}(I);\\
(\mp,z_i),&\hbox{if}~i\!\in\!\io_{I,J}^{~-1}(J).\end{cases}\EE
For each $\wt{P}\!\in\!\wt\cP(g,\ell)$ as in~\eref{PwtPdfn_e}, let
\BE{RiowtPdfn_e}\io_{\wt{P}}^{\C}\!: 
\ov\cM_{\wt{P}}^{\C}\!\equiv\!\ov\cM_{g',|I\sqcup J|+1}\!\times\!\R\ov\cM_{g_0,|K|+1}
\lra \R\ov\cM_{g,\ell}\EE
be the immersion sending $([\cC'],[\cC_0])$ to the equivalence class of 
the marked curve obtained by identifying the marked points
the marked points~$z_{|I\sqcup J|+1}^+$ and~$z_{|I\sqcup J|+1}^-$ of~$\fD_{I\sqcup\{\ell+1\},J}(\cC')$
with the marked points~$z_{|K|+1}^+$ and~$z_{|K|+1}^-$ of~$\cC_0$ 
to form a conjugate pair of nodes and by re-ordering the remaining pairs of marked points 
according to the bijection~\eref{Sperm_e} with~$(I,J)$ replaced by~$(I\!\sqcup\!J,K)$.
If \hbox{$[\ell]\!=\!I\!\sqcup\!J$}, let
\BE{RioIJEdfn_e}\io_{g,(I,J)}^{\C}\!: \ov\cM_{(g-1)/2,\ell+2}\lra  \R\ov\cM_{g,\ell}
\quad\hbox{and}\quad
\io_{g,(I,J)}^E\!: \ov\cM_{g/2,\ell+1}\lra  \R\ov\cM_{g,\ell}\EE
be  the immersion sending $[\cC']$ to the equivalence class of the marked curve obtained
by identifying the marked points~$z_{\ell+1}^+$ and~$z_{\ell+2}^+$
of~$\fD_{I\sqcup\{\ell+1\},J\sqcup\{\ell+2\}}(\cC')$
with~$z_{\ell+1}^-$ and~$z_{\ell+2}^-$, respectively, to form
a conjugate pair of nodes and
 the immersion sending $[\cC']$ to the equivalence class of the marked curve obtained
by identifying the marked point~$z_{\ell+1}^+$ of~$\fD_{I\sqcup\{\ell+1\},J}(\cC')$
with~$z_{\ell+1}^-$ to form an $E$-node.
These three immersions are illustrated by the last three diagrams in Figure~\ref{Rimmers_fig}.\\

\noindent
For each $i\!\in\![\ell]$, let 
$$\cL_i\lra \ov\cM_{g,\ell}\qquad\hbox{and}\qquad \cL_i\lra \R\ov\cM_{g,\ell}$$
be the universal tangent complex line orbi-bundles at the marked points~$z_i$ and~$z_i^+$, 
respectively.
In either case, let
\BE{cLRdfn_e}\big(\cL_i\!\otimes\!\ov{\cL_i}\big)^{\R}=
\big\{rv\!\otimes_{\C}\!v\!\in\!\cL_i\!\otimes_{\C}\!\ov{\cL_i}\!:
v\!\in\!\cL_i,\,r\!\in\!\R\big\}.\EE
This real line bundle is canonically oriented by the standard orientation of~$\R$.
There are canonical isomorphisms
\BE{Riogldisom_e}\cN\io_{g,\ell}^{\C}\approx\cL_{\ell+1}\!\otimes\!\cL_{\ell+2}
\lra\R\ov\cM_{g-2,\ell+2}  ~~\hbox{and}~~
\cN\io_{g,\ell}^E\approx \big(\cL_{\ell+1}\!\otimes\!\ov{\cL_{\ell+1}}\big)^{\R}
\lra\R\ov\cM_{g-1,\ell+1}  \EE
of the normal bundles to the immersions in~\eref{Riogldfn_e}.
With the assumptions as in~\eref{RioPdfn_e}-\eref{RioIJEdfn_e},
\BE{RioPprp_e}\begin{aligned}
\cN\io_P^{\C}&\approx\pi_1^*\cL_{|I|+1}\!\otimes\!\pi_2^*\cL_{|J|+1}
\lra\ov\cM_P^{\C},
&\cN\io_{g,(I,J)}^{\C}&\approx\cL_{\ell+1}\!\otimes\!\ov{\cL_{\ell+2}}
 \lra\ov\cM_{(g-1)/2,\ell+1},\\
\cN\io_{\wt{P}}^{\C}&\approx\pi_1^*\cL_{|I\sqcup J|+1}\!\otimes\!\pi_2^*\cL_{|K|+1}
\lra\ov\cM_{\wt{P}}^{\C},
&\cN\io_{g,(I,J)}^E&\approx \big(\cL_{\ell+1}\!\otimes\!\ov{\cL_{\ell+1}}\big)^{\R}
 \lra\ov\cM_{g/2,\ell+1},
\end{aligned}\EE
where $\pi_1,\pi_2$ are the component projections from the domains in~\eref{RioPdfn_e}
and~\eref{RiowtPdfn_e}.
Via the isomorphisms in~\eref{Riogldisom_e} and~\eref{RioPprp_e},
the normal bundles to the six immersions in~\eref{Riogldfn_e}-\eref{RioIJEdfn_e} 
inherit orientations from the complex orientation of~$\cL_i$ and 
the canonical orientation of~$(\cL_{\ell+1}\!\otimes\!\ov{\cL_{\ell+1}})^{\R}$.\\

\noindent
Let $(X,\om,\phi)$ be a compact real symplectic manifold of real dimension~$2n$ 
with $n\!\not\in\!2\Z$.
Define
\begin{gather*}
H_2(X;\Z)_{\om}^{\phi}=\big\{B\!\in\!H_2(X;\Z)_{\om}\!:\phi_*(B)\!=\!-B\big\},
\quad 
H^*_{\pm}(X;\Q)=\big\{\mu\!\in\!H^*(X;\Q)\!:\phi^*\mu\!=\!\pm\!\mu\big\},\\
\cJ_{\om}^{\phi}=\big\{J\!\in\!\cJ_{\om}\!:\phi^*J\!=\!-J\big\}.
\end{gather*}
For $g,\ell\!\in\!\Z^{\ge0}$, $B\!\in\!H_2(X;\Z)$,  and $J\!\in\!\cJ_{\om}^{\phi}$,
we denote by $\ov\fM_{g,\ell}^{\phi}(B;J)$ 
the moduli space of stable real $J$-holomorphic degree~$B$ maps 
from closed connected, but possibly nodal, symmetric Riemann surfaces of arithmetic genus~$g$
with $\ell$~conjugate pairs of marked points.
This space is empty if \hbox{$B\!\not\in\!H_2(X;\Z)_{\om}^{\phi}$} or
$B\!=\!0$ and $g\!+\!\ell\!\le\!1$.\\

\noindent
For each $i\!=\!1,\ldots,\ell$, let 
$$\ev_i\!:\ov\fM_{g,\ell}^{\phi}(B;J)\lra X \qquad\hbox{and}\qquad
\psi_i\in H^2\big(\ov\fM_{g,\ell}^{\phi}(B;J);\Q\big)$$
be the natural evaluation map at the marked point~$z_i^+$ and
the Chern class of the universal cotangent line bundle for this marked point, 
respectively.
Let 
$$\wt\th_i\!:\ov\fM_{g,\ell}^{\phi}(B;J)\lra\ov\fM_{g,\ell}^{\phi}(B;J)$$
be the automorphism interchanging the marked points in the $i$-th conjugate pair.
It satisfies
\BE{wtthiprp_e} \ev_j\!\circ\!\wt\th_i=\begin{cases}
\phi\!\circ\!\ev_i,&\hbox{if}~j\!=\!i\,;\\
\ev_j,&\hbox{if}~j\!\neq\!i\,;\end{cases}
\qquad\hbox{and}\qquad
\wt\th_i^*\psi_j=\begin{cases}-\psi_i,&\hbox{if}~j\!=\!i\,;\\
\psi_j,&\hbox{if}~j\!\neq\!i\,.\end{cases}\EE
The symmetric group~$\bS_{\ell}$ acts on~$\ov\fM_{g,\ell}^{\phi}(B;J)$
by permuting pairs of conjugate marked points analogously to~\eref{bSellact_e}.
This action again satisfies~\eref{bSellact_e2b}.\\

\noindent
By \cite[Theorem~1.4]{RealGWsI}, a real orientation on~$(X,\om,\phi)$ endows
the moduli space $\ov\fM_{g,\ell}^{\phi}(B;J)$ with a virtual fundamental class
of dimension/degree 
\BE{RfMdim_e}\begin{split}
\dim\big[\ov\fM_{g,\ell}^{\phi}(B;J)\big]^{\vir}
&=(1\!-\!g)(n\!-\!3)\!+\!2\ell\!+\!\blr{c_1(X,\om),B}\\
&=\dim\R\ov\cM_{g,\ell}
\!+\!\big(n(1\!-\!g)\!+\!\blr{c_1(X,\om),B}\!\big)\in 2\Z\,;
\end{split}\EE
see also \cite[Section~3.3]{RealGWsGeom}.
This class is preserved by the $\bS_{\ell}$-action.
For \hbox{$a_1,\ldots,a_{\ell}\!\in\!\Z^{\ge0}$} and 
\hbox{$\mu_1,\ldots,\mu_{\ell}\!\in\!H^*(X;\Q)$},
let 
\BE{RGWdfn_e}
\blr{\tau_{a_1}(\mu_1),\ldots,\tau_{a_{\ell}}(\mu_{\ell})}_{\!g,B}^{\!\om,\phi} 
=\int_{[\ov\fM_{g,\ell}^{\phi}(B;J)]^{\vir}}\!\! 
\psi_1^{a_1}\!\big(\ev_1^*\mu_1\big)\ldots\psi_{\ell}^{a_{\ell}}\!\big(\ev_{\ell}^*\mu_{\ell}\big)\EE
be the associated real \sf{descendant GW-invariant}.
This number is independent of the choice of~$J\!\in\!\cJ_{\om}^{\phi}$.\\

\noindent
If $g\!+\!\ell\!\ge\!2$, let
\BE{Rffdfn_e}\ff:\ov\fM_{g,\ell}^{\phi}(B;J)\lra \R\ov\cM_{g,\ell}\EE
the natural forgetful morphism to the corresponding Deligne-Mumford moduli space.
It satisfies
\BE{Rffvp_e}\ff\!\circ\!\vp\!=\!\vp\!\circ\!\ff,\ff\!\circ\!\wt\th_i\!=\!\th_i\!\circ\!\ff:
\ov\fM_{g,\ell}^{\phi}(B;J)\lra \R\ov\cM_{g,\ell} \qquad\forall\,
\vp\!\in\!\bS_{\ell},\,i\!\in\![\ell].\EE
We denote by
$$\pi_{\R\ov\cM_{g,\ell}},\pi_{X^{\ell}}\!:
\R\ov\cM_{g,\ell}\!\times\!X^{\ell}\lra\R\ov\cM_{g,\ell},X^{\ell}$$
the component projection maps.
Using Poincar\'e Duality on~$\R\ov\cM_{g,\ell}$ and $\R\ov\cM_{g,\ell}\!\times\!X^{\ell}$,
we define 
\BE{RIdfn_e}
\cI_{g,\ell,B}^{\om,\phi}\!: 
H^*(X;\Q)^{\otimes\ell}\lra \wt{H}^*\big(\R\ov\cM_{g,\ell};\Q\big)
\quad\hbox{and}\quad
C_{g,\ell,B}^{\om,\phi}\in \wt{H}^*\big(\R\ov\cM_{g,\ell}\!\times\!X^{\ell};\Q\big)\EE
by requiring that 
\begin{gather*}
\int_{\R\ov\cM_{g,\ell}}\!\!\ga
\cI_{g,\ell,B}^{\om,\phi}\big(\mu_1,\ldots,\mu_{\ell}\big)=
\int_{[\ov\fM_{g,\ell}^{\phi}(B;J)]^{\vir}}\!\! (\ff^*\ga\big)
\big(\ev_1^*\mu_1\big)\ldots\!\big(\ev_{\ell}^*\mu_{\ell}\big) \quad\hbox{and}\\
\int_{\R\ov\cM_{g,\ell}\times X^{\ell}}\!\! 
\big(\pi_{\R\ov\cM_{g,\ell}}^*\!\ga\big)C_{g,\ell,B}^{\om,\phi}
\big(\pi_{X^{\ell}}^*(\mu_1\!\times\!\ldots\!\times\!\mu_{\ell})\!\big)
=\int_{[\ov\fM_{g,\ell}^{\phi}(B;J)]^{\vir}}\!\!(\ff^*\ga\big) 
\big(\ev_1^*\mu_1\big)\ldots\!\big(\ev_{\ell}^*\mu_{\ell}\big)
\end{gather*}
for all $\mu_i\!\in\!H^*(X;\Q)$  and $\ga\!\in\!H^*(\R\ov\cM_{g,\ell};\Q)$.
The linear maps~$\cI_{g,\ell,B}^{\om,\phi}$ and 
the correspondences $C_{g,\ell,B}^{\om,\phi}$ in~\eref{RIdfn_e} 
are independent of the choice of \hbox{$J\!\in\!\cJ_{\om}^{\phi}$}.
If $g\!<\!0$ or $g\!+\!\ell\!\le\!1$, we set $\cI_{g,\ell,B}^{\om,\phi}\!=\!0$.

\subsection{Properties of invariants I}
\label{RGWs_subs1}

\noindent
The real descendant GW-invariants~\eref{RGWdfn_e} satisfy the following properties:
\begin{enumerate}[label=$\R{\arabic*}\!\!$,ref=$\R{\arabic*}$,leftmargin=*]

\item\label{Reff1_it} ({\it Effectivity I}):
$\lr{\tau_{a_1}(\mu_1),\ldots,\tau_{a_{\ell}}(\mu_{\ell})\!}_{\!g,B}^{\om,\phi}\!=\!0$ 
if $B\!\not\in\!H_2(X;\Z)_{\om}^{\phi}$, or $B\!=\!0$ and $g\!+\!{\ell}\!\le\!1$,
or \hbox{$\mu_i\!\in\!H^*_{(-1)^{a_i}}\!(X;\Q)$} for some $i\!\in\![\ell]$;

\item\label{Rgrad1_it} ({\it Grading I}): 
$\lr{\tau_{a_1}(\mu_1),\ldots,\tau_{a_{\ell}}(\mu_{\ell})\!}_{\!g,B}^{\om,\phi}\!=\!0$ if
$$\sum_{i=1}^{\ell}\big(2a_i\!+\!|\mu_i|\big)\neq 
(1\!-\!g)(n\!-\!3)+2\ell+\blr{c_1(X,\om),B};$$

\item\label{Rstr_it}  ({\it String}): 
$\lr{\tau_{a_1}(\mu_1),\ldots,\tau_{a_{\ell}}(\mu_{\ell}),\tau_0(1)\!}_{\!g,B}^{\om,\phi}\!=\!0$;

\item\label{Rdil_it}  ({\it Dilaton}): 
$\blr{\tau_{a_1}(\mu_1),\ldots,\tau_{a_{\ell}}(\mu_{\ell}),\tau_1(1)\!}_{\!g,B}^{\om,\phi}
=2(g\!-\!1\!+\!{\ell})\lr{\tau_{a_1}(\mu_1),
\ldots,\tau_{a_{\ell}}(\mu_{\ell})\!}_{\!g,B}^{\om,\phi}$;

\item\label{RdivII_it}  ({\it Divisor I}): if $\mu_{\ell+1}\!\in\!H_-^2(X;\Q)$,
\begin{equation*}\begin{split}
&{}\hspace{-.5in}
\blr{\tau_{a_1}(\mu_1),\ldots,\tau_{a_{\ell}}(\mu_{\ell}),\tau_0(\mu_{\ell+1})\!}_{\!g,B}^{\om,\phi} 
=\lr{\mu_{\ell+1},B}\blr{\tau_{a_1}(\mu_1),\ldots,\tau_{a_{\ell}}(\mu_{\ell})\!}_{\!g,B}^{\om,\phi}\\
&\qquad+2\!\!\sum_{\begin{subarray}{c}1\le i\le\ell\\ a_i>0\end{subarray}}
\!\!\!\blr{\tau_{a_1}(\mu_1),\ldots,\tau_{a_{i-1}}(\mu_{i-1}),
\tau_{a_i-1}(\mu_i\mu_{\ell+1}),\tau_{a_{i+1}}(\mu_{i+1}),\ldots, 
\tau_{a_{\ell}}(\mu_{\ell})\!}_{\!g,B}^{\om,\phi}.
\end{split}\end{equation*}

\setcounter{saveenumi}{\arabic{enumi}}
\end{enumerate}
The {\it Effectivity} properties above in the first two cases and in Section~\ref{RGWs_subs2} 
follow immediately 
from the moduli space $\ov\fM_{g,\ell}^{\phi}(B;J)$ being empty 
if either \hbox{$B\!\not\in\!H_2(X;\Z)_{\om}^{\phi}$} or $B\!=\!0$ and $g\!+\!\ell\!\le\!1$.
The {\it Effectivity} property above in the third case follows from~\eref{wtthiprp_e}
and the fact that~$\wt\th_i$ reverses the orientation of 
the moduli space~$\ov\fM_{g,\ell}^{\phi}(B;J)$.
The ${\it Grading}$ properties above and in Section~\ref{RGWs_subs2}  
are consequences of~\eref{RfMdim_e}.
The vanishing in~\ref{Rstr_it} is immediate from the third case in~\ref{Reff1_it}.\\

\noindent
The proofs of \ref{Rdil_it} and \ref{RdivII_it} are similar to the complex case.
Suppose first that either $B\!\neq\!0$ or $g\!+\!\ell\!\ge\!2$ so that the forgetful
morphism
\BE{Rwtffell_e}\wt\ff_{\ell+1}^{\R}\!:
\ov\fM_{g,\ell+1}^{\phi}(B;J)\lra\ov\fM_{g,\ell}^{\phi}(B;J)\EE
dropping the last conjugate pair of marked points is well-defined.
For $i\!\in\![\ell]$, let 
\BE{EDipmdfn_e}D_i^{\pm} \subset \ov\fM_{g,\ell+1}^{\phi}(B;J)\EE
be the subspace of maps from domains $\Si$ so that one of the irreducible components~$\Si_i$ 
of~$\Si$
is~$\P^1$ which has precisely one node, carries only the marked points~$z_i^+$
and $z_{\ell+1}^{\pm}$, and is contracted by the~map.
The forgetful morphism~\eref{Rwtffell_e} restricts to an isomorphism
\BE{Dipmisom_e} D_i^{\pm}\approx \ov\fM_{g,\ell}^{\phi}(B;J)\,.\EE
The (virtual) normal bundle $\cN D_i^{\pm}$ of $D_i^{\pm}$ in 
$\ov\fM_{g,\ell+1}^{\phi}(B;J)$
is canonically isomorphic to the complex line bundle of the smoothings of 
the above node of~$\Si_i$, as in \cite[Lemma~5.2]{RealEnum}.
By \cite[Corollary~3.17]{RealGWsGeom},
the sign of the isomorphism~\eref{Dipmisom_e} with respect to the orientation on~$D_i^{\pm}$
determined by the orientations of $\ov\fM_{g,\ell+1}^{\phi}(B;J)$ and $\cN D_i^{\pm}$ 
is~$\pm1$.\\

\noindent
By the same reasoning as in the complex case,
\BE{psipull_e} \psi_i\big|_{D_i^{\pm}},\psi_{\ell+1}\big|_{D_i^{\pm}}=0,\quad
\psi_i=\big\{\wt\ff_{\ell+1}^{\R}\big\}^{\!*}\psi_i\!+\![D_i^+]^{\vir}\!+\![D_i^-]^{\vir}
\in H^2\big(\ov\fM_{g,\ell+1}^{\phi}(B;J);\Q\big);\EE
the meaning of the last identity in~\eref{psipull_e} is analogous to that 
of~\eref{CTRR_e}, as explained above Proposition~\ref{CTRR_prp}.
Under the identification~\eref{Dipmisom_e}, 
$$c_1(\cN D_i^{\pm})\approx-\psi_i, \quad 
\big\{\wt\ff_{\ell+1}^{\R}\!\big\}^{\!*}\psi_j\big|_{D_i^{\pm}}\approx\psi_j~~\forall\,j\!\in\![\ell],
\quad
\psi_j\big|_{D_i^{\pm}}\approx\psi_j~~\forall\,j\!\in\![\ell]\!-\!\{i\}\,.$$
Along with~\eref{psipull_e}, this gives
\begin{gather*}
\blr{\tau_{a_1}(\mu_1),\ldots,\tau_{a_{\ell}}(\mu_{\ell}),\tau_1(1)}_{\!g,B}^{\!\om,\phi}
=\int_{[\ov\fM_{g,\ell+1}^{\phi}(B;J)]^{\vir}}\!\!\!
\big\{\wt\ff_{\ell+1}^{\R}\big\}^{\!*}\!\big( 
\psi_1^{a_1}\!\big(\ev_1^*\mu_1\big)\ldots\psi_{\ell}^{a_{\ell}}\!\big(\ev_{\ell}^*\mu_{\ell}\big)
\!\big)\psi_{\ell+1},\\
\begin{split}
&\blr{\tau_{a_1}(\mu_1),\ldots,\tau_{a_{\ell}}(\mu_{\ell}),\tau_0(\mu)}_{\!g,B}^{\!\om,\phi} 
=\int_{[\ov\fM_{g,\ell+1}^{\phi}(B;J)]^{\vir}}\!\!\!
\big\{\wt\ff_{\ell+1}^{\R}\big\}^{\!*}\!\big( 
\psi_1^{a_1}\!\big(\ev_1^*\mu_1\big)\ldots\psi_{\ell}^{a_{\ell}}\!\big(\ev_{\ell}^*\mu_{\ell}\big)
\!\big)\big(\ev_{\ell+1}^{~*}\mu\big)\\
&\hspace{1in}
+\!\!\sum_{\begin{subarray}{c}1\le i\le\ell\\ a_i>0\end{subarray}}
\int_{[\ov\fM_{g,\ell}^{\phi}(B;J)]^{\vir}}\!\!
\bigg(\prod_{j=1}^{i-1}\!\psi_j^{a_j}\!\big(\ev_j^*\mu_j\big)\!\!\!\bigg)
\psi_i^{a_i-1}\!\big(\ev_i^*(\mu_i(\mu\!-\!\phi^*\mu)\!)\!\big)
\bigg(\prod_{j=i+1}^{\ell}\!\!\!\!\psi_j^{a_j}\!\big(\ev_j^*\mu_j\big)\!\!\!\bigg).
\end{split}
\end{gather*}

\vspace{.18in}

\noindent
The identities~\ref{Rdil_it} and~\ref{RdivII_it} follow from
the above two equations,
\BE{psipull_e5}\blr{c_1(\cL_{\ell+1}^{~*}|_{\Si}\!\otimes\!T\Si),\Si}=2\ell,
\qquad \lr{\ev_{\ell+1}^{~*}\mu,\Si}=\lr{\mu,B},\EE
where $\Si$ is a generic fiber of the forgetful morphism~$\wt\ff_{\ell+1}^{\R}$ 
in~\eref{Rwtffell_e} oriented by the position of the marked point~$z_{\ell+1}^+$
and \hbox{$\cL_{\ell+1}\!\lra\!\ov\fM_{g,\ell+1}^{\phi}(B;J)$} is the universal tangent line bundle
at~$z_{\ell+1}^+$,
and from the compatibility, as in~\eref{fiberint_e}, of the virtual fundamental classes for 
$\ov\fM_{g,\ell+1}^{\phi}(B;J)$ and $\ov\fM_{g,\ell}^{\phi}(B;J)$
with this fiber orientation.
We justify~\eref{psipull_e5} below.\\

\noindent
The second identity in~\eref{psipull_e5} follows from the fact that
the intersection of any degree~$B$ curve in~$X$
with a generic representative for the Poincare dual of~$\mu$ is $\lr{\mu,B}$.
The pairing 
\BE{psipull_e7}\cL_{\ell+1}^{~*}|_{\Si}\!\otimes\!T\Si\lra\C, \qquad \psi\!\otimes\!v\lra\psi(v),\EE
vanishes transversely at the marked points $z_i^{\pm}\!\in\!\Si$ with $i\!\in\![\ell]$
(which correspond to the intersections of~$\Si$ with~$D_i^{\pm}$).
It also vanishes on the real locus~$\Si^{\si}$ of~$\si$
(which corresponds to the intersection of~$\Si$ with the subspace of
$\ov\fM_{g,\ell+1}^{\phi}(B;J)$
of maps from domains~$\Si'$ so that one of the irreducible components of~$\Si'$
is~$\P^1$ which has precisely one node, carries only the marked points~$z_{\ell+1}^{\pm}$, 
and is contracted by the~map).
Each of the $2\ell$ points $z_i^{\pm}\!\in\!\Si$ is a positive zero
of the pairing~\eref{psipull_e7}. 
The fixed locus~$\Si^{\si}$ is a disjoint union of circles with a trivial normal bundle.
A small deformation of the section of $\cL_{\ell+1}^{~*}|_{\Si}\!\otimes\!T\Si$
given by~\eref{psipull_e7} does not vanish near~$\Si^{\si}$.
Thus, $\Si^{\si}$ does not contribute to the first number in~\eref{psipull_e5}.
This establishes the first identity in~\eref{psipull_e5}.\\

\noindent
The remaining cases of \ref{Rdil_it} and~\ref{RdivII_it} are $B\!=\!0$ and
either $(g,\ell)\!=\!(0,1)$ or $(g,\ell)\!=\!(1,0)$.
The right-hand sides of the equations in~\ref{Rdil_it} and~\ref{RdivII_it}
vanish in either case.
The left-hand sides are integrals against the virtual classes~of   
\BE{psipull_e9}\ov\fM_{0,2}^{\phi}(0;J)\approx\R\ov\cM_{0,2}\!\times\!X^{\phi}
\qquad\hbox{and}\qquad
\ov\fM_{1,1}^{\phi}(0;J)\approx\R\ov\cM_{1,1}\!\times\!X^{\phi}\,.\EE
With $\pi_1,\pi_2$ denoting the projections of the right-hand sides above
to the two factors,
$$\psi_i\approx\pi_1^*\psi_i  \qquad\hbox{and}\qquad 
\ev_i^*\mu_i\approx\pi_2^*\big(\mu_i|_{X^{\phi}}\!\big)$$
under these identifications. 
Since $\R\ov\cM_{0,2}\!\approx\!S^1$, $\psi_1,\psi_2\!=\!0$ if $(g,\ell)\!=\!(0,1)$.
Thus, the left-hand sides of the equations in~\ref{Rdil_it} and~\ref{RdivII_it} 
also vanish in this case.
Since the connected two-dimensional moduli space~$\R\ov\cM_{1,1}$ is not orientable,
the left-hand side of the equation in~\ref{Rdil_it} vanishes in the $(g,\ell)\!=\!(1,0)$
case as well.\\

\noindent
Since the obstruction bundle for $\ov\fM_{1,1}^{\phi}(0;J)$ is isomorphic to
$\pi_1^*(\bE^{\R})^*\!\otimes\!\pi_2^*TX^{\phi}$ under the second identification 
in~\eref{psipull_e9}, 
\BE{psipull_e11}\begin{split}
\blr{\tau_0(\mu)}_{\!1,0}^{\!\om,\phi}
&=\pm\blr{(\pi_2^*\mu)e(\pi_1^*(\bE^{\R})^*\!\otimes\!\pi_2^*TX^{\phi}),
\R\ov\cM_{1,1}\!\times\!X^{\phi}}\\
&=\pm\blr{e(\pi_1^*(\bE^{\R})^*\!\otimes\!\pi_2^*TX^{\phi}),
\R\ov\cM_{1,1}\!\times\!(M\!\cap\!X^{\phi})}
\end{split}\EE
if $M$ is a generic representative for the Poincar\'e dual of~$\mu$.
For dimensional reasons,
the restriction of~$TX^{\phi}$ to~$M\!\cap\!X^{\phi}$
contains a trivial rank~2 subbundle;
we denote its complement by~$V$.
By~\eref{psipull_e11},
$$\blr{\tau_0(\mu)}_{\!1,0}^{\!\om,\phi}
=\pm\blr{e\big(\!(\bE^{\R})^*\!\oplus\!(\bE^{\R})^*\big),\R\ov\cM_{1,1}}\blr{e(V),M\!\cap\!X^{\phi}}.$$
Since $\R\ov\cM_{1,1}$ is not orientable (resp.~$V$ is oriented and of odd rank), 
the first (resp.~second) factor on the right-hand side above vanishes. 
This establishes the remaining case of~\ref{RdivII_it}.

\subsection{Properties of invariants II}
\label{RGWs_subs2}

\noindent
For $\wt{P}\!\in\!\wt\cP(g,\ell)$ as in~\eref{PwtPdfn_e} and $B\!\in\!H_2(X;\Z)$, let
$$\cP_{\wt{P}}^{\phi}(B)=
\big\{\!(B',B_0)\!\in\!H_2(X;\Z)^2\!:
B'\!-\!\phi_*(B')\!+\!B_0\!=\!B\big\}.$$
For a complex line bundle $L\!\lra\!X$, 
$g\!\in\!\Z$, disjoint subsets $I,J\!\subset\!\Z^+$, 
and \hbox{$B\!\in\!H_2(X;\Z)$}, define
\begin{gather}\notag
\cI^{\om,\phi,L}_{g,(I,J),B}\!:
H^*(X;\Q)^{|I\sqcup J|}\lra H^*\big(\ov\cM_{g,|I\sqcup J|};\Q\big),\\
\label{IomphiLdfn_e}
\cI^{\om,\phi,L}_{g,(I,J),B}\big(\mu_1,\ldots,\mu_{|I\sqcup J|}\big)
=(-1)^{\lr{c_1(L),\phi_*(B)}}\cI^{\om}_{g,|I\sqcup J|,B}
\big(\mu_1',\ldots,\mu_{|I\sqcup J|}'\big),\\
\notag
\hbox{where}\qquad \mu_i'=\begin{cases}
\mu_i,&\hbox{if}~i\!\in\!\io_{I,J}^{\,-1}(I);\\
-\phi^*\mu_i,&\hbox{if}~i\!\in\!\io_{I,J}^{\,-1}(J).
\end{cases}\end{gather}

\noindent
Each moduli space $\R\ov\cM_{0,\ell}$ with $\ell\!\ge\!2$ is oriented as described
in \cite[Section~3.5]{RealGWsGeom}.
For the purposes of the $B\!=\!0$ case of~\ref{RfcI_it} and
the $g\!=\!0$ case of~\ref{RMapPt_it} below,
we identify $\wt{H}^*(\R\ov\cM_{0,\ell};\Q)$ with $H^*(\R\ov\cM_{0,\ell};\Q)$
via this orientation.
The identities in~\ref{RfcII_it}, \ref{RGenusRed_it}, and~\ref{RSplit_it} 
depend on the choices of orientations for 
a generic fiber of the forgetful morphism~$\ff_{\ell+1}^{\R}$
and for the normal bundle of the immersions in~\eref{Riogldfn_e}-\eref{RioIJEdfn_e};
these are specified in Section~\ref{RMS_subs}.\\

\noindent
The linear maps~\eref{RIdfn_e} satisfy analogues of Kontsevich-Manin's axioms of 
\cite[Section~2]{KM}:
\begin{enumerate}[label=$\R{\arabic*}\!\!$,ref=$\R{\arabic*}$,leftmargin=22pt]
\setcounter{enumi}{\arabic{saveenumi}}

\item\label{Reff2_it} ({\it Effectivity II}): 
$\cI_{g,\ell,B}^{\om,\phi}\!=\!0$ if $B\!\not\in\!H_2(X;\Z)_{\om}^{\phi}$;

\item\label{Rcov_it} ({\it Covariance}): 
the map $\cI_{g,\ell,B}^{\om,\phi}$ is $\bS_{\ell}$-equivariant and
$${}\hspace{-.3in}
\cI_{g,\ell,B}^{\om,\phi}\big(\mu_1,\ldots,\mu_{i-1},\phi^*\mu_i,\mu_{i+1},\ldots,\mu_{\ell}\big)
=\th_i^*\,\cI_{g,\ell,B}^{\om,\phi}(\mu_1,\ldots,\mu_{\ell})
\quad\forall\,\mu_1,\ldots,\mu_{\ell}\!\in\!H^*(X;\Q);$$

\item\label{Rgrad_it} ({\it Grading II}): $\cI_{g,\ell,B}^{\om,\phi}$ is homogeneous of degree
$(g\!-\!1)n\!-\!\lr{c_1(X,\om),B}$, i.e.
$$\big|\cI_{g,\ell,B}^{\om,\phi}(\mu)\big|=|\mu|+(g\!-\!1)n\!-\!\blr{c_1(X,\om),B}
\quad\forall~\mu\!\in\!H^*(X;\Q)^{\ell};$$

\item\label{RfcI_it} ({\it Fundamental Class I}): for all $\mu\!\in\!H^*(X;\Q)$,
$$\cI_{0,2,B}^{\om,\phi}(\mu,1)
=\begin{cases} 0, &\hbox{if}~B\!\neq\!0;\\ 
-\lr{\mu,[X^{\phi}]}, &\hbox{if}~B\!=\!0;\end{cases}$$

\item\label{RfcII_it} ({\it Fundamental Class II}): if $g\!+\!\ell\!\ge\!2$ and 
$\mu\!\in\!H^*(X;\Q)^{\ell}$,  
$${}\hspace{-.5in}
\cI_{g,\ell+1,B}^{\om,\phi}(\mu,1)
=\big(\ff_{\ell+1}^{\R}\big)^{\!*}\!\big(\cI_{g,\ell,B}^{\om,\phi}(\mu)\!\big);$$

\item\label{RdivI_it}  ({\it Divisor II}):  
if $g\!+\!\ell\!\ge\!2$, $\mu\!\in\!H^*(X;\Q)^{\ell}$, $\mu_{\ell+1}\!\in\!H^2(X;\Q)$, and
$\ga\!\in\!H^*(\R\ov\cM_{g,\ell};\Q)$, 
$$\hspace{-.5in}
\int_{\R\ov\cM_{g,\ell+1}}\!\!(\ff_{\ell+1}^{\R})^{\!*}\!\ga\,
\cI_{\!g,\ell+1,B}^{\om,\phi}
\big(\mu,\mu_{\ell+1}\big)=
\lr{\mu_{\ell+1},B}\!\int_{\R\ov\cM_{g,\ell}}\!\!\ga
\cI_{g,\ell,B}^{\om,\phi}(\mu);$$

\item\label{RMapPt_it} ({\it Mapping to Point}):
for all $\mu_1,\ldots,\mu_{\ell}\!\in\!H^*(X;\Q)$,
$${}\hspace{-.5in}
\cI_{g,{\ell},0}^{\om,\phi}\big(\mu_1,\ldots,\mu_{\ell}\big)
=\begin{cases} 
-\lr{\mu_1\!\ldots\!\mu_{\ell},[X^{\phi}]}, &\hbox{if}~g\!=\!0,\,\ell\!\ge\!2;\\
\pm\lr{\mu_1\!\ldots\!\mu_{\ell},[X^{\phi}]}e(\bE^{\R}), 
&\hbox{if}~n\!=\!1,\,g\!\in\!2\Z^+;\\
0,&\hbox{otherwise};
\end{cases}$$

\item\label{RGenusRed_it} ({\it Genus Reduction}): for all $\mu\!\in\!H^*(X;\Q)^{\ell}$,
\begin{equation*}\begin{split}
\hspace{-.2in}
\big(\io_{g,\ell}^{\C}\big)^{\!*}
\big(\cI_{g,\ell,B}^{\om,\phi}(\mu)\!\big)
&=\sum_{i,j=1}^N\!g^{ij}\cI_{g-2,\ell+2,B}^{\om,\phi}\big(\mu,e_i,e_j\big),\\
\big(\io_{g,\ell}^E\big)^{\!*}
\big(\cI_{g,\ell,B}^{\om,\phi}(\mu)\!\big)
&=(-1)^{g+|\mu|}\cI_{g-1,\ell+1,B}^{\om,\phi}\big(\mu,\PD_X^{-1}\big([X^{\phi}]_X\big)\!\big);
\end{split}\end{equation*}

\item\label{RSplit_it} ({\it Splitting}): for all $\mu\!\in\!H^*(X;\Q)^{\ell}$,
$P\!\in\!\cP(g\!-\!1,\ell)$ as in~\eref{PwtPdfn_e},
$\wt{P}\!\in\!\wt\cP(g,\ell)$ as in~\eref{PwtPdfn_e}, and
partitions \hbox{$[\ell]\!=\!I\!\sqcup\!J$},
\begin{equation*}\begin{split}
{}\hspace{-.3in}
\big(\io_P^{\C}\big)^{\!*}\big(\cI_{g,\ell,B}^{\om,\phi}(\mu)\!\big)&
=(-1)^{\ve_n(P,\mu)}\!\!\!\!\!\!\!\!\!\!\!\!\sum_{\begin{subarray}{c}B_1,B_2\in H_2(X;\Z)\\
B_1+B_2=B \end{subarray}}\!\sum_{i,j=1}^N\!g^{ij}
\cI_{g_1,|I|+1,B_1}^{\om,\phi}\!\big(\mu_I,e_i\big)\!\times\!
\cI_{g_2,|J|+1,B_2}^{\om,\phi}\!\big(e_j,\mu_J\big),\\
{}\hspace{-.3in}
\big(\io_{\wt{P}}^{\C}\big)^{\!*}\big(\cI_{g,\ell,B}^{\om,\phi}(\mu)\!\big)
&=(-1)^{\ve(\wt{P},\mu)}\!\!\!\!\!\!\!\!\!\!\!\!\sum_{(B',B_0)\in\cP_{\wt{P}}^{\phi}(B)}\!
\sum_{i,k=1}^N\!g^{ik}
\cI_{g',(I\sqcup\{\ell+1\},J),B'}^{\om,\phi,L}\!\big(\mu_{I\sqcup J},e_i\big)\!\times\!
\cI_{g_0,|K|+1,B_0}^{\om,\phi}\!\big(e_k,\mu_K\big),\\
{}\hspace{-.3in}
\big(\io_{g,(I,J)}^{\C}\big)^{\!*}\big(\cI_{g,\ell,B}^{\om,\phi}(\mu)\!\big)
&=\!\!\!
\sum_{\begin{subarray}{c}B'\in H_2(X;\Z)\\ B'-\phi_*(B')=B \end{subarray}}\!
\sum_{i,j=1}^N\!g^{ij}
\cI_{(g-1)/2,(I\sqcup\{\ell+1\},J\sqcup\{\ell+2\}),B'}^{\om,\phi,L}
\big(\mu,e_i,e_j\big),\\
{}\hspace{-.3in}
\big(\io_{g,(I,J)}^E\big)^{\!*}\big(\cI_{g,\ell,B}^{\om,\phi}(\mu)\!\big)
&=(-1)^{|\mu|}\!\!\!\!\!\!\!\!\!\!
\sum_{\begin{subarray}{c}B'\in H_2(X;\Z)\\ B'-\phi_*(B')=B \end{subarray}}
\!\!\!\!\!\!\!\!\!
\cI_{g/2,(I\sqcup\{\ell+1\},J),B'}^{\om,\phi,L}\big(\mu,\PD_X^{-1}\big([X^{\phi}]_X\big)\!\big);
\end{split}\end{equation*}

\item\label{RMotivic_it} ({\it Motivic Axiom}):
$\cI_{g,\ell,B}^{\om,\phi}$ is induced by the correspondence~$C_{g,\ell,B}^{\om,\phi}$.

\end{enumerate}

\vspace{.15in}

\noindent
The first property in~\ref{Rcov_it} follows from the $\bS_{\ell}$-invariance
of the twisted fundamental class of~$\R\ov\cM_{g,\ell}$
and of the virtual fundamental class of~$\ov\fM_{g,\ell}^{\phi}(B;J)$,
the first identity in~\eref{bSellact_e2b}, and the first identity in~\eref{Rffvp_e}.
The second property in~\ref{Rcov_it} follows~from
\BE{thiFCprp_e}\th_{i*}\big([\R\ov\cM_{g,\ell}]\big)=-[\R\ov\cM_{g,\ell}], \qquad
\wt\th_{i*}\big(\big[\ov\fM_{g,\ell}^{\phi}(B;J)\big]^{\vir}\big)
=-\big[\ov\fM_{g,\ell}^{\phi}(B;J)\big]^{\vir},\EE
the first identity in~\eref{wtthiprp_e}, and the second identity in~\eref{Rffvp_e}.
The property~\ref{RfcII_it} is a consequence of the compatibility, as in~\eref{fiberint_e},
of the virtual fundamental classes for~$\ov\fM_{g,\ell+1}^{\phi}(B;J)$
and~$\ov\fM_{g,\ell}^{\phi}(B;J)$ 
with the orientation~of a generic fiber of the forgetful morphism~$\wt\ff_{\ell+1}^{\R}$
in~\eref{Rwtffell_e} by the position of the marked point~$z_{\ell+1}^+$.
Since the intersection of any degree~$B$ curve in~$X$
with a generic representative for the Poincare dual of $\mu\!\in\!H^2(X;\Q)$ is $\lr{\mu,B}$,
this compatibility also implies~\ref{RdivI_it}.
Similarly to the complex case, \ref{RMotivic_it} is immediate 
from~\eref{cupcap_e}, \eref{PDdfn_e},  and
the definitions of~$\cI_{g,\ell,B}^{\om,\phi}$ 
and~$C_{g,\ell,B}^{\om,\phi}$ after~\eref{RIdfn_e}.
We establish~\ref{RGenusRed_it} and~\ref{RSplit_it} in Section~\ref{RSplitPf_subs}.

\begin{proof}[{\bf{\emph{Proof of~\ref{RfcI_it} and~\ref{RMapPt_it},\,$\mathbf{g\!=\!0}$}}}]
By definition, 
\BE{RfcI_e3}\int_{\R\ov\cM_{0,2}}\!\!\ga
\cI_{0,2,B}^{\om,\phi}\big(\mu,1\big)
= \int_{[\ov\fM_{0,2}^{\phi}(B;J)]^{\vir}}\!\! (\ff^*\ga\big)
\big(\ev_1^*\mu\big)\,.\EE
If $B\!\neq\!0$, $\ev_1^*\mu\!=\!\wt\ff_2^{\R\,*}\ev_1^*\mu$ and a generic fiber of 
$$\wt\ff_2^{\R}\!:\ov\fM_{0,2}^{\phi}(B;J) \lra\ov\fM_{0,1}^{\phi}(B;J)$$
is~$\P^1$.
Since $\R\ov\cM_{0,2}\!\approx\!\R\P^1$, the integral on the right-hand side
of~\eref{RfcI_e3} thus vanishes for dimensional reasons.
This establishes the $B\!\neq\!0$ case of~\eref{RfcI_it}.\\

\noindent
Let $\ell\!\ge\!2$.
By \cite[Corollary~3.19]{RealGWsGeom}, the natural isomorphism
$$\ov\fM_{0,\ell}^{\phi}(0;J)\approx \R\ov\cM_{0,\ell}\!\times\!X^{\phi}$$
is orientation-reversing with respect to the orientations on~$\R\ov\cM_{0,\ell}$
and $\ov\fM_{0,\ell}^{\phi}(0;J)$ specified in \cite[Section~3.5]{RealGWsGeom}.
Along with~\eref{FCprod_e}, this implies the $B\!=\!0$ case of~\ref{RfcI_it} and 
the $g\!=\!0$ case of~\ref{RMapPt_it}.
\end{proof}

\begin{proof}[{\bf{\emph{Proof of~\ref{RMapPt_it},\,$\mathbf{g\!\ge\!1}$}}}]
There is a natural identification
\BE{RMB0split_e} \ov\fM_{g,\ell}^{\phi}(0;J)\approx X^{\phi}\!\times\!\R\ov\cM_{g,\ell}\,.\EE
We denote by 
$\pi_1,\pi_2\!:\ov\fM_{g,\ell}^{\phi}(0;J)\!\lra\!X^{\phi},\R\ov\cM_{g,\ell}$ 
the two projections.
The kernels of the linearizations~$D_{(TX,\nd\phi)}$ of the $\dbar_J$-operators on 
the space of real maps from symmetric surfaces to~$(X,\phi)$ form 
the vector bundle $\pi_1^*TX^{\phi}$ over the moduli space 
on the left-hand side of~\eref{RMB0split_e}.
The cokernels of these linearizations form 
a vector orbi-bundle isomorphic to the real part
\BE{RB0Obs_e}\big(\pi_1^*TX\!\otimes\!\pi_2^*\bE^*\big)^{\!\R}\approx 
\pi_1^*TX^{\phi}\!\otimes\!\pi_2^*(\bE^{\R})^*\
\lra X^{\phi}\!\times\!\R\ov\cM_{g,\ell}\EE
of the complex vector bundle $\pi_1^*TX\!\otimes\!\pi_2^*\bE^*$ 
with the conjugation induced by the involutions on~$X$ and~$\bE$.\\

\noindent
For a symmetric, possibly nodal, Riemann surface $(\Si,\si)$,
we denote by $\dbar_{(\Si,\si)}$ the real Cauchy-Riemann operator on
the real line pair $(\Si\!\times\!\C,\si\!\times\!\fc)$ over~$(\Si,\si)$
induced by the standard $\dbar$-operator on the smooth functions on~$\Si$;
see \cite[Section~2]{RealGWsGeom}.
The determinants of the operators~$\dbar_{(\Si,\si)}$ form a real line orbi-bundle~$\det\dbar_{\C}$
over~$\R\ov\cM_{g,\ell}$; see~\cite{detLB}.
Since the kernel of each operator~$\dbar_{(\Si,\si)}$ consists of constant $\R$-valued functions,
there is a canonical homotopy class of isomorphisms
\BE{RcMisom_e0}   \det\dbar_{\C}\approx \La_{\R}^{\top}(\bE^{\R})\EE
of real line orbi-bundles over~$\R\ov\cM_{g,\ell}$. 
By \cite[Propositions~5.9/6.1]{RealGWsI}, there is also a canonical homotopy class of isomorphisms
\BE{RcMisom_e} \La^{\top}_{\R}\big(T(\R\ov\cM_{g,\ell})\!\big)\approx\det\dbar_{\C}\EE
of real line orbi-bundles over~$\R\ov\cM_{g,\ell}$.\\

\noindent
The determinants of the linearizations of the $\dbar_J$-operators on 
the space of real maps from symmetric surfaces to~$(X,\phi)$   
form a real line orbi-bundle $\det D_{(TX,\nd\phi)}$ 
over $\ov\fM_{g,\ell}^{\phi}(0;J)$; see \cite[Section~4.3]{RealGWsI}.
The forgetful morphism~\eref{Rffdfn_e} induces an isomorphism
\BE{RfMisom_e}\La_{\R}^{\top}\big(T\ov\fM_{g,\ell}^{\phi}(0;J)\!\big)
\approx \big(\!\det D_{(TX,\nd\phi)}\big)\!\otimes\!
\ff^*\!\big(\La^{\top}_{\R}(T(\R\ov\cM_{g,\ell})\!\big)\!\big)\EE
of real line orbi-bundles over $\ov\fM_{g,\ell}^{\phi}(0;J)$;
the line orbi-bundle on the left-hand side is the top exterior power of 
the virtual tangent bundle for the moduli space $\ov\fM_{g,\ell}^{\phi}(0;J)$.
A real orientation~$(L,[\psi],\fs)$ on~$(X,\om,\phi)$ determines a homotopy class of isomorphisms
\BE{ROisom_e}  \det D_{(TX,\nd\phi)}\approx  
\ff^*\!\big(\!(\det\dbar_{\C})^{\otimes n}\big)\EE
of real line orbi-bundles over $\ov\fM_{g,\ell}^{\phi}(0;J)$ via~\eref{RBPisom_e}
for each stable map~$u$ representing an element of $\ov\fM_{g,\ell}^{\phi}(0;J)$;
see \cite[Section~3.2]{RealGWsGeom}.
By \cite[Lemma~3.1]{RealGWsGeom} with \hbox{$\rk_{\C}L\!=\!1$} and \hbox{$\deg L\!=\!0$}, 
the homotopy class of isomorphisms in~\eref{ROisom_e} is determined by the orientation of~$X^{\phi}$
induced by~$(L,[\psi],\fs)$ if and only if $g(g\!-\!1)/2$ is even.\\

\noindent
Combining \eref{RcMisom_e}-\eref{ROisom_e} with the identification of 
the cokernels of the operators~$D_{(TX,\nd\phi)}$ in~\eref{RB0Obs_e}, 
we obtain a homotopy class of isomorphisms
\BE{RMB0vfc_e0}\begin{split} 
&\La_{\R}^{\top}\!\big(T(X^{\phi}\!\times\!\R\ov\cM_{g,\ell})\!\big)
\!\otimes\!
\big(\La_{\R}^{\top}\!\big(\pi_1^*TX^{\phi}\!\otimes\!\pi_2^*(\bE^{\R})^*\big)\!\big)^*
\equiv 
\La_{\R}^{\top}\big(T\ov\fM_{g,\ell}^{\phi}(0;J)\!\big)\\
&\hspace{1in}
\approx \big(\!\det D_{(TX,\nd\phi)}\big)\!\otimes\!
\ff^*\!\big(\La^{\top}_{\R}(T(\R\ov\cM_{g,\ell})\!\big)\!\big)
\approx \ff^*\!\big((\det\dbar_{\C})^{\otimes(n+1)}\big)
\end{split}\EE
of real line orbi-bundles over $\ov\fM_{g,\ell}^{\phi}(0;J)$.
Since $n\!+\!1\!\in\!2\Z$, the last line bundle in~\eref{RMB0vfc_e0} is canonically oriented.
Thus, 
\BE{RMB0vfc_e}
\big[\ov\fM_{g,\ell}^{\phi}(0;J)\big]^{\vir}=
\pm e\big(\pi_1^*TX^{\phi}\!\otimes\!\pi_2^*(\bE^{\R})^*\big)\!\cap\!
\big[X^{\phi}\!\times\!\R\ov\cM_{g,\ell}\big].\EE
The cap product above is taken with respect to the relative orientation of 
the vector orbi-bundle~\eref{RB0Obs_e} determined via~\eref{RMB0vfc_e0}.
This relative orientation depends on
the choice of a coherent system of determinant line bundles as in~\cite{detLB},
and so does the sign in~\eref{RMB0vfc_e}.\\

\noindent
If $g\!\not\in\!2\Z$, the rank of $\pi_1^*TX^{\phi}\!\otimes\!\pi_2^*(\bE^{\R})^*$
is odd.
The Euler class in~\eref{RMB0vfc_e} thus vanishes in this case.
If $g\!\ge\!2$, $\pi_1^*TX^{\phi}\!\otimes\!\pi_2^*(\bE^{\R})^*$ is pulled back from
$X^{\phi}\!\times\!\R\ov\cM_{g,0}$.
Since
$$\rk\big(\pi_1^*TX^{\phi}\!\otimes\!\pi_2^*(\bE^{\R})^*\!\big)>
\dim\!\big(X^{\phi}\!\times\!\R\ov\cM_{g,0}\big)
\qquad\hbox{if}~n\!>\!3,\,g\!\ge\!2,$$
the Euler class in~\eref{RMB0vfc_e} also vanishes if $n\!>\!3$ and $g\!\ge\!2$
(just as happens in the complex case).
If $n\!=\!1,3$, the vector bundle $TX^{\phi}$ over~$X^{\phi}$ is trivial.
Since 
$$3\,\rk(\bE^{\R})>\dim\R\ov\cM_{g,0} ~~\hbox{if}~g\!\ge\!2,$$
the Euler class in~\eref{RMB0vfc_e} also vanishes if $n\!=\!3$ and $g\!\ge\!2$.\\

\noindent
In the remaining $n\!=\!1$, $g\!\in\!2\Z^+$ case of~\ref{RMapPt_it}, 
\eref{RMB0vfc_e0} and~\eref{RMB0vfc_e} reduce~to
\begin{gather}\label{Rdeg0isom_e}
\La_{\R}^{\top}\!\big(T(X^{\phi}\!\times\!\R\ov\cM_{g,\ell})\!\big)
\!\otimes\!\La_{\R}^{\top}\!\big(\pi_2^*(\bE^{\R})^*\big)
\approx \La_{\R}^{\top}\big(T\ov\fM_{g,\ell}^{\phi}(0;J)\!\big)
\approx \ff^*\!\big(\!(\det\dbar_{\C})^{\otimes2}\big),\\
\label{RMB0vfc_e2}
\big[\ov\fM_{g,\ell}^{\phi}(0;J)\big]^{\vir}=
\pm (-1)^{g/2}
\Big([X^{\phi}]\!\times\!\big(e((\bE^{\R})^*)\!\cap\!\big[\R\ov\cM_{g,\ell}\big]\big)\!\!\Big).
\end{gather} 
The cap product above is taken with respect to the relative orientation of 
the vector orbi-bundle~\eref{bERdfn_e} determined by the homotopy class of 
isomorphisms
$$\La_{\R}^{\top}(\bE^{\R})\approx \det\dbar_{\C}\approx
 \La^{\top}_{\R}\big(T(\R\ov\cM_{g,\ell})\!\big)$$
induced by~\eref{RcMisom_e0} and~\eref{RcMisom_e}.
The sign in~\eref{RMB0vfc_e2} depends on 
the choice of a coherent system of determinant line bundles as in~\cite{detLB}.
By~\eref{FCprod_e} and~\eref{cupcap_e}, 
the sign in the middle case of~\ref{RMapPt_it} is opposite to 
the overall sign in~\eref{RMB0vfc_e2}.
\end{proof}

\begin{rmk}\label{RMapPt_rmk}
By the second part of Section~5.3 in the 5th arXiv version of~\cite{detLB},
the choice of a coherent system of determinant line bundles
determines the sign between the virtual fundamental class 
of a smooth moduli space with an obstruction bundle and the cap product of 
the Euler class of the obstruction bundle with the fundamental class of the moduli space
with respect to the relative orientation induced by the orientation of the moduli space
as in~\eref{RMB0vfc_e}.
A coherent system of determinant line bundles also determines 
the homotopy class of the isomorphisms~\eref{RcMisom_e} and
the homotopy class of the isomorphisms 
$$\La_{\R}^{\top}\!\big(T(X^{\phi}\!\times\!\R\ov\cM_{g,\ell})\!\big)
\!\otimes\!
\La_{\R}^{\top}\!\big(\pi_1^*TX^{\phi}\!\otimes\!\pi_2^*(\!(\bE^{\R})^*)\!\big)
\approx \big(\!\det D_{(TX,\nd\phi)}\big)\!\otimes\!
\ff^*\!\big(\La^{\top}_{\R}(T\R\ov\cM_{g,\ell})\!\big)$$
induced by the identification in~\eref{RMB0vfc_e0} and the isomorphism in~\eref{RfMisom_e}.
By Proposition~5.10 in the 5th arXiv version of~\cite{detLB},
the sign in~\eref{RMB0vfc_e}
for the ``base" coherent system of determinant line bundles given by \cite[(4.10)]{detLB}
is~plus.
The sign~$\pm$ in~\eref{RMB0vfc_e2} is plus as well for this system.
In general, the sign~$\pm$ in~\eref{RMB0vfc_e2} is the same as the sign of
the component \hbox{$A_{1-g,g}\!\in\!\R^*$} corresponding, as in Theorem~2 in Section~3.4
of~\cite{detLB}, to the coherent system of determinant line bundles used.
The signs of~$A_{1-g,g}$ also correspond to the two relative orientations of 
the vector orbi-bundle~\eref{bERdfn_e} determined by the homotopy class 
of isomorphisms~\eref{RcMisom_e}.
As explained in \cite[Section~3.5]{RealGWsGeom}, 
the construction of this homotopy class in~\cite{RealGWsI} 
involves a somewhat arbitrary sign choice which can be fixed
systematically from an orientation of~$\R\ov\cM_{0,2}$.
\end{rmk}

\begin{rmk}\label{CanonOrient_rmk}
The properties \ref{Reff1_it}-\ref{RMotivic_it} are stated for 
the orientations on the moduli spaces~$\ov\fM_{g,\ell}^{\phi}(B;J)$
induced by a real orientation~$(L,[\psi],\fs)$ on~$(X,\om,\phi)$ via~\eref{RBPisom_e}
as in \cite[Section~3.3]{RealGWsGeom}.
These properties remain valid  with the modifications described below
if $\wt\phi$ is a conjugation on~$L$ lifting~$\phi$ and
the orientations on the moduli spaces are instead
induced via~\eref{RBPisom_e3} as in~\cite{RealGWsI}.
The homotopy class of isomorphisms in~\eref{ROisom_e} is then always determined by 
the orientation of~$X^{\phi}$ induced by~$(L,[\psi],\fs)$, and thus $(-1)^{g/2}$
should be omitted from~\eref{RMB0vfc_e2}.
By Remark~\ref{CanonOrient_rmk2},
the right-hand side in the first equation in~\ref{RGenusRed_it} should be negated,
while $g$ in the sign exponent in the second equation in~\ref{RGenusRed_it}
should be replaced with \hbox{$\lr{c_1(X,\om),B}/2\!-\!1$}.
Each summand on the right-hand side of the first equation in~\ref{RSplit_it} should be
multiplied by $(-1)^{\ve_{g_1,g_2}(B_1,B_2)}$ with 
$$\ve_{g_1,g_2}(B_1,B_2)=\big(g_1\!-\!1\!+\!\lr{c_1(X,\om),B_1}/2\big)
\big(g_2\!-\!1\!+\!\lr{c_1(X,\om),B_2}/2\big),$$
and the entire right-side should be negated.
For the purposes of the last three equations in~\ref{RSplit_it},
the leading sign should be dropped from the definition of~$\cI_{g,(I,J),B}^{\om,\phi,L}$
in~\eref{IomphiLdfn_e};
this makes~$\cI_{g,(I,J),B}^{\om,\phi,L}$ independent of the complex line bundle~$L$,
which can thus be omitted from the superscript. 
The leading sign exponents, in front of the sums,
in these three equations should changed to $(-1)^{\ve(\wt{P},\mu)+g'}$,
$(-1)^{(g+1)/2}$, and~$(-1)^{|\mu|+g/2}$, respectively.
The sum of~$\ve_n(P)$ and~$\ve_{g_1,g_2}(B_1,B_2)$ is even, 
i.e.~it does not contribute to the overall sign in the first equation in~\ref{RSplit_it}, 
if $n\!\cong\!3$ mod~4 and $c_1(X,\om)$ is divisible by~4 on the classes in~$H_2(X;\Z)$
representable by real maps.
This is in particular the case if $X$ is a Calabi-Yau threefold 
or the complex projective space~$\P^3$.
\end{rmk}

\subsection{Reconstruction of invariants}
\label{RRecon_subs}

\noindent
We call a collection of homomorphisms~$\cI_{0,\ell,B}^{\om,\phi}$
as in~\eref{RIdfn_e} with $\ell\!\ge\!2$ and $B\!\in\!H_2(X;\Z)$ which satisfies
the properties \ref{Reff2_it}-\ref{RfcII_it}, \ref{RdivI_it} with $\ga\!=\!1$, 
and \ref{RMapPt_it}-\ref{RMotivic_it} 
an \sf{extension} of the tree-level system $\{I_{0,\ell,B}^{\om}\}$ 
of GW-classes for~$(X,\om)$.
Theorem~\ref{Rrecont_thm1} below is the real analogue of Proposition~\ref{Crecont_prp1}.
According to~it, an extension of a tree-level system of GW-classes for~$(X,\om)$
can be reconstructed from a small subcollection of the full collection.
Similarly to Proposition~\ref{Crecont_prp1}, 
Theorem~\ref{Rrecont_thm1} follows readily from the fact that 
$H^*(\R\ov\cM_{0,\ell};\Q)$ is generated as a ring by certain ``boundary" classes.

\begin{thm}\label{Rrecont_thm1}
Let $(X,\om,\phi)$ be a compact real symplectic manifold of dimension~$2n$
and $\{\cI_{0,\ell,B}^{\om}\}$ be a tree-level system of GW-classes for~$(X,\om)$.
A $\phi$-extension $\{\cI_{0,\ell,B}^{\om,\phi}\}$ of $\{\cI_{0,\ell,B}^{\om}\}$
is determined by the~numbers
$$\int_{\R\ov\cM_{0,{\ell}}}\!\!\!\cI_{0,\ell,B}^{\om,\phi}(\mu)\in\Q
\qquad\hbox{with}\quad
\ell\in\Z^{\ge0},\,B\!\in\!H_2(X;\Z)_{\om}^{\phi},\,\mu\!\in\!H_-^*(X;\Q)^{\ell},$$
i.e.~by the codimension~0 classes $I_{0,\ell,B}^{\om,\phi}(\mu)$ 
with $\mu\!\in\!H_-^*(X;\Q)^{\ell}$.
If in addition $H_-^*(X;\Q)$ is generated as a ring by~$H_-^2(X;\Q)$ and $H^*_+(X;\Q)$, 
then this extension is determined by the codimension~0 classes 
$\cI_{0,2,B}^{\om,\phi}(\mu_1,\mu_2)$ with 
\BE{Rrecont_e2}\begin{split}
&\hspace{.8in} B\in H_2(X;\Z)_{\om}^{\phi},\quad \lr{c_1(X,\om),B}\le n\!+\!1,\\ 
&\mu_1\in H_-^*(X;\Q), \quad
|\mu_1|=n\!+\!\blr{c_1(X,\om),B}\!-\!1, \quad \mu_2\in H_-^2(X;\Q).
\end{split}\EE
\end{thm}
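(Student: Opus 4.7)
The plan is to parallel the proof of Proposition~\ref{Crecont_prp1} in the complex case, with the Deligne--Mumford space $\ov\cM_{0,\ell}$ replaced by $\R\ov\cM_{0,\ell}$ and with Keel's generation result for $H^*(\ov\cM_{0,\ell};\Q)$ replaced by the analogous statement for $H^*(\R\ov\cM_{0,\ell};\Q)$ established in~\cite{RDMhomol,RDMbl}: this ring is generated by the Poincar\'e duals $u_{\io}$ of the images of the boundary immersions $\io_{\wt{P}}^{\C}$ and $\io_{0,(I,J)}^E$ of~\eref{RiowtPdfn_e}--\eref{RioIJEdfn_e}. By the Motivic Axiom~\ref{RMotivic_it} together with Poincar\'e duality~\eref{PDdfn_e}, determining $\cI_{0,\ell,B}^{\om,\phi}(\mu)\in\wt{H}^*(\R\ov\cM_{0,\ell};\Q)$ amounts to determining all pairings $\int_{\R\ov\cM_{0,\ell}}\ga\cup\cI_{0,\ell,B}^{\om,\phi}(\mu)$ with $\ga\in H^*(\R\ov\cM_{0,\ell};\Q)$, so expressing each such $\ga$ as a polynomial in the boundary classes~$u_{\io}$ and applying the push-pull formula~\eref{pushintprp_e} to each monomial reduces these pairings to integrals over sources of boundary immersions.

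The Splitting axiom~\ref{RSplit_it} then expresses each $\io^*\cI_{0,\ell,B}^{\om,\phi}(\mu)$ as a sum of products of complex GW-classes $\cI^{\om,\phi,L}$, which by~\eref{IomphiLdfn_e} are determined by the given tree-level system $\{\cI_{0,\ell,B}^{\om}\}$, and real GW-classes $\cI_{0,\ell',B'}^{\om,\phi}$ on $\R\ov\cM_{0,|K|+1}$ with $|K|+1<\ell$ (or with no real factor at all in the $\io_{0,(I,J)}^E$-case). Combined with the base cases from~\ref{RfcI_it} and the $g=0$ case of~\ref{RMapPt_it}, induction on $\ell$ reduces every pairing to a codimension-$0$ integral $\int_{\R\ov\cM_{0,\ell}}\cI_{0,\ell,B}^{\om,\phi}(\mu)$. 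To restrict further to $\mu\in H^*_-(X;\Q)^{\ell}$, I would invoke the second identity in~\ref{Rcov_it}: if some $\mu_i\in H^*_+(X;\Q)$, then $\th_i^*\cI_{0,\ell,B}^{\om,\phi}(\mu)=\cI_{0,\ell,B}^{\om,\phi}(\mu)$, and combined with $\th_{i*}[\R\ov\cM_{0,\ell}]=-[\R\ov\cM_{0,\ell}]$ from~\eref{thiFCprp_e}, this forces the codimension-$0$ integral to vanish. Since $H^*(X;\Q)=H^*_+\oplus H^*_-$, multilinearity yields the first conclusion.

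For the second conclusion, assume $H^*_-(X;\Q)$ is generated as a ring by $H^2_-(X;\Q)$ and $H^*_+(X;\Q)$. Grading~\ref{Rgrad_it} at $g=0$, together with~\eref{RcMdim_e}, shows that $\cI_{0,\ell,B}^{\om,\phi}(\mu)$ is of codimension~$0$ precisely when $|\mu|=n+2\ell-3+\lr{c_1(X,\om),B}$; for $\ell=2$ with $\mu_2\in H^2_-(X;\Q)$ this gives $|\mu_1|=n-1+\lr{c_1(X,\om),B}$, and $0\le|\mu_1|\le 2n$ forces $\lr{c_1(X,\om),B}\le n+1$, matching~\eref{Rrecont_e2}. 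I would then use the Divisor axiom~\ref{RdivI_it} with $\ga=1$ iteratively to strip off any insertion $\mu_i\in H^2_-(X;\Q)$, lowering $\ell$ by one per application at the cost of a factor $\lr{\mu_i,B}$.

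The hard part is handling insertions $\mu_i\in H^*_-(X;\Q)$ that, by the generation hypothesis, must be written as cup products $h\!\cdot\!e$ of classes $h\in H^2_-(X;\Q)$ and $e\in H^*_+(X;\Q)$: the axioms do not directly permit separating such a cup-product insertion into two distinct marked points, since inserting $e\in H^*_+(X;\Q)$ at a new marked point annihilates the resulting codimension-$0$ integral by the Covariance argument above. I expect this to be overcome by exploiting additional relations in $H^*(\R\ov\cM_{0,\ell};\Q)$ from~\cite{RDMhomol,RDMbl}: pulling $\cI_{0,\ell,B}^{\om,\phi}(\mu)$ back along distinct boundary immersions whose Poincar\'e duals satisfy a known linear relation yields identities between real GW-classes in which the $h\!\cdot\!e$ insertion can be effectively traded for an $H^2_-$-insertion at a separate marked point combined with lower-$\ell$ invariants, enabling the induction to close.
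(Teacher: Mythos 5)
Your proposal follows the same route as the paper's proof for the first conclusion: use the generation of $H^*(\R\ov\cM_{0,\ell};\Q)$ by boundary classes from~\cite{RDMhomol} to reduce positive-codimension classes $\cI_{0,\ell,B}^{\om,\phi}(\mu)$ to their pullbacks under $\io_{\wt{P}}^{\C}$ and $\io_{0,(I,J)}^E$, apply the second and last identities of~\ref{RSplit_it} to express those pullbacks via the tree-level system and real classes with fewer conjugate pairs of marked points, close the induction at the codimension-$0$ classes, and then kill insertions from $H^*_+(X;\Q)$ using the second property in~\ref{Rcov_it} together with the first identity in~\eref{thiFCprp_e}. This matches the paper's argument in substance; phrasing the reduction as expanding a test class $\ga$ in boundary monomials and pushing-pulling is equivalent to the paper's observation that the joint kernel of the boundary pullback maps is exactly the top-degree cohomology.

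For the second conclusion, you correctly locate the obstruction: if $H^*_-(X;\Q)$ is generated by $H^2_-(X;\Q)$ and $H^*_+(X;\Q)$, an insertion of the form $h\cdot e$ with $h\in H^2_-(X;\Q)$, $e\in H^*_+(X;\Q)$ cannot simply be pulled apart onto two marked points, because a codimension-$0$ class with an $H^*_+$-insertion vanishes. Your expectation that this is overcome by a homology relation among boundary divisors of $\R\ov\cM_{0,\ell}$, which yields an identity in which the $h\cdot e$ insertion is effectively traded for an $H^2_-$-insertion plus lower-$\ell$ invariants, is also the right idea. But your proposal stops at an expectation rather than an argument. The paper closes exactly this gap by appealing to the proof of \cite[Corollary~2.4(2)]{RealEnum}, which rests on such a boundary homology relation together with \ref{CfcI_it}, \ref{RfcI_it}, \ref{RdivI_it}, the second identity of~\ref{RSplit_it}, and the $H^*_+$-vanishing; to make your proof self-contained you would need to cite or reproduce that relation and verify the induction it supports. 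Everything else in your outline is sound, including the grading computation producing the constraints in~\eref{Rrecont_e2}.
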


\begin{proof} By \cite[Theorem~2.2]{RDMhomol}, 
the cohomology ring $H^*(\R\ov\cM_{0,\ell};\Q)$ is generated by 
the Poincar\'e duals of the images of the immersions~$\io_{\wt{P}}^{\C}$
in~\eref{RiowtPdfn_e} with $\wt{P}\!\in\!\wt\cP(0,\ell)$ 
and~$\io_{0,(I,J)}^E$ in~\eref{RioIJEdfn_e} with \hbox{$[\ell]\!=\!I\!\sqcup\!J$}.
Along with Poincar\'e Duality for~$\R\ov\cM_{0,\ell}$, this implies~that
$$\bigcap_{\wt{P}\in\wt\cP(0,\ell)}\hspace{-.2in}\big(\!\ker\io_{\wt{P}}^{\C*}\big)
\cap\bigcap_{[\ell]=I\sqcup J}\!\!\!\!\!\!\big(\!\ker\io_{0,(I,J)}^{\,E*}\big)
=H^{2\ell-3}\big(\R\ov\cM_{0,\ell};\Q\big)\,,$$
with $\io_{\wt{P}}^{\C*}$ and $\io_{0,(I,J)}^{\,E*}$ denoting 
the pullback homomorphisms on the rational cohomology.
Thus, every positive-codimension cohomology class~$\cI_{0,\ell,B}^{\om,\phi}(\mu)$ on~$\R\ov\cM_{0,\ell}$
is determined by its pullbacks
\BE{Rrecont_e3}\begin{aligned}
\io_{\wt{P}}^{\C*}\cI_{0,\ell,B}^{\om,\phi}(\mu)&\in H^*\big(\ov\cM_{\wt{P}}^{\C};\Q\big)
&\quad&\hbox{with}~~\wt{P}\!\in\!\wt\cP(0,\ell)  \qquad\hbox{and}\\
\io_{0,(I,J)}^{E\,*}\cI_{0,\ell,B}^{\om,\phi}(\mu)&\in H^*\big(\ov\cM_{0,\ell+1};\Q\big)
&\quad&\hbox{with}~[\ell]\!=\!I\!\sqcup\!J\,.
\end{aligned}\EE
By the last identity in~\ref{RSplit_it}, each of the latter pullbacks is 
a linear combination of classes $\cI_{0,\ell+1,B'}^{\om}(\mu')$ determined by 
the tree-level system~$\{\cI_{0,\ell,B}^{\om}\}$. 
By the second identity in~\ref{RSplit_it}, 
each of the former pullbacks in~\eref{Rrecont_e3} 
to a nonempty domain is a linear combination of the products 
\hbox{$\cI_{0,\ell',B'}^{\om}(\mu')\!\times\!\cI_{0,\ell_0,B_0}^{\om,\phi}(\mu_0)$}
with $\ell_0\!<\!\ell$.
By induction, this implies that a $\phi$-extension $\{\cI_{0,\ell,B}^{\om,\phi}\}$ 
of $\{\cI_{0,\ell,B}^{\om}\}$ is determined  
by the codimension~0 classes $I_{0,\ell,B}^{\om,\phi}(\mu)$ 
with $\mu\!\in\!H^*(X;\Q)^{\ell}$.
By the second property in~\ref{Rcov_it} and the first identity in~\eref{thiFCprp_e},
the codimension~0 classes $I_{0,\ell,B}^{\om,\phi}(\mu_1,\ldots,\mu_{\ell})$
with $\mu_i\!\in\!H^*_+(X;\Q)$ for some $i\!\in\![\ell]$ vanish.
This establishes the first claim.\\

\noindent
Suppose $H^*_-(X;\Q)$ is generated as a ring by~$H_-^2(X;\Q)$ and $H^*_+(X;\Q)$.
By the proof of \cite[Corollary~2.4(2)]{RealEnum}, 
which depends only on a homology relation between the images of the immersions of~$\io_{\wt{P}}^{\C}$,
the second identity in~\ref{RSplit_it},
\ref{CfcI_it}, \ref{RfcI_it}, \ref{RdivI_it}, and the above vanishing property,
every codimension~0 class $I_{0,\ell,B}^{\om,\phi}(\mu)$ is
then determined by the codimension~0 classes~$I_{0,\ell,B}^{\om,\phi}(\mu_1,\mu_2)$
satisfying~\eref{Rrecont_e2} and the complex codimension~0 classes~$I_{0,\ell,B}^{\om}(\mu')$. 
Combining this with the first claim, we obtain the second claim.
\end{proof}

\noindent
A manifestation of the first statement of Theorem~\ref{Rrecont_thm1} is the WDVV-type equation 
for real GW-invariants established in~\cite{RealEnum}.
Define
$$\fd\!:H_2(X;\Z)\lra H_2(X;\Z), \qquad \fd(B')=B'\!-\!\phi_*(B').$$
For $[\ell]\!=\!I\!\sqcup\!J$ and $\mu\!\in\!H^*(X;\Q)^{\ell}$, let 
$$\cW_{I,J}(\mu)=(-1)^{\ve((I,J),\mu)}2^{|J|}\,.$$
According to \cite[Theorem~2.1]{RealEnum},
\BE{RWDVV_e}\begin{split}
&\sum_{\begin{subarray}{c}[\ell]=I\sqcup J\\ 2\in I,\,1,3\in J\end{subarray}}
\!\!\sum_{\begin{subarray}{c}B_0,B'\in H_2(X;\Z)\\ B_0+\fd(B')=B\end{subarray}}
\sum_{i,j=1}^N\!\cW_{I,J}(\mu)\blr{\mu_I,e_i}_{\!0,B_0}^{\!\om,\phi} 
g^{ij} \blr{e_j,\mu_J}_{\!0,B'}^{\!\om}\\
&\hspace{1.2in}=
\sum_{\begin{subarray}{c}[\ell]=I\sqcup J\\ 3\in I,\,1,2\in J\end{subarray}}
\!\!\sum_{\begin{subarray}{c}B_0,B'\in H_2(X;\Z)\\ B_0+\fd(B')=B\end{subarray}}
\sum_{i,j=1}^N\cW_{I,J}(\mu)\blr{\mu_I,e_i}_{\!0,B_0}^{\!\om,\phi} g^{ij} 
\blr{e_j,\mu_J}_{\!0,B'}^{\!\om} 
\end{split}\EE
for all $\ell\!\in\!\Z^+$, $B\!\in\!H_2(X;\Z)$, and 
$\mu\!\equiv\!(\mu_1,\ldots,\mu_{\ell})$ with 
$\mu_1\!\in\!H_+^*(X;\Q)$, 
and \hbox{$\mu_2,\ldots,\mu_{\ell}\!\in\!H_-^*(X;\Q)$}.
The full collection of relations~\eref{RWDVV_e} is equivalent to 
the compatibility of the homomorphism~$\fR_{\phi}$ on the extended 
quantum cohomology of~$(X,\om)$ defined at the end of \cite[Section~7]{RealEnum}
with the quantum product.
This collection completely determines all real genus~0 GW-invariants
$\lr{\ldots}_{0,B}^{\om,\phi}$ of $(X,\om)\!=\!(\P^n,\om_{\FS})$ with $n\!\not\in\!2\Z$
from the basic input $\lr{\pt}_{0,L}^{\om,\phi}\!=\!\pm1$, 
i.e.~the number of real lines in~$\P^n$ through a conjugate pair of points.\\

\noindent
Theorem~\ref{Rrecont_thm1} is a \sf{reconstruction} result in real GW-theory.
Another result is Theorem~\ref{RTRR_thm} below that reduces 
the $g\!=\!0$ descendant GW-invariants~\eref{RGWdfn_e} to the \sf{primary} ones,
i.e.~those with $a_i\!=\!0$ for all $i\!\in\![\ell]$.
Its proof is similar to that of Proposition~\ref{CTRR_prp}.
We give it under the assumption that
the strata of $\ov\fM_{0,\ell}^{\phi}(B;J)$ are of the expected dimension.
It adapts readily to semi-positive symplectic manifolds via real analogues of
Ruan-Tian's global inhomogeneous perturbations as in~\cite{RealRT}
and with some technical care to arbitrary real symplectic manifolds
with real orientations 
via the virtual fundamental class constructions of~\cite{LT,FO}.
The zero-contribution arguments in the last two paragraphs of the proof of 
Theorem~\ref{RTRR_thm} are similar to the proofs of \cite[Corollary~3.4(2)]{RealEnum2}
and \cite[Lemma~6.2]{RealEnum}, respectively.\\

\noindent
For $\ell\!\in\!\Z^+$, $i\!\in\![\ell]$ distinct, and
$B,B_0,B'\!\in\!H_2(X;\Z)$ with $B_0\!+\!\fd(B')\!=\!B$, we denote~by
\BE{RD2dfn_e} \R D_{B_0,B'i}\subset\ov\fM_{0,\ell}^{\phi}(B;J)\EE
the subspace of real maps~$u$ from domains $\Si^+\!\cup\!\Si_0\!\cup\!\Si^-$ such that 
\begin{enumerate}[label=$\bullet$,leftmargin=*]

\item $\Si^+,\Si_0,\Si^-$ are connected genus~0 curves 
with $\Si_0$ sharing a node with~$\Si^+$ and another node with~$\Si^-$,

\item the involution~$\si$ on the domain preserves~$\Si_0$ and interchanges
$\Si^+$ and~$\Si^-$,

\item the first point~$z_i^+$ in the $i$-th conjugate pair of marked points
lies on~$\Si^+$, and

\item the restrictions of~$u$ to~$\Si_0$ and~$\Si^+$ are of degrees~$B_0$ and~$B'$,
respectively.

\end{enumerate}
Under ideal circumstances, $\R D_{B_0,B'i}$ is a union 
of smooth codimension~2 submanifolds in~$\ov\fM_{0,\ell}^{\phi}(B;J)$
intersecting transversely.
The (virtual) normal bundle of each of these submanifolds is naturally isomorphic 
to the complex line bundle of the smoothings of the node 
shared by~$\Si_0$ and~$\Si^+$, as in \cite[Lemma~5.2]{RealEnum}.
These submanifolds thus inherit orientations from that of~$\ov\fM_{0,\ell}^{\phi}(B;J)$. 
By~\cite{LT,FO}, $\R D_{B_0,B'i}$ carries a natural virtual fundamental class.
The meaning of the equation~\eref{RTRR_e} below is that the integral
of the product of the left-hand side with a descendant cohomology class~$\eta$,  
as in the integrand in~\eref{RGWdfn_e}, against $[\ov\fM_{0,\ell}^{\phi}(B;J)]^{\vir}$
equals the integral
of the product of the first term on the right-hand side with~$\eta$ 
against $[\ov\fM_{0,\ell}^{\phi}(B;J)]^{\vir}$ plus
the integral of~$\eta$ against the weighted sum of~$[\R D_{B_0,B'i}]^{\vir}$.

\begin{thm}\label{RTRR_thm}
Let $(X,\om,\phi)$ be a compact real symplectic manifold of dimension $2n$ with $n\!\not\in\!2\Z$
and $(L,[\psi],\fs)$ be a real orientation on~$(X,\om,\phi)$.
For $\ell\!\in\!\Z^+$, $B\!\in\!H_2(X;\Z)$, $i\!\in\![\ell]$,
and $\mu\!\in\!H^2(X;\Q)$,
\BE{RTRR_e}\lr{\mu,B}\psi_i=-2\,\ev_i^*\mu
+\!\sum_{\begin{subarray}{c}B_0,B'\in H_2(X;\Z)\\ B_0+\fd(B')=B\end{subarray}}
\hspace{-.3in}\lr{\mu,B_0}\!\big[\R D_{B_0,B'i}\big]^{\vir}
\in H^2\big(\ov\fM_{0,\ell}^{\phi}(B;J);\Q\big)\,.\EE
\end{thm}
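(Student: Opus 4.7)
Following the strategy of Proposition~\ref{CTRR_prp}, we construct a meromorphic section $s$ of $\cL_i^{\otimes d}$ on $\ov\fM_{0,\ell}^{\phi}(B;J)$ with $d=\lr{\mu,B}$, and read off~\eref{RTRR_e} from the divisor identity $\lr{\mu,B}\psi_i=-c_1(\cL_i^{\otimes d})=-(s)$. As in the complex case, we may assume $\om(B)>0$ and, by linearity of both sides of~\eref{RTRR_e} in $\mu$, represent $\mu$ by a generic pseudocycle $M\subset X$. Let $\wt\fM$ consist of representatives $u\colon(\Sigma,\sigma)\lra(X,\phi)$ of elements of $\ov\fM_{0,\ell}^{\phi}(B;J)$ normalized so that $z_i^+=0$ lies on the irreducible component $\P^1_i\subset\Sigma$ and $\i\in\P^1_i$ equals either $z_i^-$ (when $z_i^-$ lies on $\P^1_i$) or else the unique node separating $z_i^+$ from $z_i^-$, together with the usual energy-threshold conditions on the closed unit disk $\D\subset\P^1_i$. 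Both model anti-holomorphic involutions $\tau,\eta$ on $\P^1_i$ from~\eref{tauetadfn_e} send $0$ to $\i$ and commute with the $S^1$-action of rotations about $0$; this action descends to $\wh\fM=\wt\fM/\!\sim$ to yield $\ov\fM_{0,\ell}^{\phi}(B;J)=\wh\fM/S^1$ and $\cL_i=\wh\fM\times_{S^1}\C$ exactly as in the complex proof.

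We then set $s([u])=[u,\,y_1^+(u)\cdots y_{d_+}^+(u)/y_1^-(u)\cdots y_{d_-}^-(u)]$ for generic $[u]$, where the $y_k^\pm(u)\in\C^*$ are the positive and negative transverse intersections of $u|_{\P^1_i}$ with $M$. The divisor $(s)$ receives three kinds of contributions: (i) the zero $\ev_i^*\mu$ coming from $u(z_i^+)\in M$ at $0$; (ii) the pole $-\ev_i^*\phi^*\mu$ coming from $u(z_i^-)\in M$ at $\i$, via the reality constraint $u(z_i^-)=\phi(u(z_i^+))$; and (iii) pole/zero contributions from the boundary strata that separate $z_i^+$ from $z_i^-$. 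In the real setting these strata come in two flavors: the three-component strata $\R D_{B_0,B'i}$ with $B=B_0+\fd(B')$, in which intersections on the $\sigma$-fixed middle component $\Sigma_0$ migrate to $\i\in\P^1_i=\Sigma^+$ as the node $\Sigma^+\cap\Sigma_0$ collapses and contribute poles of order $\lr{\mu,B_0}$; and the two-component $E$-node strata with $B=\fd(B')$, in which conjugate components $\Sigma^+$ and $\Sigma^-$ meet at a single real node. Both flavors also receive naive contributions from intersections on the conjugate component~$\Sigma^-$.

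The main obstacle, handled in the last two paragraphs of the proof, is to reduce the resulting preliminary divisor formula to the clean identity~\eref{RTRR_e}, i.e.~to eliminate the extraneous $E$-node and $\Sigma^-$ contributions and to convert $-\ev_i^*\mu+\ev_i^*\phi^*\mu$ into $-2\,\ev_i^*\mu$. This is accomplished by two zero-contribution arguments. The first, patterned on~\cite[Corollary~3.4(2)]{RealEnum2}, shows that the $E$-node strata contribute zero: under $\sigma$ the transverse intersections on the conjugate bubbles $\Sigma^\pm$ come in matched pairs with opposite local signs and cancel out after summation. The second, patterned on~\cite[Lemma~6.2]{RealEnum}, identifies the intersections of $u|_{\Sigma^-}$ with $M$ as intersections of $u|_{\Sigma^+}$ with $\phi^*M$ via $\sigma$, so that their aggregate combines with the evaluation contribution $\ev_i^*\phi^*\mu$ at $z_i^-$ to yield precisely $-2\,\ev_i^*\mu$ on the right-hand side of~\eref{RTRR_e}, while preserving the clean $\lr{\mu,B_0}$ coefficient of $[\R D_{B_0,B'i}]^{\vir}$. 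The delicate point in both arguments is the careful bookkeeping of local intersection signs under the $\sigma$-pairing $p\leftrightarrow\sigma(p)$ between $u^{-1}(M)\cap\Sigma^-$ and $u^{-1}(\phi^*M)\cap\Sigma^+$, which is what produces the required cancellations.
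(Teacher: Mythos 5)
Your overall strategy matches the paper's: build a ``meromorphic'' section $s$ of $\cL_i^{\otimes d}$ (with $d=\lr{\mu,B}$) via intersections with a pseudocycle $M$, read off $\lr{\mu,B}\psi_i=-c_1(\cL_i^{\otimes d})$ from its divisor, and note that beyond the complex case one must handle the extra loci $\R D_{B_0,B'i}^-(M)$ (intersections on~$\Si^-$) and $\R E_i^-(M)$ ($E$-node strata meeting $M$ on $\P^1_{i;-}$). You also correctly locate the relevant model references. However, your descriptions of the two zero-contribution arguments are not what the paper does, and as written they would not close the proof.

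First, the paper reduces up front (by linearity and continuity) to $\phi^*\mu=-\mu$ and $\phi_*B=-B$; you omit this, which is why you end up trying to ``convert'' $\ev_i^*\phi^*\mu$ into something useful instead of it being $-\ev_i^*\mu$ outright. Second, the coefficient $-2\,\ev_i^*\mu$ comes \emph{entirely} from the two evaluations $\ev_i^{-1}(M)$ and $(\phi\circ\ev_i)^{-1}(M)$: the paper compares the two fiber-product orientations via a commutative diagram and uses that $\phi$ is orientation-\emph{reversing} on $X$ (this is where $n\not\in2\Z$ enters) together with $\phi^*\mu=-\mu$, $\phi_*[M]=[M]$. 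It is not obtained by ``combining'' the $\Si^-$ intersections with the $z_i^-$ evaluation. Third, neither $\R D_{B_0,B'i}^-(M)$ nor $\R E_i^-(M)$ is eliminated by a ``$\sigma$-paired intersections cancel in sign'' argument. In the paper, $\R D_{B_0,B'i}^-(M)$ does not contribute because the node $\Si_0\cap\Si^+$ is smoothed by a complex parameter $c$ whose conjugate $\ov c$ simultaneously smooths $\Si_0\cap\Si^-$, so near such a point $s$ is approximately $(\tn{const})\cdot|c|^{-2d'}$ and admits a small nonvanishing deformation; and $\R E_i^-(M)$ does not contribute because its normal bundle is a trivial \emph{real} line, $s$ is approximately $c(u)r^{-d/2}$ on a neighborhood of a circle, and the relevant Euler class on $S^1$ vanishes for degree reasons (\cite[Prop.~2.18A,B]{g0pr}). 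Both are local deformation/degree arguments about the gluing parameter, not sign-pairings of intersection points, and your sketch would need to be replaced by them. (You also have the citations swapped: the $\Si^-$ argument parallels \cite[Cor.~3.4(2)]{RealEnum2}, the $E$-node one parallels \cite[Lemma~6.2]{RealEnum}.)
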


\begin{proof}
We can assume that $\om(B)\!>\!0$ and $\phi_*B\!=\!-B$.
By the linearity and continuity of both sides of~\eref{RTRR_e} in~$\mu$,
we can also assume that $\phi^*\mu\!=\!-\mu$ and that
$\mu$ can be represented by a (generic) pseudocycle $\io\!:M\!\lra\!X$.
Let $d\!=\!\lr{\mu,B}$ and $\hb\!\in\!\R^+$ be the minimal value of $\lr{\om,u_*[\P^1]}$
for a non-constant $J$-holomorphic map $u\!:\P^1\!\lra\!X$.\\

\noindent
We denote by $\wt\fM$ the set of representatives $u\!:\Si\!\lra\!X$
for the elements of $\ov\fM_{0,\ell}^{\phi}(B;J)$
such~that 
\begin{enumerate}[label=$\bullet$,leftmargin=*]

\item $z_i^+\!=\!0$ on the irreducible component $\P^1_{i;+}\!\subset\!\Si$ 
containing it;

\item if $z_i^-\!\in\!\P^1_{i;+}$, then $z_i^-\!=\!\i$;

\item if $z_i^-\!\not\in\!\P^1_{i;+}$, then 
\begin{enumerate}[label=$\circ$,leftmargin=*]

\item the node of~$\P^1_{i;+}$ separating
it from the irreducible component~$\P^1_{i;-}$
of~$\Si$ containing~$z_i^-$ is the point $\i\!\in\!\P^1_{i;+}$;

\item the interior of the unit disk $\D\!\subset\!\P_{i;+}^1$ centered at $z_i^+\!=\!0$
contains no marked points other than~$z_i^+$ and no nodes;

\item either $\int_{\D}u^*\om\!=\!\hb/2$ and $\D\!\subset\!\P_{i;+}^1$ 
contains no marked points other than~$z_i^+$ and no nodes or
$\int_{\D}u^*\om\!<\!\hb/2$ and $\D\!\subset\!\P_{i;+}^1$ 
contains a marked point other than~$z_i^+$ or a node.\\

\end{enumerate}
\end{enumerate}

\noindent
We call two elements of $\wt\fM$ \sf{equivalent} if they differ by a reparametrization
of the domain which commutes with the involution on the domain 
and restricts to the identity on the irreducible component~$\P^1_{i;+}$
containing~$z_i^+$.
Gromov's convergence induces a topology on the set~$\wh\fM$ of the resulting equivalence
classes of elements of~$\wt\fM$.
The action of~$S^1$ on~$\P^1_{i;+}$ induces a continuous action on~$\wh\fM$ so~that 
$\ov\fM_{0,\ell}^{\phi}(B;J)$ is the quotient $\wh\fM/S^1$ and
$$L\!\equiv\!\wh\fM\!\times_{S^1}\!\C\lra \ov\fM_{0,\ell}^{\phi}(B;J)$$
is the universal {\it tangent} line bundle for the marked point~$z_i^+$.
Below we define a ``meromorphic" section of~$L^{\otimes d}$ and determine its zero/pole locus.\\

\noindent
A generic element $u\!:\P^1\!\lra\!X$ of $\wh\fM$ intersects $M$ transversely 
at points 
$$y_1^+(u),\ldots,y_{d_+}^+(u)\in\C^* \qquad\hbox{and}\qquad 
y_1^-(u),\ldots,y_{d_-}^-(u)\in\C^*$$
positively and negatively, respectively, so that $d\!=\!d_+\!-\!d_-$.
The resulting element
$$s\big([u]\big)\equiv 
\big[u,y_1^+(u)\ldots y_{d_+}^+(u)\big/y_1^-(u)\ldots y_{d_-}^-(u)\big]
\in L^{\otimes d}$$
depends only on the image $[u]\!\in\!\ov\fM_{0,\ell}^{\phi}(B;J)$ of $u\!\in\!\wh\fM$.
This construction induces a section of~$L^{\otimes d}$ over generic elements
of $\ov\fM_{0,\ell}^{\phi}(B;J)$.
This section extends to a ``meromorphic" section over all of~$\ov\fM_{0,\ell}^{\phi}(B;J)$.\\

\noindent
The section $s$ has a zero (resp.~pole) whenever $u$ meets $M$ positively (resp.~negatively)
at $z_i^+\!=\!0$ and a pole (resp.~zero) whenever $u$ meets~$M$ positively (resp.~negatively)
at $z_i^-\!=\!\i$.
It also has a pole (resp.~zero) whenever $u\!\in\!\R D_{B_0,B'i}$  meets~$M$ at any point of~$\Si_0$ 
(in the terminology around~\eref{RD2dfn_e}) positively (resp.~negatively).
The same is the case if $u\!\in\!\R D_{B_0,B'i}$  meets~$M$ at any point of~$\Si^-$
or the domain of~$u$ consists of two components, $\P^1_{i;+}$ and $\P^1_{i;-}$,
interchanged by the involution and 
$u$ meets~$M$ at any point of~$\P^1_{i;-}$.
We denote the two sets of such elements by~$\R D_{B_0,B'i}^-(M)$
and $\R E_i^-(M)$, respectively.
Thus,
\BE{RTRR_e9}\begin{split}
s^{-1}(0)=&\big(\ev_i^{-1}(M)\!-\!\{\phi\!\circ\!\ev_i\}^{-1}(M)\!\big)
-\!\!\!\sum_{\begin{subarray}{c}B_0,B'\in H_2(X;\Z)\\ B_0+\fd(B')=B\end{subarray}}
\hspace{-.3in}\lr{\mu,B_0}\R D_{B_0,B'i}\\
&\hspace{.5in}
\cup\!\!\!\bigcup_{\begin{subarray}{c}B_0,B'\in H_2(X;\Z)\\ B_0+\fd(B')=B\end{subarray}}
\hspace{-.35in}\big(\R D_{B_0,B'i}^-(M)\!\cup\!\R E_i^-(M)\!\big)
\subset \ov\fM_{0,\ell}^{\phi}(B;J)\,.
\end{split}\EE

\vspace{.2in}

\noindent
The rows in the commutative diagram
$$\xymatrix{\ov\fM_{0,\ell}^{\phi}(B;J) \ar[rr]^>>>>>>>>>>>{\phi\circ\ev_i} \ar[d]^{\id}&& 
X\ar[d]^{\phi} && \ar[ll]_{\io} M \ar[d]^{\id}\\ 
\ov\fM_{0,\ell}^{\phi}(B;J) \ar[rr]^>>>>>>>>>>>>>{\ev_i} &&
X && \ar[ll]_{\phi\circ\io} M}$$
induce two fiber-product orientations on
$$\ov\fM_{0,\ell}^{\phi}(B;J)\,_{\phi\circ\ev_i}\!\times_{\io}M
=\ov\fM_{0,\ell}^{\phi}(B;J)\,_{\ev_i}\!\times_{\phi\circ\io}M\,.$$
Since the diffeomorphism~$\phi$ of~$X$ is orientation-reversing,
these two orientations are opposite.
Since \hbox{$\phi^*\mu\!=\!-\mu$} and $\phi_*[M]\!=\![M]$, 
the difference in~\eref{RTRR_e9} contributes $2\,\ev_i^*{\mu}$ to 
\BE{RTRR_e11}-\lr{\mu,B}\psi_i=c_1(L^{\otimes d})
\in H^2\big(\ov\fM_{0,\ell}^{\phi}(B;J);\Q\big).\EE
The contribution of the sum in~\eref{RTRR_e9} is exactly as in 
the complex case of Proposition~\ref{CTRR_prp}.
We show below that the subsets $\R D_{B_0,B'i}^-(M)$ and $\R E_i^-(M)$ 
of $s^{-1}(\i)$ do not contribute to~\eref{RTRR_e11} and thus establish~\eref{RTRR_e}.\\

\noindent
After intersecting with a cycle corresponding to integration against 
a descendant cohomology class~$\eta$ on $\ov\fM_{0,\ell}^{\phi}(B;J)$,
as in the integrand in~\eref{RGWdfn_e},
we can assume that $\R D_{B_0,B'i}^-(M)$ is a finite collection of points.
Let $d'\!=\!\lr{\mu,B'}$, $u_0\!\in\!\R D_{B_0,B'i}^-(M)$, and
$$y_1^+,\ldots,y_{d_+}^+\in\C^* \qquad\hbox{and}\qquad 
y_1^-,\ldots,y_{d_-}^-\in\C^*,$$
with $d'\!=\!d_+\!-\!d_-$, be the points at which the restriction of~$u_0$ 
to~$\Si^-$ intersects~$M$ positively and negatively, respectively.
The complex smoothing parameter~$c$ corresponding to the node shared by~$\Si_0$ and~$\Si^+$
also smooths out  the node shared by~$\Si_0$ and~$\Si^-$ according to 
the smoothing parameter~$\ov{c}$, i.e.~a point $1/w\!\in\!\Si^-$ 
in the domain of~$u$ corresponds to the point $\sim\!1/|c|^2w$ in
the domain of the associated smoothed out map~$u_c$ meeting~$M$.
The section~$s$ on a neighborhood of~$u_0$ can thus be approximated by the~map
$$\C\lra\C, \qquad c\lra \big(y_1^-\!\ldots\!y_{d_-}^-/y_1^+\!\ldots\!y_{d_+}^+\big)|c|^{-2d'}\,.$$
It follows that $s$ can be deformed to a section with no zeros or poles  on this neighborhood.
This implies that $\R D_{B_0,B'i}^-(M)$ does not contribute to~$\lr{\eta,[s^{-1}(0)]}$.\\

\noindent
If $\R E_i^-(M)\!\neq\!\eset$, then $d\!\in\!2\Z$.
After intersecting with a cycle corresponding to integration against 
a descendant cohomology class~$\eta$ on $\ov\fM_{0,\ell}^{\phi}(B;J)$,
we can assume that $\R E_i^-(M)$ is a finite collection of circles.
The normal bundle to~$\R E_i^-(M)$ is the trivial line bundle.
The section~$s$ on a neighborhood of a circle $S^1\!\subset\!\R E_i^-(M)$
can be approximated by a~map
$$S^1\!\times\!\R\lra\C, \qquad (u,r)\lra c(u)r^{-d/2}\,,$$
for some continuous map $c\!:S^1\!\lra\!\C^*$.
The signed number of zeros of a deformation of~$s$ on this neighborhood is
$$-(d/2)\blr{e\big(\!(S^1\!\times\!\C)/(S^1\!\times\!c\R)\!\big),S^1}=0\,;$$
see Propositions~2.18A and~B in~\cite{g0pr}.
Thus, $\R E_i^-(M)$ does not contribute to~~$\lr{\eta,[s^{-1}(0)]}$ either.
\end{proof}

\noindent
As in the complex case in Section~\ref{CRecon_subs}, 
the properties of real GW-invariants of Sections~\ref{RGWs_subs1} and~\ref{RGWs_subs2}
can be reformulated in terms of differential equations on 
generating functions for these invariants.
We continue with the notation at the end of Section~\ref{CRecon_subs}.
For the purposes of~\eref{RWDVV_e2} below,  we assume each basis element~$e_j$ for~$H^*(X;\Q)$
lies in either~$H^*_+(X;\Q)$ or $H^*_-(X;\Q)$.
Let
$$\Phi^{\om,\phi}(\ft,q) =\sum_{B\in H_2(X)}\sum_{\ell\ge 0} \frac{1}{\ell!} 
\Bigg(\sum_{\begin{subarray}{c}B'\in H_2(X;\Z)\\ \fd(B')=B\end{subarray}} 
\blr{\underset{\ell}{\underbrace{\ft,\ldots,\ft}}}_{\!0,B'}^{\!\om}
\Bigg)q^B\,.$$
The (\sf{primary}) \sf{real genus~0 GW-potential} and 
the \sf{descendant real genus~g GW-potential} of $(X,\om,\phi)$ are the formal power series
\begin{equation*}\begin{split}
\Om^{\om,\phi}(\ft,q) &=\sum_{B\in H_2(X)}\sum_{\ell\ge 0} \frac{1}{\ell!} 
\blr{\underset{\ell}{\underbrace{\ft/2,\ldots,\ft/2}}}_{\!0,B}^{\!\om,\phi} q^B
\qquad\hbox{and}\\
\cF_g^{\om,\phi}(\wt\ft,q) &=\sum_{B\in H_2(X;\Z)}\sum_{\ell\ge0} \frac{1}{\ell!} 
\blr{\underset{\ell}{\underbrace{\wt\ft/2,\ldots,\wt\ft/2}}}_{\!g,B}^{\!\om,\phi} q^B\,,
\end{split}\end{equation*}
respectively.
The \sf{full descendant GW-potential} is the formal power~series
$$\cF^{\om,\phi}(\wt\ft,q) = \sum_{g\ge0}\cF_g^{\om,\phi}(\wt\ft,q)\la^{g-1}.$$
Thus, $\Om^{\om,\phi}$ is the coefficient of $\la^{-1}$ in $\cF^{\om,\phi}$ with $t_{0j}\!=\!t_j$ 
and $t_{aj}\!=\!0$ for $a\!>\!0$.\\

\noindent
The real string relation 
\ref{Rstr_it} is equivalent to the differential equation
$$\frac{\prt\cF^{\om,\phi}}{\prt t_{01}}  = 0\,.$$
The real dilaton relation 
\ref{Rdil_it} is equivalent to the differential equation
$$\frac{\prt\cF^{\om,\phi}}{\prt t_{11}}=
\bigg\{\la\frac{\prt}{\prt\la} +  \sum_{a=0}^\infty \sum_{j=1}^N t_{aj}
\frac{\prt }{\prt t_{aj}}\bigg\} \cF^{\om,\phi} \,.$$
The relations~\eref{RWDVV_e} are equivalent to the WDVV-type differential equations
\BE{RWDVV_e2}\begin{split}
&\sum_{j,k=1}^N \!\prt_{t_{i_1}}\prt_{t_{i_2}}\prt_{t_j}\Phi^{\om,\phi}\cdot g^{jk}
\prt_{t_k}\prt_{t_{i_3}}\Om^{\om,\phi} \\
&\hspace{1.5in}=(-1)^{|e_{i_2}||e_{i_3}|}
\sum_{j,k=1}^N \!\prt_{t_{i_1}}\prt_{t_{i_3}}\prt_{t_j}\Phi^{\om,\phi}\cdot g^{jk} 
\prt_{t_k}\prt_{t_{i_2}}\Om^{\om,\phi}
\end{split}\EE
with $i_1,i_2,i_3\!=\!1,2,\ldots,N$ such that $e_{i_1}\!\in\!H^*_+(X;\Q)$
and $e_{i_2},e_{i_3}\!\in\!H^*_-(X;\Q)$.

\section{Genus reduction and splitting}
\label{GenRedSpl_sec}

\noindent
Let $(X,\om,\phi)$ and $n$ be as in Theorem~\ref{main_thm}.
For the remainder of this paper,  we fix $J\!\in\!\cJ_{\om}^{\phi}$ and omit it from 
the notation for the moduli spaces of maps.
We define the \sf{arithmetic genus}~$g$ of a closed, possibly nodal and disconnected, 
Riemann surface~$(\Si,\fj)$ by 
$$g\!-\!1=\sum_{i=1}^m(g_i\!-\!1)$$
if $g_1,\ldots,g_m$ are the arithmetic genera of the topological components of~$\Si$.

\subsection{Comparisons of orientations}
\label{CompOrient_subs}

\noindent
For $g\!\in\!\Z$, $\ell\!\in\!\Z^{\ge0}$, and \hbox{$B\!\in\!H_2(X;\Z)$}, 
we denote by $\ov\fM_{g,\ell}^{\phi;\bu}(B)$
the moduli space of stable real $J$-holomorphic degree~$B$ maps 
from closed, possibly nodal and disconnected, symmetric Riemann surfaces of arithmetic genus~$g$
with $\ell$~conjugate pairs of marked points.
Thus,
$$\ov\fM_{g,\ell}^{\phi}(B)\subset\ov\fM_{g,\ell}^{\phi;\bu}(B)$$
is a union of topological components.
For each $i\!\in\![\ell]$, let 
$$\ev_i\!:\ov\fM_{g,\ell}^{\phi;\bu}(B)\lra X \qquad\hbox{and}\qquad
\cL_i\lra \ov\fM_{g,\ell}^{\phi;\bu}(B)$$
be the natural evaluation map and the universal tangent line orbi-bundle, respectively,
at the marked point~$z_i^+$.
Let 
$$\big(\cL_i\!\otimes\!\ov{\cL_i}\big)^{\R}=
\big\{rv\!\otimes_{\C}\!v\!\in\!\cL_i\!\otimes_{\C}\!\ov{\cL_i}\!:
v\!\in\!\cL_i,\,r\!\in\!\R\big\}.$$
This real line bundle over $\ov\fM_{g,\ell}^{\phi;\bu}(B)$ is canonically oriented
by the standard orientation of~$\R$.\\

\noindent
For each element $P\!\in\!\cP(g\!-\!1,\ell)$ as in~\eref{PwtPdfn_e}, let
\BE{RioPdfn_e0}\wt\io_{P;0}^{\C}\!: 
\ov\fM_P^{\C}(B)\!\equiv\!\bigsqcup_{\begin{subarray}{c}B_1,B_2\in H_2(X;\Z)\\
B_1+B_2=B \end{subarray}}\!\!\!\!\!\!\!\!\!\!\!\!
\ov\fM_{g_1,|I|+1}^{\phi}(B_1)\!\times\!\ov\fM_{g_2,|J|+1}^{\phi}(B_2)
\lra \ov\fM_{g-2,\ell+2}^{\phi;\bu}(B)\EE
be the open embedding obtained by taking the disjoint union of the two maps
and by re-ordering the pairs of marked points 
according to the bijection~\eref{Sperm_e} with~$(I,J)$ replaced by 
$(I\!\sqcup\!\{\ell\!+\!1\},J\!\sqcup\!\{\ell\!+\!2\})$.
For disjoint subsets $I,J\!\subset\!\Z^+$ and a map
$\u\!\equiv\!(\cC,u)$ from a marked curve~$\cC$ with the underlying Riemann surface~$(\Si,\fj)$
as in~\eref{fDIJdfn_e1}, define
$$\fD_{I,J}(\u)=\big(\fD_{I,J}(\cC),\fD(u)\!:\wh\Si\!\lra\!X\big) \qquad\hbox{with}\quad
\fD(u)|_{\{+\}\times\Si}=u,~~\fD(u)|_{\{-\}\times\Si}=\phi\!\circ\!u.$$
For each $\wt{P}\!\in\!\wt\cP(g,\ell)$ as in~\eref{PwtPdfn_e}, let
\BE{RiowtPdfn_e0}\wt\io_{\wt{P};0}^{\,\C}\!: 
\ov\fM_{\wt{P}}^{\C}(B)\!\equiv\!\bigsqcup_{(B',B_0)\in\cP_{\wt{P}}^{\phi}(B)}\!\!\!\!\!\!\!\!\!\!\!
\ov\fM_{g',|I\sqcup J|+1}(B')\!\times\!\ov\fM_{g_0,|K|+1}^{\phi}(B_0)
\lra \ov\fM_{g-2,\ell+2}^{\phi;\bu}(B)\EE
be the open embedding sending $([\u'],[\u_0])$ to the equivalence class of 
the marked map obtained by taking the disjoint union 
of~$\fD_{I\sqcup\{\ell+1\},J}(\u')$
and~$\u_0$ and by re-ordering the pairs of marked points 
according to the bijection~\eref{Sperm_e} with~$(I,J)$ replaced by
$(I\!\sqcup\!J\!\sqcup\!\{\ell+1\},K\!\sqcup\!\{\ell+2\})$.
If \hbox{$[\ell]\!=\!I\!\sqcup\!J$}, let
\BE{RiowtPEdfn_e0}\begin{split}
\wt\io_{g,(I,J);0}^{\,\C}\!: 
\ov\fM_{(g-1)/2,\ell+2}^{\C}(B)\equiv\!
\bigsqcup_{\begin{subarray}{c}B'\in H_2(X;\Z)\\ B'-\phi_*(B')=B \end{subarray}}\!\!\!\!\!\!\!\!\!\!
\ov\fM_{(g-1)/2,\ell+2}(B')&\lra \ov\fM_{g-2,\ell+2}^{\phi;\bu}(B) \quad\hbox{and}\\
\wt\io_{g,(I,J);0}^{\,E}\!: 
\ov\fM_{g/2,\ell+1}^E(B)\equiv\!
\bigsqcup_{\begin{subarray}{c}B'\in H_2(X;\Z)\\ B'-\phi_*(B')=B \end{subarray}}\!\!\!\!\!\!\!\!\!\!
\ov\fM_{g/2,\ell+1}(B')&\lra \ov\fM_{g-1,\ell+1}^{\phi;\bu}(B)
\end{split}\EE
be the open embedding sending $[\u']$ to the equivalence class 
of~$\fD_{I\sqcup\{\ell+1\},J\sqcup\{\ell+2\}}(\u')$ and 
the open embedding sending $[\u']$ to the equivalence class of~$\fD_{I\sqcup\{\ell+1\},J}(\u')$.
If $g\!\in\!2\Z$ (resp.~$g\!\not\in\!2\Z$), 
we define $\ov\fM_{(g-1)/2,\ell+2}^{\C}(B)$ (resp.~$\ov\fM_{g/2,\ell+1}^E(B)$)
to be the empty~set.\\

\noindent
Define
\begin{equation*}\begin{split}
\ov\fM_{g-2,\ell+2}'^{\phi;\bu}(B)
&=\big\{[\u]\!\in\!\ov\fM_{g-2,\ell+2}^{\phi;\bu}(B)\!:\,
\ev_{\ell+1}([\u])\!=\!\ev_{\ell+2}([\u])\!\big\},\\
\ov\fM_{g-2,\ell+2}''^{\phi;\bu}(B)
&=\big\{[\u]\!\in\!\ov\fM_{g-1,\ell+1}^{\phi;\bu}(B)\!:\,
\ev_{\ell+1}([\u])\!\in\!X^{\phi}\!\big\}.
\end{split}\end{equation*}
The short exact sequences
\begin{equation*}\begin{split}
0&\lra T\ov\fM_{g-2,\ell+2}'^{\phi;\bu}(B)\lra 
T\ov\fM_{g-2,\ell+2}^{\phi;\bu}(B)\!\big|_{\ov\fM_{g-2,\ell+2}'^{\phi;\bu}(B)} 
\lra \ev_{\ell+1}^{\,*}TX\lra 0,\\
0&\lra T\ov\fM_{g-1,\ell+1}''^{\phi;\bu}(B)\lra 
T\ov\fM_{g-1,\ell+1}^{\phi;\bu}(B)\!\big|_{\ov\fM_{g-1,\ell+1}''^{\phi;\bu}(B)} 
\lra \ev_{\ell+1}^{\,*}\cN_XX^{\phi}\lra 0
 \end{split}\end{equation*}
induce isomorphisms
\BE{SubIsom_e}\begin{split}
\La_{\R}^{\top}\big(T\big(\ov\fM_{g-2,\ell+2}^{\phi;\bu}(B)\!\big)\!\big)
\!\big|_{\ov\fM_{g-2,\ell+2}'^{\phi;\bu}(B)}
&\approx 
\La_{\R}^{\top}\big(T\big(\ov\fM_{g-2,\ell+2}'^{\phi;\bu}(B)\!\big)\!\big)
\!\otimes\! \ev_{\ell+1}^*\big(\La_{\R}^{\top}(TX)\!\big),\\
\La_{\R}^{\top}\big(T\big(\ov\fM_{g-1,\ell+1}^{\phi;\bu}(B)\!\big)\!\big)
\!\big|_{\ov\fM_{g-1,\ell+1}''^{\phi;\bu}(B)}
&\approx 
\La_{\R}^{\top}\big(T\big(\ov\fM_{g-1,\ell+1}''^{\phi;\bu}(B)\!\big)\!\big)
\!\otimes\! \ev_{\ell+1}^*\big(\La_{\R}^{\top}(\cN_XX^{\phi})\!\big)
\end{split}\EE
of real line bundles over $\ov\fM_{g-2,\ell+2}'^{\phi;\bu}(B)$
and $\ov\fM_{g-1,\ell+1}''^{\phi;\bu}(B)$, respectively.\\

\noindent
We denote by 
\BE{Riogldfn_e2}
\wt\io_{g,\ell}'\!:\ov\fM_{g-2,\ell+2}'^{\phi;\bu}(B)\lra\ov\fM_{g,\ell}^{\phi;\bu}(B)
\quad\hbox{and}\quad
\wt\io_{g,\ell}''\!:\ov\fM_{g-1,\ell+1}''^{\phi;\bu}(B)\lra\ov\fM_{g,\ell}^{\phi;\bu}(B)\EE
the immersion obtained by identifying the marked points~$z_{\ell+1}^+$ and~$z_{\ell+1}^-$ 
of the domain of each map with~$z_{\ell+2}^+$ and~$z_{\ell+2}^-$, respectively,
to form a conjugate pair of nodes
and the immersion obtained by identifying the marked point~$z_{\ell+1}^+$ 
of the domain of each map with~$z_{\ell+1}^-$ to form an $E$-node.
The first immersion is generically $4\!:\!1$ onto its image,
while the second is generically $2\!:\!1$.
There are canonical isomorphisms
\BE{fMcNioisom_e}\begin{split}
\cN\wt\io_{g,\ell}'&\equiv 
\frac{\wt\io_{g,\ell}'^{\,*}T(\ov\fM_{g,\ell}^{\phi;\bu}(B)\!)}
{\nd\wt\io_{g,\ell}'(T(\ov\fM_{g-2,\ell+2}'^{\phi;\bu}(B)\!)\!)}
\approx \cL_{\ell+1}\!\otimes_{\C}\!\cL_{\ell+2},\\
\cN\wt\io_{g,\ell}''&\equiv 
\frac{\wt\io_{g,\ell}''^{\,*}T(\ov\fM_{g,\ell}^{\phi;\bu}(B)\!)}
{\nd\io_{g,\ell}''(T(\ov\fM_{g-1,\ell+1}''^{\phi;\bu}(B)\!)\!)}
\approx \big(\cL_{\ell+1}\!\otimes\!\ov{\cL_{\ell+1}}\big)^{\R}\,.
\end{split}\EE 
They induce isomorphisms
\BE{CompOrient_e0}\begin{split} 
\wt\io_{g,\ell}'^{\,*}\big(\La_{\R}^{\top}
\big(T(\ov\fM_{g,\ell}^{\phi;\bu}(B)\!)\!\big)\!\big)
&\approx \La_{\R}^{\top}\big(T\big(\ov\fM_{g-2,\ell+2}'^{\phi;\bu}(B)\!\big)\!\big)
\!\otimes\! \La_{\R}^2\big(\cL_{\ell+1}\!\otimes_{\C}\!\cL_{\ell+2}\big),\\
\wt\io_{g,\ell}''^{\,*}\big(\La_{\R}^{\top}
\big(T(\ov\fM_{g,\ell}^{\phi;\bu}(B)\!)\!\big)\!\big)
&\approx \La_{\R}^{\top}\big(T\big(\ov\fM_{g-1,\ell+1}''^{\phi;\bu}(B)\!\big)\!\big)
\!\otimes\! \big(\cL_{\ell+1}\!\otimes\!\ov{\cL_{\ell+1}}\big)^{\R}
\end{split}\EE
of real line bundles over $\ov\fM_{g-2,\ell+2}'^{\phi;\bu}(B)$
and $\ov\fM_{g-1,\ell+1}''^{\phi;\bu}(B)$, respectively.\\

\noindent
The symplectic orientation on~$X$ and the orientation on~$X^{\phi}$ induced by
a real orientation~$(L,[\psi],\fs)$ on~$(X,\om,\phi)$  
determines an orientation on~$\cN_XX^{\phi}$ via the short exact sequence
$$0\lra TX^{\phi}\lra TX|_{X^{\phi}}\lra \cN_XX^{\phi}\lra0$$
of real vector bundles over~$X^{\phi}$.
By \cite[Section~3.3]{RealGWsGeom}, $(L,[\psi],\fs)$  
endows the moduli space $\ov\fM_{g,\ell}^{\phi;\bu}(B)$ with an orientation and
a virtual fundamental class of dimension~\eref{RfMdim_e}.
The space~$\ov\fM_{g,\ell}^{\phi;\bu}(B)$ naturally splits into unions
of topological components, such as $\ov\fM_{g,\ell}^{\phi}(B)$
and the images of~$\wt\io_{P;0}^{\,\C}$ with $P\!\in\!\cP(g\!+\!1,\ell\!-\!2)$,
$\wt\io_{\wt{P};0}^{\,\C}$ with $\wt{P}\!\in\!\wt\cP(g\!+\!2,\ell\!-\!2)$,
$\wt\io_{g+2,(I,J);0}^{\,\C}$ with \hbox{$[\ell\!-\!2]\!=\!I\!\sqcup\!J$},
and $\wt\io_{g+1,(I,J);0}^{\,E}$ with \hbox{$[\ell\!-\!1]\!=\!I\!\sqcup\!J$}.
By the first statements of \cite[Propositions~3.12/3]{RealGWsGeom}, 
the signs~$(-1)^{\ve}$ of the open embeddings in~\eref{RioPdfn_e0}-\eref{RiowtPEdfn_e0}
with respect to the orientations induced by $(L,[\psi],\fs)$ 
and the complex orientations on the complex moduli space $\ov\fM_{g',\ell'}(B')$ 
are given~by
\BE{sgnwtio_e}\begin{split}
&\ve_{P;0}^{\C}=\ve_n(P), 
\qquad \ve_{\wt{P};0}^{\C}
\big|_{\ov\fM_{g',|I\sqcup J|+1}(B')\times\ov\fM_{g_0,|K|+1}^{\phi}(B_0)}
=\blr{c_1(L),\phi_*(B')\!}\!+\!|J|,\\
&\hspace{.6in}\ve_{g,(I,J);0}^{\,\C}\big|_{\ov\fM_{(g-1)/2,\ell+2}(B')}
=\blr{c_1(L),\phi_*(B')\!}\!+\!|J|\!+\!1, \\
&\hspace{.8in}\ve_{g,(I,J);0}^{\,E}\big|_{\ov\fM_{g/2,\ell+1}(B')}
=\blr{c_1(L),\phi_*(B')\!}\!+\!|J|,
\end{split}\EE
respectively.\\

\noindent
Along with the canonical orientations on~$\cL_{\ell+1}\!\otimes_{\C}\!\cL_{\ell+2}$
and~$(\cL_{\ell+1}\!\otimes\!\ov{\cL_{\ell+1}})^{\R}$,
$(L,[\psi],\fs)$ also determines orientations on~$\ov\fM_{g-2,\ell+2}'^{\phi;\bu}(B)$
and~$\ov\fM_{g-1,\ell+1}''^{\phi;\bu}(B)$ via~\eref{SubIsom_e} and~\eref{CompOrient_e0}.
By the first statement of \cite[Proposition~3.14]{RealGWsGeom}, 
the orientations on~$\ov\fM_{g-2,\ell+2}'^{\phi;\bu}(B)$
defined via the first isomorphisms in~\eref{SubIsom_e} and~\eref{CompOrient_e0} are the same.
By the first statement of \cite[Proposition~3.16]{RealGWsGeom}, 
the orientations on~$\ov\fM_{g-1,\ell+1}''^{\phi;\bu}(B)$
defined via the second isomorphisms in~\eref{SubIsom_e} and~\eref{CompOrient_e0} are opposite.

\subsection{Proof of~\ref{RGenusRed_it} and~\ref{RSplit_it}}
\label{RSplitPf_subs}

\noindent
Fix a tuple $\mu\!\equiv\!(\mu_i)_{i\in[\ell]}$ of elements of $H^*(X;\Q)$.
Let  $\ga'\!\in\!H^*(\R\ov\cM_{g-1,\ell+1};\Q)$ and
$$\ov\fM_{g-1,\ell+1}''^{\phi}(B)=\ov\fM_{g-1,\ell+1}''^{\phi;\bu}(B)
\!\cap\!\ov\fM_{g-1,\ell+1}^{\phi}(B).$$
By the definition of~$\cI_{g,\ell,B}^{\om,\phi}$ below~\eref{RIdfn_e} and~\eref{cupcap_e},
\BE{SplitE_e}\begin{split}
&\int_{\R\ov\cM_{g-1,\ell+1}}\!\!\ga'
\cI_{g-1,\ell+1,B}^{\om,\phi}\big(\mu,\PD_X^{-1}\big([X^{\phi}]_X\big)\!\big)\\
&\hspace{1in}=
\int_{[\ov\fM_{g-1,\ell+1}^{\phi}(B)]^{\vir}}\!\! (\ff^*\ga')
\big(\ev_1^*\mu_1\big)\ldots\!\big(\ev_{\ell}^*\mu_{\ell}\big)
\big(\ev_{\ell+1}^{\,*}\big(\PD_X^{-1}\big([X^{\phi}]_X\big)\!\big)\!\big)\\
&\hspace{1in}
=\int_{[\ov\fM_{g-1,\ell+1}''^{\phi}(B)]^{\vir}}\! (\ff^*\ga')
\wt\io_{g,\ell}''^{\,*}
\big(\!(\ev_1^*\mu_1)\ldots\!(\ev_{\ell}^*\mu_{\ell})\!\big)\,.
\end{split}\EE
The last virtual fundamental class above is taken with respect to the orientation
on~$\ov\fM_{g-1,\ell+1}''^{\phi}(B)$ induced
by~$(L,[\psi],\fs)$ via the second isomorphism in~\eref{SubIsom_e}.
By~\eref{pushintprp_e} with $r\!=\!1$ and~\ref{Rgrad_it},
\BE{SplitE_e2}\begin{split}
&\int_{\R\ov\cM_{g-1,\ell+1}}\!\!\ga'
\big(\io_{g,\ell}^E\big)^{\!*}\!
\big(\cI_{g,\ell,B}^{\om,\phi}(\mu)\!\big)
=(-1)^{g-1+|\mu|}\!\!\int_{\R\ov\cM_{g,\ell}}\!\!\big(\io_{g,\ell}^E\big)\!_*\!(\ga')
\cI_{g,\ell,B}^{\om,\phi}(\mu)\\
&\hspace{1.2in}=(-1)^{g-1+|\mu|}\!\!\int_{[\ov\fM_{g,\ell}^{\phi}(B;J)]^{\vir}}\! 
\big(\ff^*\!\big(\!(\io_{g,\ell}^E)_*(\ga')\!\big)\!\big)
(\ev_1^*\mu_1)\ldots\!(\ev_{\ell}^*\mu_{\ell})\,.
\end{split}\EE

\vspace{.18in}

\noindent
The restriction~$\wt\io_{g,\ell}^E$ of $\wt\io_{g,\ell}''$ to~$\ov\fM_{g-1,\ell+1}''^{\phi}(B)$
is the fiber product of the forgetful morphism~$\ff$ in~\eref{Rffdfn_e}
and the immersion~$\io_{g,\ell}^E$ in~\eref{Riogldfn_e}.
The bundle homomorphism 
$$\nd\ff\!: \cN\wt\io_{g,\ell}^E\!=\!\cN\wt\io_{g,\ell}''\big|_{\ov\fM_{g-1,\ell+1}''^{\phi}(B)}
\lra  \big\{\ff|_{\ov\fM_{g-1,\ell+1}''^{\phi}(B)}\big\}^{\!*}\cN\io_{g,\ell}^E$$
induced by the differential of~$\ff$, with $\ff$ on the right-hand side 
as in~\eref{Rffdfn_e} with~$(g,\ell)$ replaced by \hbox{$(g\!-\!1,\ell\!+\!1)$}, 
is generically an isomorphism that intertwines the two canonical orientations.
Thus, the integral on the right-hand side of~\eref{SplitE_e2} equals to the right-hand side 
in~\eref{SplitE_e} with the orientation on~$\ov\fM_{g-1,\ell+1}''^{\phi}(B)$
induced by~$(L,[\psi],\fs)$ via the second isomorphism in~\eref{CompOrient_e0}.
Along with the first statement of \cite[Proposition~3.16]{RealGWsGeom},
this implies the second claim in~\ref{RGenusRed_it}.\\

\noindent
Let $[\ell]\!=\!I\!\sqcup\!J$ and $g\!\in\!\Z^{\ge0}$.
The~spaces 
\begin{equation*}\begin{split}
\ov\fM_{(g-1)/2,\ell+1}'(B)&\equiv\big\{[\u]\!\in\!\ov\fM_{(g-1)/2,\ell+2}^{\C}(B)\!:\,
\ev_{\ell+1}([\u])\!=\!\phi\big(\ev_{\ell+1}([\u])\!\big)\!\big\}\qquad\hbox{and}\\
\ov\fM_{g/2,\ell+1}''(B)&\equiv\big\{[\u]\!\in\!\ov\fM_{g/2,\ell+1}^E(B)\!:\,
\ev_{\ell+1}([\u])\!\in\!X^{\phi}\!\big\}
\end{split}\end{equation*}
inherit orientations from the complex orientations of 
$\ov\fM_{(g-1)/2,\ell+2}^{\C}(B)$ and $\ov\fM_{g/2,\ell+1}^E(B)$
via the analogues of the isomorphisms in~\eref{SubIsom_e}.
Furthermore,
$$\wt\io_{g,(I,J);0}^{\,\C}\big(\ov\fM_{(g-1)/2,\ell+2}'(B)\!\big)
\subset \ov\fM_{g-2,\ell+2}'^{\phi;\bu}(B)
\quad\hbox{and}\quad
\wt\io_{g,(I,J);0}^{\,E}\big(\ov\fM_{g/2,\ell+1}''(B)\!\big)
\subset \ov\fM_{g-1,\ell+1}''^{\phi;\bu}(B).$$

\vspace{.18in}

\noindent
Suppose in addition $g\!\in\!2\Z^{\ge0}$ and $\ga'\!\in\!H^*(\ov\cM_{g/2,\ell+1};\Q)$.
The composition~$\wt\io_{g,(I,J)}^{\,E}$ of the restriction of~$\wt\io_{g,(I,J);0}^{\,E}$ 
to~$\ov\fM_{g/2,\ell+1}''(B)$ with~$\wt\io_{g,\ell}''$ 
is the fiber product of the forgetful morphism~$\ff$ in~\eref{Rffdfn_e}
and the immersion~$\io_{g,(I,J)}^E$ in~\eref{RioIJEdfn_e}.
The bundle homomorphism 
$$\nd\ff\!: \cN\wt\io_{g,(I,J)}^{\,E}\!\approx\!
\wt\io_{g,(I,J);0}^{\,E\,*}\cN\wt\io_{g,\ell}''\big|_{\ov\fM_{g/2,\ell+1}''(B)}
\lra  \big\{\ff|_{\ov\fM_{g/2,\ell+1}''(B)}\big\}^{\!*}\cN\io_{g,(I,J)}^E$$
induced by the differential of~$\ff$, with $\ff$ on the right-hand side as in~\eref{ffdfn_e} 
with~$(g,\ell)$ replaced by \hbox{$(g/2,\ell\!+\!1)$}, 
is generically an isomorphism that intertwines the two canonical orientations.
By the first statements of \cite[Propositions~3.13/6]{RealGWsGeom},
the orientation on $\ov\fM_{g/2,\ell+1}''(B)$
induced from the canonical orientation of~$\cN\wt\io_{g,(I,J)}^{\,E}$
via the analogue of the second isomorphism in~\eref{CompOrient_e0} differs from
the above orientation by~$-1$ to the power of~$\ve_{g,(I,J);0}^{\,E}\!+\!1$.
Along with~\eref{SplitE_e} and~\eref{SplitE_e2} with
$$\R\ov\cM_{g-1,\ell+1}, \quad \cI_{g-1,\ell+1,B}^{\om,\phi}, \quad 
\ov\fM_{g-1,\ell+1}^{\phi}(B),\quad \ov\fM_{g-1,\ell+1}''^{\phi}(B), \quad
\wt\io_{g,\ell}'', ~~\hbox{and}~~ \io_{g,\ell}^E$$ 
replaced~by 
$$\ov\cM_{g/2,\ell+1}, \quad \cI_{g/2,(I,J),B'}^{\om,\phi,L}, \quad 
\ov\fM_{g/2,\ell+1}(B'), 
\quad \ov\fM_{g/2,\ell+1}''(B)\!\cap\!\ov\fM_{g/2,\ell+1}(B'),
\quad\wt\io_{g,(I,J)}^E, ~~\hbox{and}~~ \io_{g,(I,J)}^E$$
respectively, this implies the last claim in~\ref{RSplit_it}.\\

\noindent
Let  $\ga'\!\in\!H^*(\R\ov\cM_{g-2,\ell+2};\Q)$ and
$$\ov\fM_{g-2,\ell+2}'^{\phi}(B)=\ov\fM_{g-2,\ell+2}'^{\phi;\bu}(B)
\!\cap\!\ov\fM_{g-2,\ell+2}^{\phi}(B).$$
By the definition of~$\cI_{g,\ell,B}^{\om,\phi}$ below~\eref{RIdfn_e},
\eref{DeXPD_e}, and~\eref{cupcap_e},
\BE{SplitC_e}\begin{split}
&\sum_{i,j=1}^N\!g^{ij}\!\!\!\int_{\R\ov\cM_{g-2,\ell+2}}\!\!\ga'
\cI_{g-2,\ell+2,B}^{\om,\phi}(\mu,e_i,e_j)\\
&\hspace{.5in}=
\int_{[\ov\fM_{g-2,\ell+2}^{\phi}(B)]^{\vir}}\!\! (\ff^*\ga')
\big(\ev_1^*\mu_1\big)\ldots\!\big(\ev_{\ell}^*\mu_{\ell}\big)
\big(\!\big\{\ev_{\ell+1}\!\times\!\ev_{\ell+2}\big\}^{\!*}\big(\PD_{X^2}^{-1}(\De_X)\!\big)\!\big)\\
&\hspace{.5in}
=\int_{[\ov\fM_{g-2,\ell+2}'^{\phi}(B)]^{\vir}}\! (\ff^*\ga')
\wt\io_{g,\ell}'^{\,*}
\big(\!(\ev_1^*\mu_1)\ldots\!(\ev_{\ell}^*\mu_{\ell})\!\big)\,.
\end{split}\EE
The last virtual fundamental class above is taken with respect to the orientation
on~$\ov\fM_{g-2,\ell+2}'^{\phi}(B)$ induced
by~$(L,[\psi],\fs)$ via the first isomorphism in~\eref{SubIsom_e}.
By~\eref{pushintprp_e} with $r\!=\!2$,
\BE{SplitC_e2}\begin{split}
\int_{\R\ov\cM_{g-2,\ell+2}}\!\!\ga'
\big(\io_{g,\ell}^{\C}\big)^{\!*}\!\big(\cI_{g,\ell,B}^{\om,\phi}(\mu)\!\big)
&=\int_{\R\ov\cM_{g,\ell}}\!\!\big(\io_{g,\ell}^{\C}\big)\!_*\!(\ga')
\cI_{g,\ell,B}^{\om,\phi}(\mu)\\
&=\int_{[\ov\fM_{g,\ell}^{\phi}(B;J)]^{\vir}}\! 
\big(\ff^*\!\big(\!(\io_{g,\ell}^{\C})_*(\ga')\!\big)\!\big)
(\ev_1^*\mu_1)\ldots\!(\ev_{\ell}^*\mu_{\ell})\,.
\end{split}\EE

\vspace{.18in}

\noindent
The restriction~$\wt\io_{g,\ell}^{\C}$ of $\wt\io_{g,\ell}'$ to~$\ov\fM_{g-2,\ell+2}'^{\phi}(B)$
is the fiber product of the forgetful morphism~$\ff$ in~\eref{Rffdfn_e}
and the immersion~$\io_{g,\ell}^{\C}$ in~\eref{Riogldfn_e}.
The bundle homomorphism 
$$\nd\ff\!: \cN\wt\io_{g,\ell}^{\C}\!=\!\cN\wt\io_{g,\ell}'\big|_{\ov\fM_{g-2,\ell+2}'^{\phi}(B)}
\lra  \big\{\ff|_{\ov\fM_{g-2,\ell+2}'^{\phi}(B)}\big\}^{\!*}\cN\io_{g,\ell}^{\C}$$
induced by the differential of~$\ff$, with $\ff$ on the right-hand side 
as in~\eref{Rffdfn_e} with~$(g,\ell)$ replaced by \hbox{$(g\!-\!2,\ell\!+\!2)$}, 
is $\C$-linear with respect to the identifications in~\eref{Riogldisom_e} and~\eref{fMcNioisom_e}
and is generically an isomorphism.
Thus, the right-hand side of~\eref{SplitC_e2} equals to the right-hand side 
in~\eref{SplitC_e} with the orientation on~$\ov\fM_{g-2,\ell+2}'^{\phi}(B)$
induced by~$(L,[\psi],\fs)$ via the first isomorphism in~\eref{CompOrient_e0}.
Along with the first statement of \cite[Proposition~3.14]{RealGWsGeom},
this implies the first claim in~\ref{RGenusRed_it}.\\

\noindent
Suppose $[\ell]\!=\!I\!\sqcup\!J$, $g\!\in\!\Z^+\!-\!2\Z$, and 
$\ga'\!\in\!H^*(\ov\cM_{(g-1)/2,\ell+2};\Q)$.
The composition~$\wt\io_{g,(I,J)}^{\,\C}$ of the restriction of~$\wt\io_{g,(I,J);0}^{\,\C}$ 
to~$\ov\fM_{(g-1)/2,\ell+1}'(B)$ with~$\wt\io_{g,\ell}'$ 
is the fiber product of the forgetful morphism~$\ff$ in~\eref{Rffdfn_e}
and the immersion~$\io_{g,(I,J)}^{\C}$ in~\eref{RioIJEdfn_e}.
The bundle homomorphism 
$$\nd\ff\!: \cN\wt\io_{g,(I,J)}^{\,\C}\!\approx\!
\wt\io_{g,(I,J);0}^{\,\C\,*}\cN\wt\io_{g,\ell}'\big|_{\ov\fM_{(g-1)/2,\ell+2}'(B)}
\lra  \big\{\ff|_{\ov\fM_{(g-1)/2,\ell+2}'(B)}\big\}^{\!*}\cN\io_{g,(I,J)}^{\C}$$
induced by the differential of~$\ff$, with $\ff$ on the right-hand side as in~\eref{ffdfn_e} 
with~$(g,\ell)$ replaced by \hbox{$(\!(g\!-\!1)/2,\ell\!+\!2)$}, 
is $\C$-linear with respect to the identifications in~\eref{RioPprp_e} and~\eref{fMcNioisom_e}
and is generically an isomorphism.
By the first statements of \cite[Propositions~3.13/4]{RealGWsGeom},
the orientation on $\ov\fM_{(g-1)/2,\ell+2}'(B)$
induced from the complex orientation of~$\cN\wt\io_{g,(I,J)}^{\,\C}$
via the analogue of the first isomorphism in~\eref{CompOrient_e0} differs from
the orientation induced via the analogue of the first isomorphism in~\eref{SubIsom_e}
by~$-1$ to the power of~$\ve_{g,(I,J);0}^{\,\C}$.
Along with~\eref{SplitC_e} and~\eref{SplitC_e2} with
$$\R\ov\cM_{g-2,\ell+2}, \quad \cI_{g-2,\ell+2,B}^{\om,\phi}, \quad 
\ov\fM_{g-2,\ell+2}^{\phi}(B),\quad \ov\fM_{g-2,\ell+2}'^{\phi}(B), \quad
\wt\io_{g,\ell}', ~~\hbox{and}~~ \io_{g,\ell}^{\C}$$ 
replaced~by 
$$\ov\cM_{(g-1)/2,\ell+2}, ~~ \cI_{(g-1)/2,(I,J),B'}^{\om,\phi,L}, ~~ 
\ov\fM_{(g-1)/2,\ell+2}(B'), 
~~ \ov\fM_{(g-1)/2,\ell+1}'(B)\!\cap\!\ov\fM_{(g-1)/2,\ell+2}(B'),$$
$\wt\io_{g,(I,J)}^{\C}$, and $\io_{g,(I,J)}^{\C}$,
respectively, this implies the third claim in~\ref{RSplit_it}.\\

\noindent
For $I\!\equiv\!\{i_1,\ldots,i_m\}\!\subset\![\ell]$ with $i_1\!<\!\ldots\!<\!i_m$, let
$$\ev_I^*\mu=\big(\ev_1^{\,*}\mu_{i_1}\big)\!\ldots\!\big(\ev_m^{\,*}\mu_{i_m}\big).$$
For $P\!\in\!\cP(g\!-\!1,\ell)$ as in~\eref{PwtPdfn_e}, define
\begin{equation*}\begin{split}
\ov\fM_P'^{\C}(B)&=\big\{\!\big([\u_1],[\u_2]\big)\!\in\!\ov\fM_P^{\C}(B)\!:
\ev_{|I|+1}([\u_1])\!=\!\ev_{|J|+1}([\u_2])\!\big\},\\
\big[\ov\fM_P^{\C}(B)\big]^{\vir}
&=\sum_{\begin{subarray}{c}B_1,B_2\in H_2(X;\Z)\\ B_1+B_2=B \end{subarray}}
\!\!\!\!\!\!\!\!\!\!\!\!\big[\ov\fM_{g_1,|I|+1}^{\phi}(B_1)\big]^{\vir}\!\times\!
\big[\ov\fM_{g_2,|J|+1}^{\phi}(B_2)\big]^{\vir},\\
\big[\ov\fM_P'^{\C}(B)\big]^{\vir}
&=\big\{\ev_{|I|+1}\!\times\!\ev_{|J|+1}\big\}^{\!*}\!\big(\PD_{X^2}^{-1}(\De_X)\!\big)
\!\cap\!\big[\ov\fM_P^{\C}(B)\big]^{\vir}.
\end{split}\end{equation*}
For $\wt{P}\!\in\!\wt\cP(g,\ell)$ as in~\eref{PwtPdfn_e}, we similarly define
\begin{equation*}\begin{split}
\ov\fM_{\wt{P}}'^{\C}(B)&=\big\{\!\big([\u'],[\u_0]\big)\!\in\!\ov\fM_{\wt{P}}^{\C}(B)\!:
\ev_{|I\sqcup J|+1}([\u'])\!=\!\ev_{|K|+1}([\u_0])\!\big\},\\
\big[\ov\fM_{\wt{P}}^{\C}(B)\big]^{\vir}
&=\sum_{(B',B_0)\in\cP_{\wt{P}}^{\phi}(B)}\!\!\!\!\!\!\!\!\!\!\!
\big[\ov\fM_{g',|I\sqcup J|+1}(B')\big]^{\vir}\!\times\!
\big[\ov\fM_{g_0,|K|+1}^{\phi}(B_0)\big]^{\vir},\\
\big[\ov\fM_{\wt{P}}'^{\C}(B)\big]^{\vir}
&=\big\{\ev_{|I\sqcup J|+1}\!\times\!\ev_{|K|+1}\big\}^{\!*}\!\big(\PD_{X^2}^{-1}(\De_X)\!\big)
\!\cap\!\big[\ov\fM_{\wt{P}}^{\C}(B)\big]^{\vir}\,.
\end{split}\end{equation*}

\vspace{.18in}

\noindent
With $P\!\in\!\cP(g\!-\!1,\ell)$ as in~\eref{PwtPdfn_e}, let 
$$\ga_1\!\in\!H^*(\R\ov\cM_{g_1,|I|+1};\Q) \quad\hbox{and}\quad
\ga_2\!\in\!H^*(\R\ov\cM_{g_2,|J|+1};\Q).$$
By the definition of~$\cI_{g,\ell,B}^{\om,\phi}$ below~\eref{RIdfn_e},
\eref{FCprod_e}, \ref{Rgrad_it}, \ref{Rcov_it}, \eref{DeXPD_e}, and~\eref{cupcap_e},
\BE{SplitCP_e}\begin{split}
&\sum_{\begin{subarray}{c}B_1,B_2\in H_2(X;\Z)\\ B_1+B_2=B \end{subarray}}
\sum_{i,j=1}^N\!g^{ij}\!\!\!\int_{\R\ov\cM_{g_1,|I|+1}\times\R\ov\cM_{g_2,|J|+1}}\!\!\!\!\!
\big(\ga_1\!\times\!\ga_2\big)
\big(\cI_{g_1,|I|+1,B_1}^{\om,\phi}(\mu_I,e_i)\!\times\!\cI_{g_2,|J|+1,B_2}^{\om,\phi}(e_j,\mu_J)\!\big)\\
&=(-1)^{(g_1-1)|\ga_2|}\!\!\!\!
\int_{[\ov\fM_P^{\C}(B)]^{\vir}}\!\!\!\!\big(\!(\ff^*\ga_1)\!\times\!(\ff^*\ga_2)\!\big)
\big(\!(\ev_I^*\mu)\!\times\!(\ev_J^*\mu)\!\big)
\big(\!\big\{\ev_{|I|+1}\!\times\!\ev_{|J|+1}\big\}^{\!*}\!\big(\PD_{X^2}^{-1}(\De_X)\!\big)\!\big)\\
&=(-1)^{(g_1-1)|\ga_2|+\ve(P,\mu)}\!\!\!\!\int_{[\ov\fM_P'^{\C}(B)]^{\vir}}
\!\!\!\!\big(\!\{\ff\!\times\!\ff\}^*(\ga_1\!\times\!\ga_2)\!\big)
\wt\io_P^{\C\,*}\big(\!(\ev_1^*\mu_1)\ldots\!(\ev_{\ell}^*\mu_{\ell})\!\big)\,.
\end{split}\EE
By~\eref{pushintprp_e} with $r\!=\!2$,
\BE{SplitCP_e2}\begin{split}
\int_{\R\ov\cM_{g_1,|I|+1}\times\R\ov\cM_{g_2,|J|+1}}\!\!(\ga_1\!\times\!\ga_2)
\big(\io_P^{\C}\big)^{\!*}\!\big(\cI_{g,\ell,B}^{\om,\phi}(\mu)\!\big)
=\int_{\R\ov\cM_{g,\ell}}\!\!\big(\io_P^{\C}\big)\!_*\!(\ga_1\!\times\!\ga_2)
\cI_{g,\ell,B}^{\om,\phi}(\mu)&\\
\hspace{1in}=\int_{[\ov\fM_{g,\ell}^{\phi}(B;J)]^{\vir}}\! 
\big(\ff^*\!\big(\!(\io_P^{\C})_*(\ga_1\!\times\!\ga_2)\!\big)\!\big)
(\ev_1^*\mu_1)\ldots\!(\ev_{\ell}^*\mu_{\ell})&\,.
\end{split}\EE

\vspace{.18in}

\noindent
The image of $\ov\fM_P'^{\C}(B)$ under $\wt\io_{P;0}^{\C}$ is contained 
in $\ov\fM_{g-2,\ell+2}'^{\phi;\bu}(B)$.
The composition~$\wt\io_P^{\C}$ of the restriction of~$\wt\io_{P;0}^{\C}$ 
to~$\ov\fM_P'^{\C}(B)$ with~$\wt\io_{g,\ell}'$ 
is the fiber product of the forgetful morphism~$\ff$ in~\eref{Rffdfn_e}
and the immersion~$\io_P^{\C}$ in~\eref{RioPdfn_e}.
The bundle homomorphism 
$$\nd\ff\!: \cN\wt\io_P^{\C}\!\approx\!
\wt\io_{P;0}^{\C\,*}\cN\wt\io_{g,\ell}'\big|_{\ov\fM_P'^{\C}(B)}
\lra  \big\{\ff\!\times\!\ff|_{\ov\fM_P'^{\C}(B)}\big\}^{\!*}\cN\io_P^{\C}$$
induced by the differential of~$\ff$, 
with $\ff$ on the right-hand side as in~\eref{Rffdfn_e} 
with~$(g,\ell)$ replaced by \hbox{$(g_1,|I|\!+\!1)$} and \hbox{$(g_2,|J|\!+\!1)$},
is $\C$-linear with respect to the identifications in~\eref{RioPprp_e} and~\eref{fMcNioisom_e}
and is generically an isomorphism.
Thus, the right-hand side of~\eref{SplitCP_e2} equals to the last integral
in~\eref{SplitCP_e} with the orientation on~$\ov\fM_P'^{\C}(B)$
induced from the complex orientation of~$\cN\wt\io_P^{\C}$
via the analogue of the first isomorphism in~\eref{CompOrient_e0}.
Along with the first statements of \cite[Propositions~3.12/4]{RealGWsGeom},
this implies the first claim in~\ref{RSplit_it}.\\

\noindent
Suppose now that $\wt{P}\!\in\!\wt\cP(g,\ell)$ is as in~\eref{PwtPdfn_e}, 
$$\ga_1\!\in\!H^*(\ov\cM_{g',|I\sqcup J|+1};\Q) \quad\hbox{and}\quad 
\ga_2\!\in\!H^*(\R\ov\cM_{g_0,|K|+1};\Q).$$
The image of $\ov\fM_{\wt{P}}'^{\C}(B)$ under $\wt\io_{\wt{P};0}^{\C}$ is 
again contained  in $\ov\fM_{g-2,\ell+2}'^{\phi;\bu}(B)$.
The composition~$\wt\io_{\wt{P}}^{\C}$ of the restriction of~$\wt\io_{\wt{P};0}^{\C}$ 
to~$\ov\fM_{\wt{P}}'^{\C}(B)$ with~$\wt\io_{g,\ell}'$ 
is the fiber product of the forgetful morphism~$\ff$ in~\eref{Rffdfn_e}
and the immersion~$\io_{\wt{P}}^{\C}$ in~\eref{RiowtPdfn_e}.
The bundle homomorphism 
$$\nd\ff\!: \cN\wt\io_{\wt{P}}^{\C}\!\approx\!
\wt\io_{\wt{P};0}^{\C\,*}\cN\wt\io_{g,\ell}'\big|_{\ov\fM_{\wt{P}}'^{\C}(B)}
\lra  \big\{\ff\!\times\!\ff|_{\ov\fM_{\wt{P}}'^{\C}(B)}\big\}^{\!*}\cN\io_{\wt{P}}^{\C}$$
induced by the differential of~$\ff$, 
with $\ff$ on the right-hand side as in~\eref{ffdfn_e} 
with~$(g,\ell)$ replaced by \hbox{$(g',|I\!\sqcup\!J|\!+\!1)$} and
as in~\eref{Rffdfn_e} with~$(g,\ell)$ replaced by \hbox{$(g_0,|K|\!+\!1)$},
is $\C$-linear with respect to the identifications in~\eref{RioPprp_e} and~\eref{fMcNioisom_e}
and is generically an isomorphism.
By the first statements of \cite[Propositions~3.12/3/4]{RealGWsGeom},
the orientation~on 
$$\ov\fM_{\wt{P}}'^{\C}(B)\!\cap\!
\big(\ov\fM_{g',|I\sqcup J|+1}(B')\!\times\!\ov\fM_{g_0,|K|+1}^{\phi}(B_0)\!\big)
\subset\ov\fM_{\wt{P}}^{\C}(B)$$
induced from the complex orientation of~$\cN\wt\io_{\wt{P}}^{\C}$
via the analogue of the first isomorphism in~\eref{CompOrient_e0} differs from
the orientation induced via the analogue of the first isomorphism in~\eref{SubIsom_e}
by~$-1$ to the power of~$\lr{c_1(L),\phi_*(B')\!}\!+\!|J|$.
Along with~\eref{SplitCP_e} and~\eref{SplitCP_e2} with
\begin{gather*}
\sum_{\begin{subarray}{c}B_1,B_2\in H_2(X;\Z)\\B_1+B_2=B \end{subarray}}, \quad
\R\ov\cM_{g_1,|I|+1}, \quad \R\ov\cM_{g_2,|J|+1}, \quad
\cI_{g_1,|I|+1,B_1}^{\om,\phi}, \quad \cI_{g_2,|J|+1,B_2}^{\om,\phi}, \\
I, \quad J, \quad g_1, \quad \ov\fM_P^{\C}(B), \quad \ov\fM_P'^{\C}(B), \quad
\wt\io_P^{\C}, \quad\hbox{and}\quad \io_P^{\C}
\end{gather*}
replaced~by 
\begin{gather*}
\sum_{(B',B_0)\in\cP_{\wt{P}}^{\phi}(B)}, \quad
\ov\cM_{g',|I\sqcup J|+1}, \quad \R\ov\cM_{g_0,|K|+1}, \quad
\cI_{g,|I\sqcup J|+1,B'}^{\om,\phi,L}, \quad \cI_{g_0,|K|+1,B_0}^{\om,\phi}, \\
I\!\sqcup\!J, \quad K, \quad 2g'\!-\!1, \quad \ov\fM_{\wt{P}}^{\C}(B), \quad 
\ov\fM_{\wt{P}}'^{\C}(B), \quad \wt\io_{\wt{P}}^{\C}, \quad\hbox{and}\quad \io_{\wt{P}}^{\C},
\end{gather*}
respectively, this implies the second claim in~\ref{RSplit_it}.

\begin{rmk}\label{CanonOrient_rmk2}
Suppose $\wt\phi$ is a conjugation on~$L$ lifting~$\phi$ and
the moduli spaces $\ov\fM_{g,\ell}^{\phi}(B)$ are oriented via~\eref{RBPisom_e3}
as in~\cite{RealGWsI}.
By the second statement of \cite[Proposition~3.14]{RealGWsGeom}, 
the orientations on~$\ov\fM_{g-2,\ell+2}'^{\phi;\bu}(B)$
defined via the first isomorphisms in~\eref{SubIsom_e} and~\eref{CompOrient_e0} 
are then opposite.
By the second statement of \cite[Proposition~3.16]{RealGWsGeom}, 
the orientations on~$\ov\fM_{g-1,\ell+1}''^{\phi;\bu}(B)$
defined via the second isomorphisms in~\eref{SubIsom_e} and~\eref{CompOrient_e0} 
are the same if and only~if \hbox{$g\!+\!\lr{c_1(X,\om),B}/2$} is even.
By the second statements of \cite[Propositions~3.12/3]{RealGWsGeom}, 
the sign exponents in~\eref{sgnwtio_e} become
\begin{gather*}
\ve_{P;0}^{\C}=\frac{n\!-\!1}2(g_1\!-\!1)(g_2\!-\!1)
\!+\!\big(g_1\!-\!1\!+\!\lr{c_1(X,\om),B_1}/2\big)\big(g_2\!-\!1\!+\!\lr{c_1(X,\om),B_2}/2\big),\\
\ve_{\wt{P};0}^{\C}
\big|_{\ov\fM_{g',|I\sqcup J|+1}(B')\times\ov\fM_{g_0,|K|+1}^{\phi}(B_0)}=g'\!-\!1\!+\!|J|,\\
\ve_{g,(I,J);0}^{\,\C}\big|_{\ov\fM_{(g-1)/2,\ell+2}(B')}=(g\!-\!1)/2\!+\!|J|,\quad
\ve_{g,(I,J);0}^{\,E}\big|_{\ov\fM_{g/2,\ell+1}(B')}=g/2\!-\!1\!+\!|J|.
\end{gather*}
\end{rmk}

\vspace{.6in}

\noindent
{\it  Institut de Math\'ematiques de Jussieu - Paris Rive Gauche,
Universit\'e Pierre et Marie Curie, 
4~Place Jussieu,
75252 Paris Cedex 5,
France\\
penka.georgieva@imj-prg.fr}\\

\noindent
{\it Department of Mathematics, Stony Brook University, Stony Brook, NY 11794\\
azinger@math.stonybrook.edu}\\

\end{document}